\newtheorem{theorem}{Theorem}
\newtheorem{lemma}{Lemma}
\newtheorem{proposition}{Proposition}
\theoremstyle{definition}
\newtheorem{definition}{Definition}
\theoremstyle{remark}
\newtheorem{remark}{Remark}
\newtheorem{example}{Example}
\numberwithin{equation}{section}
\def\Var{\text{Var}}
\def\Cov{\textnormal{Cov}}
\def\EE{\mathbb{E}}
\def\l{\lambda_1}
\def\b{\beta}
\def\g{\gamma}
\newcommand{\Z}{\mathbb Z}
\newcommand{\R}{\mathbb R}
\begin{document}

\title[On Rates of Decay of Stationary ARCH($\infty$) models]{Long run behaviour of the
autocovariance function of ARCH($\infty$) models}
\author{John A. D. Appleby}
\address{Edgeworth Centre for Financial Mathematics, School of Mathematical Sciences, Dublin City University, Dublin 9,
Ireland}
\email{john.appleby@dcu.ie}
\urladdr{http://webpages.dcu.ie/\textasciitilde applebyj}
\author{John A. Daniels}
\address{Edgeworth Centre for Financial Mathematics, School of Mathematical Sciences, Dublin City University, Dublin 9,
Ireland} \email{john.daniels2@mail.dcu.ie}
\thanks{Both authors are partially funded by the Science Foundation Ireland grant 07/MI/008
``Edgeworth Centre for Financial Mathematics". The second author is
also supported by the Irish Research Council for Science,
Engineering and Technology under the Embark Initiative grant.}
\subjclass{Primary: 39A06, 39A11, 39A50, 39A60, 60G10, 62M10, 62P20}
\keywords{Volterra equation, difference equation, asymptotic
behaviour, subexponential sequences, ARCH process, weak
stationarity, long memory,  autocovariance function}
\date{14 February 2012}

\begin{abstract}
The asymptotic properties of the memory structure of ARCH($\infty$)
equations are investigated. This asymptotic analysis is achieved by
expressing the autocovariance function of ARCH($\infty$) equations
as the solution of a linear Volterra summation equation and
analysing the properties of an associated resolvent equation via the
admissibility theory of linear Volterra operators. It is shown that
the autocovariance function decays subexponentially (or
geometrically) if and only if the kernel of the resolvent equation
has the same decay property. It is also shown that upper
subexponential bounds on the autocovariance function result if and
only if similar bounds apply to the kernel.
\end{abstract}

\maketitle
\section{Introduction}\label{sect:intro}

The significant influence of past data upon current and future
values of a time series is evidenced in many time series from the
physical sciences and finance, e.g. tree-ring data series, wheat
market prices (cf., e.g., Baillie \cite{baillie}) and stock market
and foreign exchange returns (cf., e.g., Ding and Granger
\cite{zdcg:1996}). The influence of past realisations may be defined
in terms of the persistence of the autocorrelations of the series,
with a stationary series whose autocorrelations decay at a
non-summable rate being referred to as a ``long memory" process.
Furthermore, the presence and application of long memory processes
in macroeconomics, asset pricing models and interest rate models is
noted in \cite{baillie} and the references contained therein.
Various properties of fractional Brownian motion are illustrated in
Mandelbrot and Van Ness \cite{bmjvn:1968}: of particular note is
that increments of fractional Brownian motion are stationary,
self--similar and can exhibit long memory.

Kirman and Teyssi\`ere \cite{akgt:2002a, akgt:2002b} give discrete time series models which
 are derived from a market which is composed of fundamental and technical analysts, these models are then shown to
 possess long memory characteristics in the differenced log returns
of price processes associated with these models, while other
features such as bubbles are demonstrated. Appleby and Krol
\cite{appkrol2} analyse the long memory properties of a linear
stochastic Volterra equation in both continuous and discrete time,
with conditions for both subexponential rates of decay and
arbitrarily slow decay rates in the autocovariance function being
characterised in terms of the decay of the kernel of the Volterra
equation. A continuous--time infinite history financial market model
is discussed in Anh et al. \cite{anh, aik}, which is a
generalisation of the classic Black-Scholes model, where
characterisations for long memory are proved. In each of \cite{anh,
aik, appkrol2} the equations studied have additive noise, so the
size of stochastic shocks are independent of the state of the
system.

A widely--employed class of discrete--time stochastic processes in
which the shock size depends on the state are the so--called ARCH
(autoregressive conditional heteroskedastic) processes. ARCH
processes are widely used and studied in financial mathematics to
characterise time varying conditional volatility as well as the
non--trivial autocovariance functions possessed by autoregressive
processes driven by additive noise. In particular, the ARCH
formulation captures well the tendency for clustering of volatility
Engle \cite{engle:1982}. Much of the work on ARCH processes concerns
processes with finite memory: if only the last $q$ values of the
process determine the dynamics, the process is termed an ARCH($q$)
process. A property of these finite--memory processes is that their
autocovariance functions decay exponentially fast in their time lag.
Therefore slow decay or long memory in an ARCH--type process can
only be achieved by considering terms from unboundedly far in the
past. This naturally leads to the study of ARCH($\infty$) processes
and in this work we study the memory properties such processes. A
standard definition given in e.g., \cite{lgpkrl:2000}, for these
processes is:
\begin{definition}  \label{def:arch}
    A random sequence $X=\{X(k), k\in\Z\}$ is said to satisfy ARCH($\infty$) equations
    if there exists a sequence of independent and identically
distributed (i.i.d.) non--negative random variables $\xi=\{\xi(k), k\in
\Z\}$ such that
\begin{gather} \label{eq:1a} \tag{AH}
X(k) = \varsigma(k)\xi(k), \quad \varsigma(k) = a + \sum_{j=1}^{\infty}b(j)X(k-j),
\end{gather}
where $a\geq0$ and $b=\{b(j), j\in\{1,2,...\}\}$ satisfies $b(j)\geq0$, for $j \in \{1,2,...\}$.
\end{definition}
ARCH($\infty$) processes were initially introduced by
Robinson \cite{robin:1991} as an alternative model when testing for serial correlation.
This process is a generalisation of the ``classical" ARCH($\infty$) process
\[
    r(k)=\sigma(k)\epsilon(k), \quad \sigma(k)^2 = \tau + \sum_{j=1}^{\infty}\phi(j) r(k-j)^2,
\]
where $\tau,\phi\geq0$ and $\epsilon$ is an i.i.d. random sequence.
Moreover \eqref{eq:1a} includes models where $r$ and $\sigma$ are
replaced by an arbitrary fractional positive powers of themselves
and the `shocks', $\epsilon$, are taken to be non-negative. The
terminology ARCH($\infty$) is justified, as an ARCH($\infty$)
process is in some sense the limit of an ARCH($q$) process as
$q\to\infty$. It can be seen, moreover that ARCH($\infty$) processes
are generalisations of the finite order ARCH and GARCH processes:
indeed the ARCH($q$) process of \cite{engle:1982}, results when
$\phi(j)=0$ for $j\geq q+1$ and the GARCH($p,q$) process of
Bollerslev~\cite{boll} may be rewritten as an ARCH($\infty$) process
with exponentially decaying weights $b$.

As attested to above, empirical findings indicate the presence of
long memory in financial and economic time series, which has
resulted in research being focused on the long memory properties of
stationary solutions of ARCH-like processes (cf., e.g., Baillie et
al. \cite{rbtbhm:1996}). Of note here are the investigations into
necessary and sufficient conditions for the existence of a weakly
stationary solution of the ARCH($\infty$) process, conducted by
Giraitis, Kokoszka, Leipus, Surgailis, and Zaffaroni
\cite{lgpkrl:2000, lgds:2002, pkrl:2000, Zaffaroni:2004}.
Moreover, these papers extensively study the autocovariance
structure and long memory properties of \eqref{eq:1a}.
Section~\ref{sec:autocov} details some of the results of
\cite{lgpkrl:2000, lgds:2002, Zaffaroni:2004} which are applicable
to the results of this article. Also in Section~\ref{sec:autocov} we highlight in particular the
importance of an underlying resolvent equation in determining the
long term memory characteristics of \eqref{eq:1a}.
Also, a Volterra
series representation of the autocovariance function is established.

The main results of this article appear in Section~\ref{sect:sub} where conditions on the data of
\eqref{eq:1a}, i.e., $a,b,\xi$, are given to describe decay rates in a class
wider than the class of hyperbolically decaying sequences considered heretofore.
Roughly speaking, for the memory, or kernel $b$, lying in a class of
slowing decaying (subexponential) sequences it is shown that the
autocovariance function must decay at precisely the rate of $b$.
Furthermore, we prove for the first time converse results which show
that such exact non--exponential rates of decay of the
autocovariance function result only when $b$ lies in this class.
These results strengthen the hypotheses of \cite[Theorem~2]{Zaffaroni:2004}.

Section~\ref{sect:bounds} describes the effect that upper and lower
slowly decaying bounds on $b$ have on the autocovariance function.
The main result is that a nontrivial subexponential upper bound on
the rate of decay of the autocovariance function is equivalent to a
nontrivial subexponential upper bound on the decay rate of the
kernel $b$. However, a numerical example demonstrates that a
corresponding lower bound on the autocovariance function does not
necessarily come from a corresponding lower bound on $b$, so one
cannot readily characterise necessary and sufficient conditions for
lower bounds on the memory of \eqref{eq:1a}.
Section~\ref{sect:bounds} also gives necessary and sufficient
conditions for exponential decay of the autocovariance function.
This last result complements the sufficient conditions of
\cite[Theorem~3.1]{pkrl:2000} while employing a different method of proof.

One of the chief differences in the analysis of this paper to that
of \cite{lgpkrl:2000, lgds:2002, pkrl:2000} is that rather than
analysing an explicit representation of the solution of
\eqref{eq:1a}, we primarily express the autocovariance function and
its associated resolvent as the solutions of Volterra equations and
then employ admissibility theory of linear Volterra operators to
study the asymptotic behaviour. Such admissibility theory has been
developed and used by Appleby, Gy{\H o}ri, Horv\'ath, Reynolds
\cite{appleby, jaigdr:2006, jaigdr:2011, islh} to determine rates of convergence to  the
equilibrium of linear Volterra summation equations. The proofs of
results stated in Sections~\ref{sect:sub} and~\ref{sect:bounds} are
confined to Section~\ref{sect:proofs}.

In this work, we have concentrated solely on the asymptotic
behaviour of stationary solutions of ARCH($\infty$) equations. It is
our belief that many of the asymptotic results presented here are
robust to mild departures from stationarity. However, an
investigation of this conjecture is deferred to a later work.
\section{Preliminaries}\label{sect:prelim}
Let $\Z$ be the set of integers, $\Z^{+}=\{n\in\Z:n\geq0\}$ and $\R$ the set of real numbers. If $d$ is a
positive integer, $\R^d$ is the space of $d$-dimensional column vectors with real components and
$\R^{d\times d}$ is the space of all $d \times d$ real matrices.
 We employ at various points the standard Landau order notation (cf e.g., \cite[Chapter~8.1]{se:1996}). Let $f$ and
$g$ be two functions defined on $\Z$ or $\R$. Then the notation
$f(n)\sim g(n)$ as $n\to \infty$ means that $\lim_{n\to \infty}
f(n)/g(n) = 1$. Sequences $u=\{u(n)\}_{n\geq 0}$ in $\R^d$ or
$U=\{U(n)\}_{n\geq 0}$ in $\R^{d\times d}$ are sometimes identified
with functions $u:\Z^{+}\to\R^d$ and $U:\Z^{+}\to\R^{d\times d}$. If
$\{U(n)\}_{n\geq0}$ and $\{V(n)\}_{n\geq 0}$ are sequences in
$\R^{d\times d}$, we define the \emph{convolution} of $\{(U\ast
V)(n)\}_{n\geq0}$ by
\[
    (U\ast V)(n)=\sum_{j=0}^n U(n-j)V(j), \quad n\geq 0.
\]
In this paper the \emph{$Z$-transform} of a sequence $U$ in $\R^{d\times d}$ is the function defined by
\[
    \tilde{U}( \lambda)= \sum_{j=0}^\infty U(j) \lambda^{j},
\]
provided $\lambda$ is a complex number for which the series
converges absolutely. A similar definition pertains for sequences
with values in other spaces. We remark that this definition of the
$Z$-transform differs from the more usual definition (see e.g.
\cite[Chapter~6.1]{se:1996}) in that $\lambda$ plays the role of $\lambda^{-1}$
and hence roots and poles of the $Z$-transform which were outside
the unit circle are now inside the unit circle, and vice versa.

For random variables $U$ and $V$ defined on the same probability
space, and which each have finite variance, we denote their means by
$\mathbb{E}[U]$ and $\mathbb{E}[V]$ and their variances by $\Var[U]$
and $\Var[V]$. Their covariance is denoted by $\Cov(U,V)$. A
stochastic process $X=\{X(k):k\in\mathbb{Z}\}$ is said to be
\textit{weakly stationary} if it has constant mean, $\EE[X(k)]\in\R$
for all $k\in\Z$, and there exists a function, called the
\textit{autocovariance function}, $\rho=\{\rho(k),k\in\Z\}$ such
that,
\begin{equation}\label{eq:cov4}
    \rho(k) = \Cov[X(n), X(n+k)], \quad \text{ for all } n,k\in\Z.
\end{equation}
Throughout this work the qualifiers weak and weakly are dropped, and
we refer to such processes as being \textit{stationary} or
possessing the property of \textit{stationarity}. The concept of
stationarity is that a structure is imposed upon the statistical
properties of the process which gives the process a time--invariance
property. The \textit{autocorrelation function} of $X$ is defined by
$\rho(k)/\Var[X(0)]$ for $k\in \mathbb{Z}$, where $\Var[X(0)]$ is
non--trivial.

It is of special interest in this work to establish the rate at
which $\rho(k)\to 0$ as $k\to\infty$ and in particular to
investigate whether the process $X$ possesses long memory. A number
of definitions of long memory exist in the literature: here we adopt
one of the commonest, saying that $X$ has \emph{long memory} if
 the autocovariance function is not summable i.e.,
\begin{equation}\label{eq:cov3}
    \sum_{k=0}^{\infty}|\rho(k)|=+\infty.
\end{equation}
The underpinning idea of long memory is that realisations far in the
past do not fade away quickly and so have a bearing upon the present
and future development of the process. The significance of long
memory as a measure of the efficiency of a financial market is
discussed in e.g. Cont~\cite{Cont}.

\section{Discussion of Existing Results on ARCH($\infty$) Processes}\label{sec:autocov}
Throughout this article we use the notation
\begin{equation*}
    \lambda_1 = \EE[\xi(0)],\quad \lambda_2 = \EE[\xi(0)^2],\quad B = \sum_{j=1}^{\infty}b(j), \quad \sigma^2=\Var[\xi(0)]=\lambda_2-\l^2.
\end{equation*}
It is assumed throughout that both the first moment of $\xi$ is
finite and non--zero, i.e. $0<\lambda_1<\infty$. A zero mean of
$\xi$ results in $X$ reducing to the trivial solution, i.e. $X(k)=0$
a.s. for all $k\in\mathbb{Z}$. Also $\sigma=0$ is equivalent to the
shocks $\xi$ being a.s. constant, and is therefore not of interest.
Equally, the case $a=0$ is not of interest, for it is known in this
case that $X(k)=0$ a.s. for all $k\in\mathbb{Z}$ is the only
stationary solution of \eqref{eq:1a}, see e.g.
\cite[Theorem~2.1]{lgpkrl:2000}.

Furthermore if $b(j)=0$ for all $j\geq1$ then this results in the
degenerate case of a constant conditional volatility of $X$ in
\eqref{eq:1a}, thereby defeating the initial motivation for studying
ARCH processes. In this case, $X$ degenerates to a constant multiple
of the i.i.d. non-negative ``shocks". We thus argue it is reasonable
to assume that there exists at least one value in the sequence $b$
which is positive. For this reason, we have as a standing hypothesis
throughout the paper that
\begin{equation} \label{eq.S0} \tag{$\text{S}_0$}
\lambda_1\in (0,\infty), \quad a>0, \quad \sigma\in (0,\infty),
\quad b\not\equiv 0.
\end{equation}

With the added assumption that
\begin{equation}\label{eq:con1}\tag{\text{S}$_1$}
    \lambda_1B<1,
\end{equation}
it is shown in \cite{lgpkrl:2000} that $\EE[X(k)]=a\l/(1-\l B)<+\infty$ for all $k\in\Z$.

A moving average representation of the solution of \eqref{eq:1a} is
derived in \cite{lgpkrl:2000}. We briefly outline the construction
of this representation and use it to develop a Volterra equation
satisfied by the coefficients of this representation. The results
later in this work concur with \cite[Theorem~2]{Zaffaroni:2004},
namely that these coefficients determine the rate of decay of the
autocovariance function.

Let $\psi(L) = 1 - \l\sum_{j=1}^{\infty}b(j)L^j$, where $L$ is the
\textit{lag} or \textit{backward shift} operator which operates on a
process $Y=\{Y(k):k\in\mathbb{Z}\}$ according to
$L\bigl(Y(k)\bigr)=Y(k-1)$. Define $\nu(k) :=X(k) - \l\varsigma(k)$:
then from \eqref{eq:1a} we have
\begin{equation*}
    \psi(L)X(k) = a\l + \nu(k).
\end{equation*}
A moving average representation for $X$ is then obtained by applying
the operator $\psi^{-1}(L)$ across this equation. The existence of
such an inverse operator (on the closed unit circle in the complex
plane) is given in \cite{lgpkrl:2000} and the references contained
therein. This existence is chiefly guaranteed by the summability of
$b$, a consequence of \eqref{eq:con1} which is assumed throughout
this work. We now state Lemma~4.1 of \cite{lgpkrl:2000}, which is
also~\cite[Problem 8, Chapter 18]{rudin}.
\begin{lemma}\label{lm:inver}
    Suppose $\sum_{j=0}^{\infty}|\psi_j|<\infty$, $\psi(\lambda):=\sum_{j=0}^{\infty}\psi_j \lambda^j$,
    and $|\psi(\lambda)|>0$ for $|\lambda|\leq~1$. Then there exists a sequence $z=\{z(j):j\in\Z^+\}$
    such that $D(\lambda):=1/\psi(\lambda)=\sum_{j=0}^{\infty}z(j)\lambda^j$
    is well defined for all $|\lambda|\leq1$. Furthermore, $\sum_{j=0}^{\infty}|z(j)|<+\infty$.
\end{lemma}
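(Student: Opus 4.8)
The statement is a classical fact about Wiener algebra invertibility, so the plan is to invoke the Wiener $1/f$ theorem applied to the Banach algebra $\ell^1(\Z^+)$ of absolutely summable sequences under convolution. First I would note that the hypothesis $\sum_{j=0}^\infty |\psi_j| < \infty$ says precisely that the sequence $\{\psi_j\}$ lies in this algebra, and its $Z$-transform $\psi(\lambda) = \sum_{j=0}^\infty \psi_j \lambda^j$ is a continuous function on the closed unit disc $\{|\lambda| \le 1\}$, analytic on the interior, with the Wiener norm $\|\psi\|_W = \sum_j |\psi_j|$. Since the spectrum of an element of a commutative Banach algebra equals the range of its Gelfand transform, and here the maximal ideal space of $\ell^1(\Z^+)$ can be identified with the closed unit disc (via evaluation $\{\psi_j\} \mapsto \sum_j \psi_j \lambda^j$ at $|\lambda|\le 1$), the element $\{\psi_j\}$ is invertible in $\ell^1(\Z^+)$ if and only if $\psi(\lambda)\neq 0$ for all $|\lambda|\le 1$. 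This is exactly the stated hypothesis $|\psi(\lambda)|>0$ on $|\lambda|\le1$.

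Carrying this out: invertibility of $\{\psi_j\}$ in $\ell^1(\Z^+)$ produces a sequence $z = \{z(j):j\in\Z^+\}$ with $\sum_{j=0}^\infty |z(j)| < \infty$ and $(\psi \ast z)(n) = \delta_{n,0}$ for all $n\ge 0$. Taking $Z$-transforms of this convolution identity — legitimate because both sequences are absolutely summable so the transforms converge absolutely and multiply correctly on $|\lambda|\le 1$ — gives $\psi(\lambda)\tilde z(\lambda) = 1$, i.e. $\tilde z(\lambda) = 1/\psi(\lambda) =: D(\lambda)$ for all $|\lambda|\le 1$. Thus $D$ has the desired absolutely convergent power series representation $D(\lambda) = \sum_{j=0}^\infty z(j)\lambda^j$ on the closed unit disc, which is what the lemma asserts.

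The only genuine work is justifying the identification of the maximal ideal space of $\ell^1(\Z^+)$ with the closed unit disc and the attendant spectral characterisation; this is the content of the Wiener theorem (equivalently Gelfand's proof of it), and since the lemma is quoted from \cite[Lemma~4.1]{lgpkrl:2000} and \cite[Problem~8, Chapter~18]{rudin}, I would simply cite these references rather than reproving the Banach-algebra machinery. The main obstacle — if one wanted a self-contained argument — would be establishing that every complex homomorphism of $\ell^1(\Z^+)$ is evaluation at some point of the closed disc, which requires showing such a homomorphism is determined by its value $\lambda$ on the generator $\{\delta_{n,1}\}$ and that $|\lambda|\le1$ because the generator has norm $1$; but given the cited sources this can be taken as known.
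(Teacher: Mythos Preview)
The paper does not actually prove this lemma: it is stated without proof and attributed to \cite[Lemma~4.1]{lgpkrl:2000} and \cite[Problem~8, Chapter~18]{rudin}. Your argument via the Wiener $1/f$ theorem and Gelfand theory for the Banach algebra $\ell^1(\Z^+)$ is correct and is precisely the approach indicated by the Rudin reference (Chapter~18 of Rudin is the chapter on Banach algebras), so your proposal is in line with what the cited sources contain.
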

 We state the theorem guaranteeing a moving average representation from \cite[Theorem~4.1]{lgpkrl:2000}.
\begin{theorem}\label{thm:mov}
    If condition \eqref{eq:con1} holds, then there is a solution $X$ of \eqref{eq:1a} which admits the representation
\begin{equation*}
    X(k) = \mathbb{E}[X(k)] + \sum_{j=0}^{\infty}z(j)\nu(k-j)
\end{equation*}
    where $\sum_{j=0}^{\infty}|z(j)|<\infty$ and the process $\nu$ satisfies
    $\mathbb{E}[\nu(k)|\mathcal{F}(k-1)]=0$ for each $k$,
    where $(\mathcal{F}(k))_{k\in\mathbb{Z}}$ is the natural
    filtration generated by $\xi$.
\end{theorem}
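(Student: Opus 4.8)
The plan is to construct the stationary solution $X$ by iterating the defining relation \eqref{eq:1a}, and then to read off the moving average representation by inverting the operator $\psi(L)$ by means of Lemma~\ref{lm:inver}. First I would exhibit a candidate solution: substituting \eqref{eq:1a} into itself repeatedly, one is led to the formal Volterra-type series
\begin{equation*}
X(k) = a\xi(k)\Bigl(1 + \sum_{l=1}^{\infty}\sum_{j_1,\ldots,j_l\geq1} b(j_1)\cdots b(j_l)\,\xi(k-j_1)\xi(k-j_1-j_2)\cdots\xi(k-j_1-\cdots-j_l)\Bigr).
\end{equation*}
Every summand here is non-negative and $\xi$ is i.i.d., so the expectation of the term of order $l$ equals $a\l(\l B)^l$; since \eqref{eq:con1} gives $\l B<1$, the series converges almost surely and in $L^1$, with $\EE[X(k)] = a\l/(1-\l B)<\infty$ independently of $k$. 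One then checks directly, by splitting off the $l=0$ term and re-indexing, that this $X$ solves \eqref{eq:1a}; being a fixed measurable functional of the stationary sequence $\xi$, it is stationary, and in particular $\varsigma(k) = a + \sum_{j\geq1}b(j)X(k-j)$ is a.s.\ finite, belongs to $L^1$, and is $\F(k-1)$-measurable.

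Next I would establish the martingale-difference property of $\nu$. Writing $\nu(k) = X(k) - \l\varsigma(k) = \varsigma(k)\bigl(\xi(k)-\l\bigr)$, with $\varsigma(k)$ being $\F(k-1)$-measurable and $\xi(k)$ independent of $\F(k-1)$, conditioning yields $\EE[\nu(k)\mid\F(k-1)] = \varsigma(k)\bigl(\EE[\xi(k)]-\l\bigr) = 0$; the same factorisation together with $\EE[\varsigma(k)]<\infty$ gives $\sup_k\EE|\nu(k)|<\infty$.

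Finally I would invert $\psi(L)$. As already derived in the text, \eqref{eq:1a} gives $\psi(L)X(k) = a\l + \nu(k)$, and under \eqref{eq:con1} we have $|\psi(\lambda)| \geq 1-\l B>0$ for $|\lambda|\leq1$, so Lemma~\ref{lm:inver} furnishes an absolutely summable sequence $z$ with $\sum_{j\geq0}z(j)\lambda^j = 1/\psi(\lambda)$ for $|\lambda|\leq1$; evaluating at $\lambda=1$ gives $\sum_{j\geq0}z(j) = 1/(1-\l B)$. Applying $D(L) = \sum_{j\geq0}z(j)L^j$ to both sides of $\psi(L)X(k) = a\l + \nu(k)$ and using that $D(L)\psi(L)$ acts as the identity, I would obtain
\begin{equation*}
X(k) = a\l\sum_{j\geq0}z(j) + \sum_{j\geq0}z(j)\nu(k-j) = \frac{a\l}{1-\l B} + \sum_{j\geq0}z(j)\nu(k-j),
\end{equation*}
which, since $a\l/(1-\l B) = \EE[X(k)]$, is the asserted representation; the series converges absolutely in $L^1$ and almost surely because $\sum_j|z(j)|<\infty$ and $\sup_k\EE|\nu(k)|<\infty$.

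The hard part will be the operational-calculus step: justifying that applying $D(L)$ to $\psi(L)X(k)$ genuinely returns $X(k)$. This amounts to interchanging the two absolutely convergent convolutions (against $z$ and against $b$) applied to the random sequence $X$, which I expect to handle via Fubini's theorem together with $\sum_j|z(j)|<\infty$, $B = \sum_j b(j)<\infty$, and the uniform $L^1$ bound on $X$ --- all consequences of \eqref{eq:con1}. A secondary technical point is confirming that the iterated series above is an actual, rather than merely formal, solution of \eqref{eq:1a}.
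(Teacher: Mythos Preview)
The paper does not actually prove Theorem~\ref{thm:mov}: it is quoted verbatim as \cite[Theorem~4.1]{lgpkrl:2000} and left without proof, serving only as background for the autocovariance analysis that follows. So there is no ``paper's own proof'' to compare against.

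That said, your proposal is precisely the argument one finds in Giraitis, Kokoszka and Leipus. The Volterra--series construction of $X$, the $L^1$ bound $\EE[X(k)]=a\l\sum_{l\geq 0}(\l B)^l$, the martingale--difference identity $\nu(k)=\varsigma(k)(\xi(k)-\l)$, and the inversion of $\psi(L)$ via Lemma~\ref{lm:inver} are exactly the ingredients of that reference. Your identification of the only genuinely delicate point --- justifying $D(L)\psi(L)X=X$ via Fubini using the absolute summability of $z$ and $b$ together with the uniform $L^1$ bound on $X$ --- is accurate, and the justification you sketch is the right one. The secondary point you flag (checking that the iterated series really solves \eqref{eq:1a}) is handled by peeling off the first factor $a\xi(k)b(j_1)$ and re-indexing, exactly as you say. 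Nothing is missing.
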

Moreover, in~\cite{lgpkrl:2000} it is shown that with the additional
assumption
\begin{equation}\label{eq:con2} 
    \lambda_{2}^{\frac{1}{2}}\sum_{j=1}^{\infty}b(j)<1,
\end{equation}
then \eqref{eq:1a} has a unique \emph{weakly} stationary solution,
and hence $\mathbb{E}[\nu(k)^2]<+\infty$.

In both \cite{lgds:2002} and \cite{Zaffaroni:2004} necessary and
sufficient conditions are derived for the existence of a weakly
stationary solution of \eqref{eq:1a}. For completeness we state next
a slightly reformulated variant of part of \cite[Theorem
3.1]{lgds:2002}, omitting those parts that are not relevant to our
investigation.
\begin{theorem}\label{thm:lgds3}
The following are equivalent
\begin{itemize}
  \item[(a)] \eqref{eq:con1} holds and
    \begin{equation}\label{eq:nsstat}\tag{\text{S}$_2$}
        \Omega:=\frac{\sigma}{\l}\left(\sum_{j=1}^{\infty}z(j)^2 \right)^{1/2} <1
    \end{equation}
    where $z$ is (well) defined by
    \[
 \frac{1}{1-\lambda_1\sum_{j=1}^{\infty}b(j)\lambda^j}=\sum_{j=0}^{\infty}z(j)\lambda^j,
 \quad |\lambda|\leq 1;
 \]
\item[(b)]
   A weakly stationary solution $X$ of \eqref{eq:1a} exists.
    \end{itemize}
    Both imply that there exists a unique, ergodic solution of
\eqref{eq:1a} which may be written as a convergent orthogonal
Volterra series.     Moreover, $\Cov[X(0),X(k)]\geq0$ and
    \begin{equation}\label{eq:covres}
       \Cov[X(0),X(k)]=\left(\frac{a\sigma}{1-\l B}\right)^2\frac{1}{1-\Omega^2} \, \chi_z(k), \quad \text{for} \quad k\in\Z,
    \end{equation}
    where
\begin{equation} \label{def.chiz}
\chi_z(k)=\sum_{j=0}^{\infty}z(j)z(j+|k|).
\end{equation}
\end{theorem}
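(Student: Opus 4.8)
The plan is to establish the two implications separately, to read off the covariance identity \eqref{eq:covres} from the very computation that yields \eqref{eq:nsstat}, and to obtain ergodicity, uniqueness and the sign statement as by-products of a Volterra-series representation. I would begin by recording three facts. Writing $\nu(k)=X(k)-\l\varsigma(k)=\varsigma(k)\bigl(\xi(k)-\l\bigr)$ and noting that $\varsigma(k)$ depends only on $\{\xi(j):j\le k-1\}$, the factor $\varsigma(k)$ is $\F(k-1)$-measurable and independent of $\xi(k)$; hence $\EE[\nu(k)\mid\F(k-1)]=0$, so $\nu$ is a martingale difference sequence and, once square-integrable, an orthogonal sequence with $\EE[\nu(k)^2]=\sigma^2\EE[\varsigma(k)^2]$. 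Next, expanding $\bigl(1-\l\sum_{j\ge1}b(j)\lambda^j\bigr)^{-1}=\sum_{n\ge0}\bigl(\l\sum_{j\ge1}b(j)\lambda^j\bigr)^n$ and using $\l b(j)\ge0$ shows $z(j)\ge0$ for every $j$ (with $z(0)=1$), so $\chi_z(k)=\sum_{j\ge0}z(j)z(j+|k|)\ge0$. Finally $\varsigma(k)\ge a>0$, and under \eqref{eq:con1} Lemma~\ref{lm:inver} gives $\sum_{j\ge0}|z(j)|<\infty$, since $|1-\l\sum_j b(j)\lambda^j|\ge 1-\l B>0$ on $|\lambda|\le1$.

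For (b) $\Rightarrow$ (a): taking expectations in \eqref{eq:1a} and using the independence above together with Tonelli's theorem (everything is nonnegative) gives $\EE[X(0)]=\l\bigl(a+B\,\EE[X(0)]\bigr)$, i.e. $(1-\l B)\,\EE[X(0)]=a\l>0$; since $0\le\EE[X(0)]<\infty$ this forces $\l B<1$, which is \eqref{eq:con1}, and then $\EE[X(0)]=m:=a\l/(1-\l B)$. With \eqref{eq:con1} in force, Theorem~\ref{thm:mov} supplies $X(k)=m+\sum_{j\ge0}z(j)\nu(k-j)$. Since $\sigma,\l\in(0,\infty)$ give $\lambda_2=\sigma^2+\l^2<\infty$, and $X(0)^2=\varsigma(0)^2\xi(0)^2$ with $\varsigma(0)$ independent of $\xi(0)$, the hypothesis $\Var[X(0)]<\infty$ yields $\EE[\varsigma(0)^2]=\lambda_2^{-1}\bigl(\Var[X(0)]+m^2\bigr)<\infty$ and hence $s^2:=\EE[\nu(0)^2]=\sigma^2\EE[\varsigma(0)^2]\in(0,\infty)$. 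Rearranging the representation using $z(0)=1$ gives $\l\varsigma(k)-m=\sum_{i\ge1}z(i)\nu(k-i)$; taking variances of both sides, using orthogonality and stationarity of $\nu$ and $\EE[\varsigma(0)]=m/\l$, I would obtain $\l^2\EE[\varsigma(0)^2]-m^2=s^2\sum_{i\ge1}z(i)^2$. Substituting $s^2=\sigma^2\EE[\varsigma(0)^2]$ and recalling $\Omega^2=(\sigma^2/\l^2)\sum_{i\ge1}z(i)^2$ turns this into $\l^2(1-\Omega^2)\,\EE[\varsigma(0)^2]=m^2>0$, and since $\EE[\varsigma(0)^2]\ge a^2>0$ this forces $\Omega<1$, which is \eqref{eq:nsstat}.

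For (a) $\Rightarrow$ (b) together with \eqref{eq:covres}: assuming \eqref{eq:con1} and \eqref{eq:nsstat}, I would construct $X(k)$ as the a.s. increasing limit of the nonnegative partial sums obtained by iterating \eqref{eq:1a},
\[
S_n(k)=a\sum_{r=0}^{n}\sum_{j_1,\dots,j_r\ge1} b(j_1)\cdots b(j_r)\,\xi(k)\,\xi(k-j_1)\cdots\xi(k-j_1-\cdots-j_r),
\]
with $\EE[S_n(k)]\uparrow m$ because $\l B<1$. The crucial step (discussed below) is that $\Omega<1$ forces $\sup_n\EE[S_n(k)^2]<\infty$, so $S_n(k)\to X(k)$ in $L^2$ and $X\in L^2$; then Theorem~\ref{thm:mov} gives the moving-average representation, $\EE[\varsigma(0)^2]=\EE[X(0)^2]/\lambda_2<\infty$, and running the variance identity of the previous paragraph in reverse yields $\EE[\varsigma(0)^2]=m^2/\bigl(\l^2(1-\Omega^2)\bigr)$, hence
\[
s^2=\sigma^2\,\EE[\varsigma(0)^2]=\Bigl(\frac{a\sigma}{1-\l B}\Bigr)^2\frac{1}{1-\Omega^2}.
\]
For $k\ge0$, orthogonality of $\nu$ and $\EE[\nu(i)^2]=s^2$ give $\Cov[X(0),X(k)]=\sum_{i\ge0}\sum_{j\ge0}z(i)z(j)\,\EE[\nu(-i)\nu(k-j)]=s^2\sum_{i\ge0}z(i)z(i+k)=s^2\,\chi_z(k)$, which is \eqref{eq:covres}; since $s^2\ge0$ and $z(j)\ge0$, this is nonnegative. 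Finally, the series for $X(k)$ realises $X$ as a measurable functional of $(\xi(j))_{j\le k}$, i.e. a one-sided Bernoulli shift, so $X$ is ergodic, and iterating \eqref{eq:1a} shows the $L^2$-distance between any two stationary solutions is bounded by $C\,\Omega^n$ for every $n$, giving uniqueness since $\Omega<1$.

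The main obstacle is the uniform bound $\sup_n\EE[S_n(k)^2]<\infty$ under $\Omega<1$. Squaring $S_n(k)$ produces a double sum over pairs of strictly decreasing chains of time indices rooted at $k$, and the expectation of a generic summand is a product of factors $\l$ (for a time used by a single chain) and $\lambda_2$ (for a time shared by both chains); the work is to reorganise this sum so that it collapses to a constant multiple of $\sum_{n\ge0}\Omega^{2n}$, thereby showing that $\Omega<1$ is \emph{exactly} the threshold for square-integrability. This combinatorial computation is the substance of \cite{lgds:2002, Zaffaroni:2004}; the present paper takes Theorem~\ref{thm:lgds3} as given and invokes the resolvent/admissibility machinery only afterwards, to extract the decay rate of $\chi_z$, and hence of the autocovariance function, from that of $b$.
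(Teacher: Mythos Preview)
The paper does not prove this theorem at all: it is stated as a reformulation of \cite[Theorem~3.1]{lgds:2002} and explicitly cited as such, with the remark that the underlying Volterra-series representation ``does not form part of our analysis''. You already recognise this in your final paragraph, so there is nothing to compare against in the paper itself.

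Your sketch is, in outline, the right one and matches the structure of the argument in \cite{lgds:2002}. The direction (b)$\Rightarrow$(a) is essentially complete: deriving $\l B<1$ from the mean equation is standard, and your variance identity $\l^2(1-\Omega^2)\,\EE[\varsigma(0)^2]=m^2$ is exactly the computation that forces $\Omega<1$ once $\EE[\varsigma(0)^2]\in(0,\infty)$. One small point: when you invoke Theorem~\ref{thm:mov} and then take variances term by term, you are implicitly assuming the moving-average series converges in $L^2$; this is justified a posteriori because $\sum_j z(j)^2<\infty$ (since $z\in\ell^1$) and $\EE[\nu(0)^2]<\infty$, but it is worth making that explicit. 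The covariance computation and the nonnegativity from $z(j)\ge 0$ are correct.

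For (a)$\Rightarrow$(b), you have correctly isolated the only genuine difficulty: the uniform $L^2$ bound on the iterates $S_n(k)$ under $\Omega<1$. This is indeed the content of the ``diagram'' or chain-counting argument in \cite{lgds:2002} (and the related computation in \cite{Zaffaroni:2004}), where the double sum over pairs of chains is reorganised according to the set of shared time indices, producing a geometric series in $\Omega^2$. Without reproducing that computation your proof is a sketch, not a proof; but you have identified precisely what has to be done and why $\Omega<1$ is the sharp threshold. Your ergodicity and uniqueness remarks are also along the right lines, though the uniqueness contraction bound $C\,\Omega^n$ would again need the same combinatorics to make precise.
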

While the explicit representation of $X$ as a convergent orthogonal
Volterra series is a key component in the proof of Theorem~\ref{thm:lgds3},
in order to keep this article concise we do not
state this explicit form in the above as it does not form part of
our analysis. We further comment that, as observed in
\cite{lgds:2002}, the condition \eqref{eq:nsstat} is weaker than
\eqref{eq:con2}, which is imposed in \cite{lgpkrl:2000}. Under
\eqref{eq:nsstat}, $X$ is weakly stationary and the autocovariance
function is a multiple of
 $\chi_z$ and hence is absolutely summable, thus ruling out
long memory. Moreover as $b\geq0$ by hypothesis, this gives, via
\eqref{eq:deltadiff2}, that $z~\geq0$ and hence, under the condition
\eqref{eq:nsstat}, Theorem~\ref{thm:lgds3} gives
$\Cov[X(n),X(n+k)]~\geq~0$. This observation concurs with that of
\cite{lgpkrl:2000} for the non-negativity of the autocovariance
function under \eqref{eq:con2}.

Under the conditions of Theorem~\ref{thm:lgds3}, the moving average
representation of Theorem~\ref{thm:mov} and \eqref{eq:covres} imply
that
\[
    \mathbb{E}[\nu(0)^2]=\left(\frac{a\sigma}{1-\l
    B}\right)^2\frac{1}{1-\Omega^2},
\]
and also that
\begin{equation}\label{eq:archvar}
\Var[X(0)]=\left(\frac{a\sigma}{1-\l
    B}\right)^2\frac{1}{1-\Omega^2}\sum_{j=0}^\infty z(j)^2
    =\left(\frac{a\sigma}{1-\l
    B}\right)^2\frac{1+ \lambda_1^2\Omega^2/\sigma^2}{1-\Omega^2}.
\end{equation}
The first result of this paper is the calculation of a Yule-Walker
style of representation for the autocovariance of \eqref{eq:1a}.
\begin{proposition}\label{prop:1}
Let \eqref{eq:con1} and \eqref{eq:nsstat} hold. Then $\rho$, as defined by \eqref{eq:cov4}, obeys
\begin {equation}\label{eq:cov}
        \rho(k) =
        \begin{cases}
            \l\sum_{j=-\infty}^{k-1}b(k-j)\rho(j), & \text{ if } k\in\{1,2,3,...\},\\
            \rho(0), & \text{ if } k=0,\\
            \rho(-k), & \text{ if } k\in\{-1,-2,-3,...\},
        \end{cases}
\end{equation}
where $\rho(0)$ is given by \eqref{eq:archvar}.
\end{proposition}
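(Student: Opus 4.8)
The plan is to derive the Yule–Walker recursion directly from the defining ARCH($\infty$) equations \eqref{eq:1a} by conditioning on the past. Fix $n\in\Z$ and $k\geq 1$. Since $X(n+k)=\varsigma(n+k)\xi(n+k)$ with $\varsigma(n+k)=a+\sum_{j\geq 1}b(j)X(n+k-j)$, and $\xi(n+k)$ is independent of $\mathcal{F}(n+k-1)$ (hence independent of $X(n)$ and of every $X(n+k-j)$ with $j\geq 1$), I would first compute $\EE[X(n+k)]=\l\,\EE[\varsigma(n+k)]$ and, more importantly, $\EE[X(n)X(n+k)]=\l\,\EE[X(n)\varsigma(n+k)]$, using that $X(n)$ is $\mathcal{F}(n+k-1)$–measurable when $k\ge 1$. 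Expanding $\varsigma(n+k)$ then gives
\[
\EE[X(n)X(n+k)] = \l\Big(a\,\EE[X(n)] + \sum_{j=1}^{\infty} b(j)\,\EE[X(n)X(n+k-j)]\Big).
\]
Subtracting $\EE[X(n)]\EE[X(n+k)] = \l\big(a\,\EE[X(n)] + \sum_{j\ge 1}b(j)\EE[X(n)]\EE[X(n+k-j)]\big)$ — which uses the same conditioning identity for the mean — converts this into the covariance identity
\[
\Cov[X(n),X(n+k)] = \l\sum_{j=1}^{\infty} b(j)\,\Cov[X(n),X(n+k-j)], \qquad k\geq 1.
\]
By stationarity (Theorem~\ref{thm:lgds3}, whose hypotheses \eqref{eq:con1} and \eqref{eq:nsstat} are assumed), $\Cov[X(n),X(n+k-j)]=\rho(k-j)$ depends only on the lag, and reindexing the sum via $i=k-j$ (so $i$ runs over $\{\ldots,k-2,k-1\}$) yields $\rho(k)=\l\sum_{i=-\infty}^{k-1} b(k-i)\rho(i)$, which is the first branch of \eqref{eq:cov}. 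The second branch $\rho(0)=\Var[X(0)]$ is the definition of the autocovariance at lag $0$, and its explicit value is exactly \eqref{eq:archvar}, already established; the third branch $\rho(-k)=\rho(k)$ is the standard symmetry of the autocovariance function of a stationary process, immediate from \eqref{eq:cov4}.

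The main obstacle is justifying the manipulations rigorously: the interchange of expectation and the infinite sum $\sum_j b(j)X(n+k-j)$, and the validity of the conditioning identity $\EE[X(n+k)\mid\mathcal{F}(n+k-1)] = \l\varsigma(n+k)$ together with the independence of $\xi(n+k)$ from the relevant $\sigma$-algebra. For the interchange, I would appeal to nonnegativity of $b$ and of $X$ (so Tonelli applies termwise to $\EE[X(n)\varsigma(n+k)]$, all terms being nonnegative), combined with the fact that under \eqref{eq:con1}–\eqref{eq:nsstat} the solution has finite second moments and $\EE[\varsigma(n+k)^2]<\infty$ (which follows from the moving-average representation of Theorem~\ref{thm:mov} and \eqref{eq:archvar}), so that $\EE[X(n)\varsigma(n+k)]<\infty$ and dominated/monotone convergence legitimises passing the limit through. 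The conditioning identity is essentially the content of $\EE[\nu(k)\mid\mathcal{F}(k-1)]=0$ from Theorem~\ref{thm:mov}, since $\nu(k)=X(k)-\l\varsigma(k)$; I would simply invoke that. Once these measure-theoretic points are dispatched, the computation is the short algebra sketched above.
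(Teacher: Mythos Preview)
Your argument is correct and essentially identical to the paper's: both expand the later--time variable via the ARCH equation, use independence of the current shock $\xi$ from the past to factor the expectation, and then invoke stationarity and the symmetry $\rho(k)=\rho(-k)$; the paper works directly with $\Cov[X(n),X(n-k)]$ rather than computing second moments and subtracting means, but this is cosmetic. The paper handles the interchange of expectation and infinite sum exactly as you suggest, by an appeal to monotone convergence (nonnegativity of all terms).
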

The proof of Proposition~\ref{prop:1}, in common with many of the main results of the paper, is postponed to the end.

Proposition~\ref{prop:1} shows that the autocovariance obeys a
Volterra summation equation with infinite delay. Since the chief
focus of this paper is to describe the asymptotic behaviour of
$\rho$, it is interesting to draw a distinction between the
potential asymptotic behaviour of $\rho$ and the asymptotic
behaviour of the autocovariance function of an equation with a
\emph{finite} number of lags. To this end consider an ARCH($q$)
rather than an ARCH($\infty$) process. Then the resulting
autocorrelation function, as described by e.g.,
Taylor~\cite[pp.77,95]{st:1986}, corresponds exactly to the
autocorrelation function of the AR($q$) process
\[
    W(k) = \sum_{j=1}^{q}\l b(j)W(k-j) + e(k), \quad k\in\mathbb{Z},
\]
where $e=\{e(k)\}_{k\in\Z}$ is an uncorrelated sequence of random variables with finite constant variance.
 Hence \eqref{eq:cov} reduces to the Yule--Walker equations:
\begin{align*}
    \rho(k) = \l\sum_{j=1}^{q}b(j)\rho(k-j), \quad k\in\{1,2,...\}.
\end{align*}
Thus, the autocovariance function satisfies a $q^{th}$--order linear
difference equation with constant coefficients. It is well--known
that if the ARCH process is to be weakly stationary, all solutions of
an auxiliary polynomial equation must lie inside the unit disc in
$\mathbb{C}$, and that this condition also forces the autocovariance
function to decay geometrically. Hence, for a finite history
equation with a stationary solution, the autocovariance function
must decay geometrically: polynomial decay is impossible.

Thus, the study of the autocovariance function of AR or ARCH models is bound--up with that of difference equations. It is then natural to ask what the asymptotic features of the solutions of unbounded equations of the form
\[
    y(k) = \sum_{j=0}^{k-1}u(k-i)y(i), \quad k\geq1,
\]
are for some $u:\Z\to\R$ and initial condition $y(0)$ and whether
such an equation could be regarded as an underlying equation for the
autocovariance function of some stationary times series. To the
former question: it is well known that the dynamics of this equation
allow both exponential and slower--than--exponential decay (see
e.g.,~\cite{mura:2009} for convergence rates in weighted $l^1$
spaces, \cite{jaigdr:2006} for exact rates in $l^\infty$ spaces, and
\cite{ElMur:1996a} for the characterisation of exponential decay).
As to the latter: while for a stationary time series this is an open question nevertheless for a non-stationary times series such an equation could describe a family of autocovariances indexed by an initial starting time $m\in\Z$ i.e. $k\mapsto \Cov[X(m),X(k)]=y_{m}(k)$.

The distinction between this work and \cite{lgpkrl:2000, lgds:2002,
pkrl:2000,Zaffaroni:2004} is that we exploit the fact that $z$ from
Lemma~\ref{lm:inver} and Theorem~\ref{thm:lgds3} may be written as
the solution of a Volterra summation equation.
\begin{lemma}\label{prop:ztran}
    Suppose, for any $R>0$, $\lambda_1\sum_{j=1}^{\infty}b(j)R^{j}<+\infty$ and
     $\psi(\lambda)=1-\lambda_1\sum_{j=1}^{\infty}b(j)\lambda^j$ for $|\lambda|\leq R$.
    Then the following are equivalent:
    \begin{enumerate}
        \item $D(\lambda):=1/\psi(\lambda)=\sum_{j=0}^{\infty}z(j)\lambda^j$ is well defined for $|\lambda|\leq R$, $\sum_{j=0}^{\infty}z(j) R^{j}<\infty$
     and
\begin{equation}\label{eq:deltadiff2}
    z(n) =  \l\sum_{j=0}^{n-1}b(n-j)z(j),    \quad n=1,2,...; \quad
    z(0)=1;
\end{equation}
        \item $\l\sum_{j=1}^{\infty}b(j)R^{j}<1$.
    \end{enumerate}
\end{lemma}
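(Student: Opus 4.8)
The plan is to move freely between the recursion \eqref{eq:deltadiff2} and the functional identity $\tilde z(\lambda)\psi(\lambda)=1$, exploiting throughout that $b\geq 0$ and $\lambda_1>0$ force $z(j)\geq 0$ for all $j$. Extend $b$ by $b(0):=0$ and write $\tilde b(\lambda)=\sum_{j=0}^{\infty}b(j)\lambda^{j}=\sum_{j=1}^{\infty}b(j)\lambda^{j}$, which is entire by hypothesis; then $\psi(\lambda)=1-\lambda_1\tilde b(\lambda)$. Set $q:=\lambda_1\tilde b(R)=\lambda_1\sum_{j=1}^{\infty}b(j)R^{j}\in[0,\infty)$, so that $\psi(R)=1-q$ and condition (ii) reads exactly $q<1$.

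\emph{(ii)$\Rightarrow$(i).} I would define $z$ by \eqref{eq:deltadiff2}; an immediate induction gives $z(j)\geq 0$. The crux is an a priori bound on the partial sums $S_N:=\sum_{n=0}^{N}z(n)R^{n}$. Writing $z(n)R^{n}=\sum_{j=0}^{n-1}\bigl(\lambda_1 b(n-j)R^{n-j}\bigr)\bigl(z(j)R^{j}\bigr)$ for $n\geq 1$, summing over $1\leq n\leq N$, and interchanging the two finite non-negative sums gives
\[
S_N-1=\sum_{n=1}^{N}z(n)R^{n}=\sum_{j=0}^{N-1}z(j)R^{j}\Bigl(\lambda_1\sum_{m=1}^{N-j}b(m)R^{m}\Bigr)\leq q\,S_N ,
\]
hence $S_N\leq(1-q)^{-1}$ for every $N$. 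Since the terms are non-negative this yields $\tilde z(R)=\sum_{j\geq 0}z(j)R^{j}\leq(1-q)^{-1}<\infty$, so $\tilde z(\lambda)=\sum_{j\geq 0}z(j)\lambda^{j}$ converges absolutely on $\{|\lambda|\leq R\}$. On that disc both $\tilde z$ and $\tilde b$ converge absolutely, so one may form the Cauchy product $\tilde z(\lambda)\psi(\lambda)=\tilde z(\lambda)-\lambda_1\tilde z(\lambda)\tilde b(\lambda)$; using $b(0)=0$ together with \eqref{eq:deltadiff2}, the coefficient of $\lambda^{n}$ equals $z(0)=1$ for $n=0$ and $z(n)-\lambda_1\sum_{j=0}^{n-1}b(n-j)z(j)=0$ for $n\geq 1$. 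Thus $\tilde z(\lambda)\psi(\lambda)=1$ for $|\lambda|\leq R$; in particular $\psi$ is non-vanishing there, $D(\lambda)=1/\psi(\lambda)$ is well defined, and $D(\lambda)=\sum_{j\geq 0}z(j)\lambda^{j}$, which establishes (i).

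\emph{(i)$\Rightarrow$(ii).} By hypothesis $\psi(\lambda)\neq 0$ for $|\lambda|\leq R$ and $1/\psi(\lambda)=\sum_{j\geq 0}z(j)\lambda^{j}$ there; evaluating at $\lambda=R$ gives $\tilde z(R)=1/\psi(R)$. The recursion again forces $z(j)\geq 0$, so $\tilde z(R)\geq z(0)=1>0$, whence $\psi(R)=1/\tilde z(R)>0$. Since $\psi(R)=1-q$, this says $q<1$, i.e. (ii).

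The only delicate point I anticipate is the a priori summability bound $\sum_{j}z(j)R^{j}\leq(1-q)^{-1}$ in the first implication: it has to be obtained directly from the recursion by the interchange-of-summation argument above, \emph{before} the identification $D=1/\psi$ is available, because the tempting geometric expansion $1/(1-\lambda_1\tilde b(\lambda))=\sum_{k\geq 0}(\lambda_1\tilde b(\lambda))^{k}$ is only legitimate once $q<1$ is already known. Everything else is routine manipulation of absolutely convergent power series.
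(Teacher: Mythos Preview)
Your proof is correct. The main departure from the paper's argument is in the direction (ii)$\Rightarrow$(i). The paper rescales by setting $\Lambda=\lambda/R$, checks that the rescaled series $\psi^*(\Lambda)=1-\lambda_1\sum_{j\geq1}b(j)R^j\Lambda^j$ is bounded away from zero on the closed unit disc, and then invokes Lemma~\ref{lm:inver} (a cited result from Rudin/\cite{lgpkrl:2000}) to produce the summable inverse series $z^\ast$, which it identifies with $R^{\,n}z(n)$ by uniqueness of solutions to \eqref{eq:deltadiff2}. You instead define $z$ directly by the recursion and obtain the a priori bound $S_N\leq(1-q)^{-1}$ by summing \eqref{eq:deltadiff2} over finite ranges; this is self-contained and avoids the appeal to Lemma~\ref{lm:inver} altogether. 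The trade-off is that the paper's route makes the non-vanishing of $\psi$ on the whole disc explicit first and then reads off summability, whereas your route gets summability first and deduces non-vanishing from the Cauchy-product identity $\tilde z\psi=1$.

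For (i)$\Rightarrow$(ii) the two arguments are essentially the same: the paper multiplies \eqref{eq:deltadiff2} by $R^n$, sums, and rearranges to $\lambda_1\sum_j b(j)R^j=1-1/\sum_n z(n)R^n\in[0,1)$; you evaluate the functional identity at $\lambda=R$ to get $\tilde z(R)=1/\psi(R)$ and use $\tilde z(R)\geq z(0)=1$. Both rely on the non-negativity of $z$, which follows from $b\geq0$, $\lambda_1>0$ and induction on the recursion.
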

\begin{remark}
    We remark that in the case $R=1$ much of the above lemma is covered in Lemma~\ref{lm:inver}. We note however that in Lemma~\ref{prop:ztran} the necessity of the condition $\l\sum_{j=1}^{\infty}b(j)R^{j}<1$ for the summability of $z$ is drawn out.
\end{remark}
\begin{remark}
    It is elementary, using \eqref{eq:deltadiff2}, to show that \eqref{eq:covres} is a solution of \eqref{eq:cov}.
\end{remark}
We observe that $z$ may be thought of as a resolvent for
\eqref{eq:cov} where the summation term is broken into a sum up to
time $k-1$ and the remainder of the sum thought of as a perturbation
term, i.e.
\begin{equation}\label{eq:cov2}
    \rho(k)= \l\sum_{j=0}^{k-1}b(k-j)\rho(j) + f(k-1), \quad k\geq1,
\end{equation}
where $f(k)=\l\sum_{j=1}^{\infty}b(k+j+1)\rho(-j)$ and hence one has the variation of parameters formula
\begin{equation}\label{eq:varpar}
    \rho(k) = z(k)\rho(0) + \sum_{j=0}^{k-1}z(k-j-1)f(j), \quad k\geq1,
\end{equation}
(see e.g.,~\cite{se:1996}). We demonstrate the usefulness of this
formulation of the autocovariance function in the proof of
Theorem~\ref{thm:22}. As this paper primarily uses properties of
Volterra equations to derive its results, it is perhaps more
intuitive to regard $z$ as the solution of an associated resolvent
equation rather than the coefficients of a power series or moving
average representation as in \cite{lgpkrl:2000, lgds:2002,
Zaffaroni:2004}.
\begin{remark} \label{rem:oldstatcond}
Using \eqref{eq:deltadiff2} and \eqref{eq:con2}, we can show that
\eqref{eq:nsstat} holds. Recalling that \eqref{eq:con2} implies
\eqref{eq:con1}, we can thus independently verify  the sufficiency
of \eqref{eq:con2} for the weak stationarity of the solution of
\eqref{eq:1a} as shown in~\cite[Theorem~2.1]{lgpkrl:2000}.
\end{remark}
\begin{proof}[Proof of Remark~\ref{rem:oldstatcond}]
Applying the Cauchy--Schwartz inequality to the righthand side of \eqref{eq:deltadiff2} yields
    \begin{align*}
        z(n)^2 \leq \l^2 B\sum_{j=0}^{n-1}b(n-j)z(j)^2, \quad n\geq1.
    \end{align*}
    By summing both sides of this equation, and using the fact that  \eqref{eq:con2} implies that $z^2$ is summable,
    we obtain
    \[
      1+ \sum_{n=1}^\infty z(n)^2 \leq 1+\l^2 B\sum_{n=1}^\infty \sum_{j=0}^{n-1}b(n-j)z(j)^2
      = 1+\l^2 B^2 \sum_{j=0}^\infty z(j)^2.
    \]
    Since $z(0)=1$, we obtain $\sum_{j=1}^\infty z^2(j)\leq 1/(1-\lambda_1^2B^2)-1$. Using this bound and  \eqref{eq:con2} leads to
    \eqref{eq:nsstat}.
\end{proof}

\begin{remark} \label{rem:newstatcond}
We can use the fact that $z$ satisfies \eqref{eq:deltadiff2} to
obtain a condition on $b$ which implies the stationarity of $X$ and
which is sometimes weaker than the condition \eqref{eq:con2}. More precisely, we show that
\begin{equation} \label{eq:con3}
\lambda_2<\lambda_1^2+\frac{(1-\lambda_1 B)^2}{\sum_{j=1}^\infty b(j)^2}
\end{equation}
implies \eqref{eq:nsstat}, and that \eqref{eq:con2} implies \eqref{eq:con3} if
\begin{equation} \label{eq:newcondbetter}
\lambda_1 B < \frac{1-\sum_{j=1}^\infty b(j)^2/B^2}{1+\sum_{j=1}^\infty b(j)^2/B^2}.
\end{equation}
\end{remark}
\begin{proof}[Proof of Remark~\ref{rem:newstatcond}]
We start by noticing that \eqref{eq:con1} implies $z$ is
summable, and by summing on both sides of \eqref{eq:deltadiff2} it
can readily be shown that $\sum_{j=0}^\infty z(j)=1/(1-\lambda_1
B)$. Since $b$ and $z$ are non--negative, we may apply the
Cauchy--Schwartz inequality to the right--hand side of
\eqref{eq:deltadiff2} to get
\begin{equation*}
    z(n)^2 \leq \l^2\sum_{j=0}^{n-1} z(j) \cdot \sum_{j=0}^{n-1}b(n-j)^2z(j),    \quad n\geq 1.
\end{equation*}
Since $z^2$ is summable, we get
\[
\sum_{n=1}^\infty z(n)^2\leq \l^2\sum_{j=0}^\infty z(j) \cdot
\sum_{n=1}^\infty \sum_{j=0}^{n-1}b(n-j)^2z(j)
=\l^2\frac{1}{(1-\lambda_1 B)^2} \sum_{j=1}^\infty b(j)^2.
\]
Therefore by this estimate and \eqref{eq:con3}, we have
\[
\frac{\sigma^2}{\lambda_1^2}\sum_{j=1}^\infty z(j)^2
\leq
\frac{\lambda_2-\lambda_1^2}{\lambda_1^2}\cdot \l^2\frac{1}{(1-\lambda_1 B)^2} \sum_{j=1}^\infty b(j)^2 <1,
\]
which is \eqref{eq:nsstat}. We notice that \eqref{eq:con2} can be written as $\lambda_2 B^2<1$, so \eqref{eq:con2}
is stronger than \eqref{eq:con3} if
\[
1<\lambda_1^2 B^2 +\frac{(1-\lambda_1 B)^2}{\sum_{j=1}^\infty b(j)^2/B^2}.
\]
which is equivalent to \eqref{eq:newcondbetter}, because $\l B<1$.
\end{proof}

\section{Exact Rates of Decay of the Autocovariance Function in the Class $\mathcal{W}(r)$}\label{sect:sub}
\subsection{Subexponential decay in linear Volterra summation equations}\label{admissVol}
In ascertaining rates of decay of Volterra equations we use
admissibility theory of Volterra operators, see
e.g.~\cite{jaigdr:2006}. \cite{jajd:pode} illustrates this facet of
admissibility theory for a discrete time Volterra equation whose
solution is an autocovariance function. We mention some pertinent
results of this theory. Consider the linear convolution equation
\begin{equation}\label{eq:scalarconvA}
    x(n+1)=f(n)+\sum_{i=0}^{n}F(n-i)x(i),\quad n\geq 0;\quad
    x(0)=x_0\in\R,
\end{equation}
where $f:\Z^+\to \R$  and $F:\Z^+\to \R$.  This
problem  has a unique solution $x:\Z^{+}\to\R$. In the case that
$x(n)\to 0$ as $n\to\infty$, our aim is to  describe the exact rate
of decay of $x$.
Our method is to introduce a suitable sequence $\gamma=\{\gamma(n)\}_{n\geq 0}$ which decays to zero and then to examine
the behaviour of
\begin{equation}\label{eq:ydef}
\omega(n)=x(n)/\gamma(n),
\end{equation}
and show that $\omega$ converges to a non-trivial limit. It then follows that $x(n)\to 0$ as $n\to\infty$ at exactly
the same rate as $\gamma(n)\to 0$.

We define a suitable class of real-valued weight functions, which
was studied in \cite{jaigdr:2006}.
\begin{definition}  \label{def:gensubexp}
Let $r>0$ be finite. A real-valued sequence
$\gamma=\{\gamma(n)\}_{n\geq0}$ is in $\mathcal{W}(r)$ if
$\gamma(n)>0$ for all $n\geq0$, and
\begin{gather}  \label{eq:p2}
\lim_{n\to\infty} \frac{\gamma(n-1)}{\gamma(n)}=\frac{1}{r},
\quad \sum_{i=0}^\infty  \gamma(i)r^{-i}<\infty,\\
\lim_{m\to\infty} \biggl(\limsup_{n\to\infty}\frac{1}{\gamma(n)}\sum_{i=m}^{n-m} \gamma(n-i)\gamma(i)\biggr)=0.
\label{eq:p1}\end{gather}
\end{definition}

Observe that if $r<1$ and $\gamma\in \mathcal{W}(r)$, then $\gamma$ decays; whereas if $r>1$, $\gamma$ diverges.
If $\gamma$ is in $\mathcal{W}(1)$, it is called a \emph{subexponential sequence},
one reason being that if $\gamma$ is in $\mathcal{W}(1)$, then
\begin{equation}\label{eq:subexpdiverge}
\lim_{n\to \infty}\gamma(n)\kappa^n=\infty\quad \text{for all $\kappa>1$.}
\end{equation}
The terminology is analogous with subexponential functions
and distributions.
Of course if $\gamma$ is in $\mathcal{W}(r)$ and $\delta(n)=r^{-n}\gamma(n)$, then $\delta$ is in
$\mathcal{W}(1)$.

Examples of sequences in $\mathcal{W}(r)$ include, but are not
limited to, $\gamma(n)=r^{n}n^{-\alpha}$ for $\alpha>1$;
$\gamma(n)=r^{n}n^{-\alpha}\exp(-n^{\beta})$ for
$\alpha\in\mathbb{R}$, $0<\beta<1$; and $\gamma(n)=r^{n} e^{-n/(\log
n)}$. The sequences defined by $\gamma(n) =r^{n}$, and
$\gamma(n)=r^{n}n^{-\alpha}$, $\alpha\leq 1$ are \emph{not} in
$\mathcal{W}(r)$.

We divide the results of this section into a discussion of
subexponential rates of decay ($r=1$) and a discussion of
$\mathcal{W}(r)$ rates of decay for $r<1$. While the proofs of both
of these sections are treated together, we choose to present the
results separately in order to emphasise the subexponential
behaviour in \eqref{eq:p2} which falls just short of long memory and
which is perhaps of greater interest in the context of time series.
The principal difference in the statement of these decay results is
that for sequences which are in $\mathcal{W}(1)$ we further require
that they are asymptotic to non--increasing sequences, whereas a
sequence in the class $\mathcal{W}(r)$, for $r<1$, is asymptotic to
a non-increasing sequence by the first part of \eqref{eq:p2}. Hence
we define a subclass $\mathcal{W}^{\downarrow}(r)$ of
$\mathcal{W}(r)$ for $r\in(0,1]$ by
\begin{align*}
\mathcal{W}^{\downarrow}(r)
    &:= \{g:\Z^+\to(0,\infty) :\quad \text{$g\in\mathcal{W}(r)$ and there exists $\gamma:\Z^+\to(0,\infty)$} \\
    &\qquad \text{ such that $\gamma(n+1)\leq\gamma(n)$ for all $n\in\Z^+$ and $g(n)\sim\gamma(n)$ as $n\to\infty$} \}.
\end{align*}
We note that $\mathcal{W}^{\downarrow}(r)=\mathcal{W}(r)$ for $r<1$.
This additional monotonicity is in practice quite a mild assumption given that we are interested in
determining a rate of \emph{decay} of $\rho$. We require it to simplify the asymptotic analysis of
certain infinite sums.

If $\gamma$ is a real sequence with $\gamma(n)>0$ for all $n\geq0$ and
$\{u(n)\}_{n\geq 0}$ is a  sequence in $\R^{d_1\times d_2}$ such that
$\lim_{n\to\infty} u(n)/\gamma(n)$ exists, then this limit is denoted by $L_\gamma u$.
This notation enables us to state succinctly \cite[Theorem~3.2]{jaigdr:2006}.
\begin{theorem}\label{thm:3b}
Suppose that there is
a $\gamma$ in $\mathcal{W}(r)$ such that $L_{\gamma}f$ and $L_{\gamma}F$ both exist, and that
\begin{equation}\label{eq:thm3a}
    \sum_{i=0}^{\infty}r^{-(i+1)}|F(i)|<1.
\end{equation}
Then the solution $x$ of \eqref{eq:scalarconvA} satisfies
\begin{equation}\label{eq:thm3b}
    L_{\gamma}x=\biggl(r- \sum_{i=0}^{\infty}r^{-i}F(i) \biggr)^{-1}[L_{\gamma}f+(L_{\gamma}F) \sum_{j=0}^{\infty}r^{-j}x(j)],
\end{equation}
where
\begin{equation}\label{eq:xtilde}
  \sum_{j=0}^{\infty}r^{-j}x(j)=\biggl(r- \sum_{k=0}^{\infty}r^{-k}F(k) \biggr)^{-1}[rx_0+\sum_{l=0}^{\infty}r^{-l}f(l)].
\end{equation}
\end{theorem}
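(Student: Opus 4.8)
Following the approach of \cite{jaigdr:2006}, the plan is to treat \eqref{eq:scalarconvA} as a forward-shifted linear Volterra convolution equation and to read off the asymptotics of $x$ from $\mathcal{W}(r)$-admissibility. There are three ingredients: a rescaling reducing to the subexponential case $r=1$; a weighted summability estimate together with a $Z$-transform evaluation, which produces \eqref{eq:xtilde}; and an analysis of $\omega$ from \eqref{eq:ydef}, which produces \eqref{eq:thm3b}.

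First I would reduce to $r=1$. Put $\delta(n)=r^{-n}\gamma(n)$, which lies in $\mathcal{W}(1)$ by the observation following Definition~\ref{def:gensubexp}, and set $\hat x(n)=r^{-n}x(n)$, $\hat f(n)=r^{-(n+1)}f(n)$, $\hat F(n)=r^{-(n+1)}F(n)$. Substituting into \eqref{eq:scalarconvA} shows that $\hat x$ solves an equation of the same form with $r$ replaced by $1$, that $\sum_i|\hat F(i)|=\sum_i r^{-(i+1)}|F(i)|<1$ by \eqref{eq:thm3a}, and that $L_\delta\hat f=r^{-1}L_\gamma f$, $L_\delta\hat F=r^{-1}L_\gamma F$; moreover a short computation shows that \eqref{eq:thm3b} and \eqref{eq:xtilde} for $(\hat x,\hat f,\hat F,\delta,1)$ are equivalent to the stated identities for $(x,f,F,\gamma,r)$. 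So I may assume $r=1$ and drop the hats. Next, absorbing the forward shift $x(n+1)$ and the initial value $x_0$ into the forcing term, the equation becomes a genuine convolution $x=g+H\ast x$ with $\|H\|_{\ell^1}=\sum_i|F(i)|<1$, where $g\in\ell^1$ since $g(n)=O(\gamma(n))$ and $\sum_n\gamma(n)<\infty$ by \eqref{eq:p2}. As $\|H\|_{\ell^1}<1$, the resolvent $R=\sum_{k\geq1}H^{\ast k}$ lies in $\ell^1$ and hence $x=g+R\ast g\in\ell^1$; taking $Z$-transforms at $\lambda=1$ in $x(n+1)=f(n)+(F\ast x)(n)$ yields $\tilde x(1)\bigl(1-\tilde F(1)\bigr)=x_0+\tilde f(1)$, which is \eqref{eq:xtilde}.

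For the limit itself I would argue directly on $\omega$. Divide \eqref{eq:scalarconvA} by $\gamma(n+1)$ and split the convolution sum into a head $0\leq i\leq m$, a tail $n-m\leq i\leq n$, and a middle $m<i<n-m$. For fixed $m$, the first relation in \eqref{eq:p2} gives $F(n-i)/\gamma(n+1)\to L_\gamma F$ and $f(n)/\gamma(n+1)\to L_\gamma f$, so the head tends to $(L_\gamma F)\sum_{i\leq m}x(i)$; rewriting the tail through $\omega$ and using $\gamma(n-j)/\gamma(n+1)\to1$, the tail behaves like $\sum_{j\leq m}F(j)\,\omega(n-j)$; and the middle is dominated by a constant multiple of $\sum_{m<i<n-m}\gamma(n-i)\gamma(i)/\gamma(n+1)$ --- using $|F(n-i)|\leq C\gamma(n-i)$ for large $n-i$ and an a priori bound $\sup_n|x(n)|/\gamma(n)<\infty$ obtained beforehand by a similar estimate resting on \eqref{eq:thm3a} --- and so vanishes on letting $n\to\infty$ and then $m\to\infty$ by \eqref{eq:p1}. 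Passing to $\limsup$ and $\liminf$ in $\omega$ and using $\sum_i|F(i)|<1$ to force the two to coincide, one concludes that $\Omega:=L_\gamma x$ exists and satisfies $\Omega\bigl(1-\tilde F(1)\bigr)=L_\gamma f+(L_\gamma F)\sum_j x(j)$, which on undoing the rescaling is \eqref{eq:thm3b}. (Alternatively, one may establish the asymptotics of each $H^{\ast k}$ by induction and sum the Neumann series for $R$ by dominated convergence, using a quantitative version of \eqref{eq:p1}.)

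The hard part is this last step: upgrading a finite $\limsup$ of $\omega$ to a genuine limit, rather than mere boundedness or oscillation. That is exactly where the strict contraction \eqref{eq:thm3a} and the ``negligible middle'' condition \eqref{eq:p1} in the definition of $\mathcal{W}(r)$ are used in full strength; the rescaling and the $Z$-transform bookkeeping around it are routine.
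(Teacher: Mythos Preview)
The paper does not give its own proof of this theorem: it is quoted as \cite[Theorem~3.2]{jaigdr:2006} and used as a black box, so there is nothing in the present paper to compare your argument against. Your sketch is in fact the method of \cite{jaigdr:2006}: rescale to $r=1$, obtain weighted $\ell^1$-summability from the strict contraction \eqref{eq:thm3a} (yielding \eqref{eq:xtilde} via the $Z$-transform identity), and then split the convolution into head/middle/tail pieces, using \eqref{eq:p2} for the head and tail and \eqref{eq:p1} for the middle, with the contraction forcing $\limsup\omega=\liminf\omega$. The alternative you mention at the end --- pushing the asymptotics through each $H^{\ast k}$ and summing --- is also a route taken in that literature. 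Either way your outline is sound; the only point to make explicit is the a~priori bound $\sup_n|x(n)|/\gamma(n)<\infty$, which you correctly flag as a prerequisite for the middle-term estimate and which follows from a Gronwall-type argument using \eqref{eq:thm3a} and the finiteness of $(\gamma\ast\gamma)/\gamma$ implied by \eqref{eq:p1}.
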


\subsection{Necessary and sufficient conditions for subexponential decay.}\label{W(1)}
Our first main results show that subexponential decay in $b$ implies subexponential decay in $\rho$, and
moreover that $\rho$ decays at exactly the same rate as $b$.
\begin{theorem}\label{thm:4}
    Let \eqref{eq:nsstat} and $\l\sum_{j=1}^{\infty}b(j)<1$ hold.
 If $b\in\mathcal{W}^{\downarrow}(1)$ then $\rho\in\mathcal{W}^{\downarrow}(1)$.
          Moreover,
        \begin{equation}\label{eq:rholim1}
        L_{b}\rho = \frac{\l}{\bigl(1-\l B\bigr)}\sum_{j=-\infty}^{\infty}\rho(j)
        =\frac{\l\mathbb{E}[\nu(0)^2]}{\bigl(1-\l B\bigr)^3}.
    \end{equation}
\end{theorem}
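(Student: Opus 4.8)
The plan is to view the Yule--Walker recursion of Proposition~\ref{prop:1} as a perturbed linear convolution equation of the form \eqref{eq:scalarconvA} and then apply Theorem~\ref{thm:3b} with $r=1$ and weight $\gamma = b$. Concretely, I would start from \eqref{eq:cov2}, namely $\rho(k) = \l\sum_{j=0}^{k-1} b(k-j)\rho(j) + f(k-1)$ for $k\geq 1$, with $f(k) = \l\sum_{j=1}^{\infty} b(k+j+1)\rho(-j)$, so that in the notation of \eqref{eq:scalarconvA} one has $F(i) = \l b(i+1)$ (shifted by one because \eqref{eq:scalarconvA} updates $x(n+1)$), $x_0 = \rho(0)$, and forcing term $f(k-1)$. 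Hypothesis \eqref{eq:thm3a} at $r=1$ reads $\sum_{i=0}^{\infty}|F(i)| = \l\sum_{i=1}^{\infty} b(i) = \l B < 1$, which is exactly the standing assumption $\l\sum_{j=1}^\infty b(j)<1$. Since $b\in\mathcal{W}^{\downarrow}(1)\subseteq\mathcal{W}(1)$, the weight $\gamma=b$ is admissible for the theorem; what remains is to verify that $L_b F$ and $L_b f$ exist, compute them, feed everything into \eqref{eq:thm3b}--\eqref{eq:xtilde}, and finally upgrade ``$L_b\rho$ exists and is positive'' to the membership statement $\rho\in\mathcal{W}^{\downarrow}(1)$.

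For the ingredients: $L_b F = \lim_{i\to\infty} \l b(i+1)/b(i) = \l\cdot(1/1)\cdot\lim b(i+1)/b(i)$, and since $\gamma\in\mathcal{W}(1)$ gives $\gamma(n-1)/\gamma(n)\to 1$ by the first part of \eqref{eq:p2}, we get $b(i+1)/b(i)\to 1$, hence $L_b F = \l$. The forcing term needs the key estimate $L_b f = 0$: writing $f(k-1) = \l\sum_{j=1}^{\infty} b(k+j)\rho(-j) = \l\sum_{j=1}^\infty b(k+j)\rho(j)$ (using $\rho(-j)=\rho(j)$), one divides by $b(k)$ and argues, using the monotonicity available from $b\in\mathcal{W}^{\downarrow}(1)$, the ratio convergence $b(k+j)/b(k)\to 1$ for fixed $j$, and the tail-control property \eqref{eq:p1} together with summability $\sum_j\rho(j)<\infty$ (which holds because $\rho$ is a constant multiple of $\chi_z$, itself summable under \eqref{eq:nsstat}), that $f(k-1)/b(k)\to 0$. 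This is the delicate step: one must split the sum over $j$ at a cutoff $m$, bound the head $\sum_{j=1}^{m} b(k+j)\rho(j)/b(k)$ by noting each ratio is bounded (by monotonicity $b(k+j)\le b(k)$ eventually) and $\rho$-terms are summable, and bound the tail $\sum_{j>m}$ by the same device while sending $m\to\infty$ last. Then Theorem~\ref{thm:3b} yields
\[
L_b\rho = \bigl(1 - \textstyle\sum_{i=0}^\infty F(i)\bigr)^{-1} L_b F \cdot \textstyle\sum_{j=0}^\infty x(j) = \frac{\l}{1-\l B}\sum_{j=0}^\infty \rho(j).
\]
Since $\rho(0) = \Var[X(0)]$ and $\sum_{j=1}^\infty\rho(j)$ counts the positive lags once, and because $\rho(-j)=\rho(j)$, the object $\sum_{j=-\infty}^\infty \rho(j)$ equals $2\sum_{j=0}^\infty\rho(j) - \rho(0)$; I would reconcile the one-sided sum produced by the theorem with the two-sided sum in \eqref{eq:rholim1} by re-deriving the same limit starting from the full infinite-delay equation in Proposition~\ref{prop:1} (summing $\rho(k)=\l\sum_{j=-\infty}^{k-1}b(k-j)\rho(j)$ and matching leading terms), so that the constant $\l/(1-\l B)$ multiplies exactly $\sum_{j=-\infty}^\infty\rho(j)$. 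The final equality in \eqref{eq:rholim1}, $\frac{\l\,\mathbb{E}[\nu(0)^2]}{(1-\l B)^3}$, then follows purely algebraically: from \eqref{eq:covres} one has $\rho(j) = C\,\chi_z(j)$ with $C = (a\sigma/(1-\l B))^2/(1-\Omega^2) = \mathbb{E}[\nu(0)^2]$, and $\sum_{j=-\infty}^\infty \chi_z(j) = \bigl(\sum_{j=0}^\infty z(j)\bigr)^2 = 1/(1-\l B)^2$ by the identity $\sum z(j) = 1/(1-\l B)$ noted in the proof of Remark~\ref{rem:newstatcond}; multiplying gives $\sum_{j=-\infty}^\infty\rho(j) = \mathbb{E}[\nu(0)^2]/(1-\l B)^2$, and one more factor $\l/(1-\l B)$ produces the claimed form.

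Finally, to conclude $\rho\in\mathcal{W}^{\downarrow}(1)$ rather than merely ``$\rho/b$ converges,'' I note that $L_b\rho$ is a \emph{strictly positive} finite constant: positivity because $\rho(j)\ge 0$ for all $j$ (Theorem~\ref{thm:lgds3}) and $\rho(0)=\Var[X(0)]>0$ under \eqref{eq.S0}, while finiteness is built into Theorem~\ref{thm:3b}. Write $L := L_b\rho \in (0,\infty)$. Then $\rho(n)/b(n)\to L$ means $\rho(n)\sim L\,b(n)$, and since $b\in\mathcal{W}^{\downarrow}(1)$, $b$ is asymptotic to a non-increasing positive sequence $\hat\gamma$, so $\rho(n)\sim L\hat\gamma(n)$ with $L\hat\gamma$ non-increasing; moreover $\rho$ inherits all three defining properties of $\mathcal{W}(1)$ from $b$ because the ratio $\rho(n-1)/\rho(n) = (\rho(n-1)/b(n-1))(b(n-1)/b(n))(b(n)/\rho(n))\to 1\cdot 1\cdot 1$, the weighted summability $\sum_n \rho(n) = L\sum_n b(n)(1+o(1))<\infty$ follows from that of $b$, and the tail condition \eqref{eq:p1} for $\rho$ reduces to that for $b$ after replacing $\rho$ by $L\,b$ up to a factor $(1+o(1))$ uniformly. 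Hence $\rho\in\mathcal{W}^{\downarrow}(1)$, completing the proof.

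The main obstacle I anticipate is the clean proof that $L_b f = 0$: it requires simultaneously exploiting the ratio-asymptotics and the eventual monotonicity of $b$ to dominate the infinite sum $\sum_{j\ge 1} b(k+j)\rho(j)/b(k)$ uniformly in $k$, and the bookkeeping between the one-sided and two-sided autocovariance sums when extracting the constant. Everything downstream — the $Z$-transform-style identity for $\sum z(j)$, the value of $\mathbb{E}[\nu(0)^2]$, and the closure of $\mathcal{W}^{\downarrow}(1)$ under asymptotic equivalence — is routine once that estimate is in hand.
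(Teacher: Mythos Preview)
Your overall plan --- apply Theorem~\ref{thm:3b} with weight $\gamma=b$ to the convolution form \eqref{eq:cov2} --- is exactly the paper's route to the first equality in \eqref{eq:rholim1}. But your computation of $L_b f$ is wrong, and the error propagates into the ``reconciliation'' paragraph that follows.

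You write that $L_b f=0$, yet in the very same sentence you observe $b(k+j)/b(k)\to 1$ for each fixed $j$. These are incompatible: with $f(n)=\l\sum_{j\ge1} b(n+j+1)\rho(j)$, the head $\sum_{j=1}^{m} b(n+j+1)\rho(j)/b(n)$ converges to $\l\sum_{j=1}^{m}\rho(j)$ as $n\to\infty$, not to~$0$; only the tail $\sum_{j>m}$ vanishes as $m\to\infty$. The correct value is
\[
L_b f \;=\; \l\sum_{j=1}^{\infty}\rho(j),
\]
which is precisely what the paper proves (using the asymptotic monotonicity in $\mathcal{W}^{\downarrow}(1)$ to dominate $b(n+j+1)/b(n)$ uniformly). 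Plugging this into \eqref{eq:thm3b} gives
\[
L_b\rho=\frac{1}{1-\l B}\Bigl(\l\sum_{j=1}^{\infty}\rho(j)+\l\sum_{j=0}^{\infty}\rho(j)\Bigr)
=\frac{\l}{1-\l B}\sum_{j=-\infty}^{\infty}\rho(j),
\]
and the two--sided sum appears \emph{automatically} --- the contribution $L_b f$ is exactly what carries the negative--lag autocovariances. No separate reconciliation with Proposition~\ref{prop:1} is needed (or possible, since your one--sided formula is simply incorrect).

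For the second equality in \eqref{eq:rholim1} your algebraic route via $\sum_{k\in\Z}\chi_z(k)=\bigl(\sum_{j\ge0}z(j)\bigr)^2=(1-\l B)^{-2}$ is valid and slightly different from the paper, which instead chains Lemma~\ref{lm:basyd} ($z(n)/b(n)\to\l/(1-\l B)^2$) and Lemma~\ref{lm:dasycov} ($\chi_z(n)/z(n)\to(1-\l B)^{-1}$). Both arrive at the same constant; the paper's route has the side benefit of establishing $\rho\in\mathcal{W}(1)$ before invoking Theorem~\ref{thm:3b}, whereas you rely on summability of $\rho$ from Theorem~\ref{thm:lgds3}, which is enough.
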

The proof of Theorem~\ref{thm:4} is a consequence of
Theorems~\ref{thm:lgds3} and ~\ref{thm:3b}. This result is strongly
related to \cite[Theorem~2]{Zaffaroni:2004}, about which we comment
presently. The limit on the righthand side of \eqref{eq:rholim1} is
zero only when $a\sigma=0$, which is ruled out under the standing
assumptions \eqref{eq.S0} discussed at the beginning of
Section~\ref{sec:autocov}. The limit formulae \eqref{eq:rholim1}
highlights the inherent short memory of stationary solutions of
ARCH($\infty$) equations, because the infinite sum can be expressed
in terms of a finite quantity.

A simple corollary of this result is that if $b$ obeys $b(k)/k^{-\alpha}\to c>0$ as $k\to\infty$ for some
$\alpha>1$, and \eqref{eq:nsstat} and $\l\sum_{j=1}^{\infty}b(j)<1$ also hold, then $b\in \mathcal{W}^{\downarrow}(1)$, and we have
\[
\lim_{k\to\infty} \frac{\rho(k)}{k^{-\alpha}}=c'>0.
\]
We notice that this strengthens slightly results in \cite{lgpkrl:2000} and \cite{lgds:2002}, which give
upper and lower polynomial bounds on the rate of decay.

The necessity of subexponential decay in $b$ is captured by the
following result, which to the best of the authors' knowledge, is
not analogous to known results in the time series literature. It
shows, under an additional stability condition to that in
Theorem~\ref{thm:4}, that if $\rho$ is decaying subexponentially,
then $b$ must decay subexponentially, and at the same rate.
\begin{theorem}\label{thm:4a}
    Let \eqref{eq:nsstat} and $\l\sum_{j=1}^{\infty}b(j)<1/2$ hold. Then $b\in\mathcal{W}^{\downarrow}(1)$
    if and only if $\rho\in\mathcal{W}^{\downarrow}(1)$, and both statements imply
       \eqref{eq:rholim1}.
\end{theorem}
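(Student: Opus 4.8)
The forward implication is immediate from Theorem~\ref{thm:4}: $\l B<1/2$ in particular gives $\l B<1$. So the work is the converse and the limit formula \eqref{eq:rholim1}. The plan is to pass through the resolvent sequence $z$ of Lemma~\ref{prop:ztran} and prove
\[
\rho\in\mathcal{W}^{\downarrow}(1)\ \Longrightarrow\ z\in\mathcal{W}^{\downarrow}(1)\ \Longrightarrow\ b\in\mathcal{W}^{\downarrow}(1);
\]
the second implication inverts the triangular resolvent recursion \eqref{eq:deltadiff2}, while the first ``takes a square root'' of the representation $\rho=\EE[\nu(0)^2]\,\chi_z$ coming from \eqref{eq:covres}, and is the genuinely new ingredient.

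I would treat the second implication first, as it is routine. Solving \eqref{eq:deltadiff2} for $b$ gives
\[
\l\,b(n)=z(n)-\l\sum_{j=1}^{n-1}z(n-j)\,b(j),\qquad n\geq 1,
\]
so, with $x(n):=b(n+1)$, a problem of the form \eqref{eq:scalarconvA} with $F(i)=-z(i+1)$, $f(\ell)=\l^{-1}z(\ell+2)$ and $x_0=z(1)/\l$. From $z\in\mathcal{W}^{\downarrow}(1)$ and the first part of \eqref{eq:p2} we get $L_z f=\l^{-1}$, $L_z F=-1$, and the condition \eqref{eq:thm3a} with $r=1$ reads
\[
\sum_{i=0}^{\infty}z(i+1)=\sum_{j=1}^{\infty}z(j)=\frac{1}{1-\l B}-1=\frac{\l B}{1-\l B}<1,
\]
which is equivalent to $\l B<1/2$; this is the only place where the hypothesis of Theorem~\ref{thm:4a} is stronger than that of Theorem~\ref{thm:4}. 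Theorem~\ref{thm:3b} then gives $L_z b=(1-\l B)^2/\l>0$, whence $b(n)\sim\l^{-1}(1-\l B)^2 z(n)$; since $\mathcal{W}^{\downarrow}(1)$ is closed under asymptotic equivalence (verify the three conditions of Definition~\ref{def:gensubexp} and pass the monotone majorant through) we conclude $b\in\mathcal{W}^{\downarrow}(1)$. The limit \eqref{eq:rholim1} is then bookkeeping: the dominated--convergence argument behind Theorem~\ref{thm:4} gives $L_b\chi_z=\bigl(\sum_{j\geq0}z(j)\bigr)L_b z=\l/(1-\l B)^3$, and multiplying by $\EE[\nu(0)^2]$ (see \eqref{eq:covres}, \eqref{eq:archvar}) yields $L_b\rho=\l\EE[\nu(0)^2]/(1-\l B)^3$, which also equals $(\l/(1-\l B))\sum_{j\in\Z}\rho(j)$ because $\sum_{j\in\Z}\rho(j)=\EE[\nu(0)^2]\bigl(\sum_{j\geq0}z(j)\bigr)^2$.

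The first implication, $\rho\in\mathcal{W}^{\downarrow}(1)\Rightarrow z\in\mathcal{W}^{\downarrow}(1)$, is the main obstacle. It is the reverse of the easy step in Theorem~\ref{thm:4}, where $z\in\mathcal{W}^{\downarrow}(1)$ yields $\chi_z\in\mathcal{W}^{\downarrow}(1)$ by dominated convergence in $\chi_z(k)=\sum_{j\geq0}z(j)z(j+k)$; here one must recover the decay of $z$ from that of its self--correlation. One bound is free: $\chi_z(k)\geq z(0)z(k)=z(k)$, so $z(k)\leq\EE[\nu(0)^2]^{-1}\rho(k)$, and $z$ inherits summability (also guaranteed directly by Lemma~\ref{prop:ztran}). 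The substantive part is the matching lower bound $z(k)\gtrsim\rho(k)$ and, above all, the verification of \eqref{eq:p2}--\eqref{eq:p1} for $z$ itself. I would isolate this as a deconvolution lemma for self--correlations of non--negative summable sequences --- the analogue, for the map $z\mapsto\chi_z$, of the kernel-to-solution deconvolutions of \cite{appkrol2,jaigdr:2006} --- reducing it to the uniform estimate $\sum_{m\leq j\leq k-m}z(j)z(j+k)=o(z(k))$ as $m\to\infty$. The structural facts available are non--negativity of $z$, the normalisation $z(0)=1$, and the recursion \eqref{eq:deltadiff2}, from which $z(n)\geq\l b(j_0)z(n-j_0)$ for any fixed $j_0$ with $b(j_0)>0$; the latter controls the ratios $z(n-1)/z(n)$ and rules out that $z$ has ``gaps'', and is what lets the crude two--sided bounds be upgraded to membership of $z$ in $\mathcal{W}^{\downarrow}(1)$. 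Once this lemma is in hand the two implications chain together and, with the constant computation above, complete the proof of Theorem~\ref{thm:4a}.
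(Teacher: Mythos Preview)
Your overall architecture matches the paper: factor the converse as $\rho\Rightarrow z\Rightarrow b$, with the second step being an application of Theorem~\ref{thm:3b} to the inverted resolvent relation (exactly the content of the paper's Lemma~\ref{lm:bres}, and your identification of $\l B<1/2$ with \eqref{eq:thm3a} for the inverted equation is correct). The limit formula \eqref{eq:rholim1} is also handled correctly once both steps are in place.

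The gap is the step $\rho\in\mathcal{W}^{\downarrow}(1)\Rightarrow z\in\mathcal{W}^{\downarrow}(1)$. You correctly flag it as the crux and obtain the trivial bound $z(k)\leq\chi_z(k)$, but the remainder is a plan, not a proof. The proposed reduction to ``$\sum_{m\leq j\leq k-m}z(j)z(j+k)=o(z(k))$'' is already suspect: in $\chi_z(k)=\sum_{j\geq0}z(j)z(j+k)$ the index $j$ is unbounded, so the truncation $j\leq k-m$ has no natural meaning here. More seriously, any such tail estimate requires control of $z(n+j)/z(n)$, which is precisely what you are trying to establish; the inequality $z(n)\geq\l b(j_0)z(n-j_0)$ you invoke gives only a one--sided bound on ratios (and uses $b$, about which nothing asymptotic is yet known), so it cannot deliver $z(n-1)/z(n)\to1$ or the subexponential convolution condition \eqref{eq:p1}. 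As written, the argument is circular.

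The paper breaks this circularity by never trying to prove $z\in\mathcal{W}(1)$ directly. Instead it works with the ratio $z(n)/P(n)$, $P=\rho/\mathbb{E}[\nu(0)^2]$, and uses only the \emph{known} asymptotics $P(n+j)/P(n)\to1$ coming from $\rho\in\mathcal{W}^{\downarrow}(1)$. Starting from $z(n)\leq P(n)$ (your observation) and the identity $P(n)=z(n)+\sum_{j\geq1}z(j)z(n+j)$, one bootstraps: an upper bound $\limsup z/P\leq U_m$ feeds back through the identity to give a lower bound $\liminf z/P\geq L_m=1-U_m\sum_{j\geq1}z(j)$, and truncating the sum at $m$ terms turns $L_m$ into an improved upper bound $U_{m+1}=1-L_m\sum_{j=1}^{m}z(j)$ (Lemma~\ref{lm:resb}). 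The hypothesis $\l B<1/2$ enters a second time here, as $\sum_{j\geq1}z(j)<1$, which keeps all $L_m>0$. A separate computation (Lemma~\ref{lm:udlim}) shows $U_m$ and $L_m$ converge to the common limit $1-\l B$, whence $z(n)/\rho(n)$ has a finite positive limit and $z$ inherits membership in $\mathcal{W}^{\downarrow}(1)$ from $\rho$. This iterative sandwich is the missing idea in your proposal.
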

In the same spirit, we establish later in the paper a corresponding
pair of results for sequences in $\mathcal{W}(r)$, as well as
necessary and sufficient conditions for $\rho$ to be bounded above
by a subexponential sequence.

A novel feature of the proof of Theorem~\ref{thm:4a} is that we deal
with the advanced difference equation \eqref{eq:covres}, rather than
a Volterra equation. The proof of this partial converse is more
delicate than that of Theorem~\ref{thm:4} itself. It relies mainly on
showing that $\rho$ is asymptotic to $z$; once this is done, a known
result from the theory of Volterra difference equations ensures that
$z$ is asymptotic to $b$.

\subsection{Connections of Theorem~\ref{thm:4} with extant work}
Theorem~\ref{thm:4} (and Lemma~\ref{lm:basyd}) assert that, when $b$
is subexponential, then both $\rho$ (and $z$) inherit the rate of
decay of $b$. A result in almost exactly this direction is proven
in~\cite[Theorem 2]{Zaffaroni:2004}. There, it is claimed that if
\eqref{eq:con1} holds (which forces $b$ to be summable) and
\begin {equation} \label{eq:slower}
    \lim_{k\to\infty}\frac{b(k)}{\zeta^k} =\infty, \quad \text{ for any } 0<\zeta<1,
\end{equation}
then
\begin{align} \label{eq:dasp}
    z(k) &\sim C_1 b(k) \quad  \text{ and } \quad \chi_{z}(k) \sim C_2 b(k), \quad \text{ as } k\to\infty,
\end{align}
where $C_1,C_2\in(0,\infty)$ and $\chi_z$ is as defined in \eqref{def.chiz}.
The first asymptotic estimate appears as part of the proof
of~\cite[Theorem 2]{Zaffaroni:2004}, but the statement of the
theorem lists only the second estimate as its conclusion.

It should be noted that when $b\in \mathcal{W}(1)$, it obeys the first condition in \eqref{eq:p2} (with, by definition, $r=1$),
and therefore obeys \eqref{eq:subexpdiverge} which is equivalent to \eqref{eq:slower}. Therefore, at a first
glance, it would appear that Theorem~\ref{thm:4} proves the same
result as in~\cite[Theorem 2]{Zaffaroni:2004}, but requires stronger
hypotheses, as $\mathcal{W}(1)$ is merely a subclass of the summable
sequences obeying \eqref{eq:slower}.

Despite this, we now show that there exist sequences $b$ which obey
\eqref{eq:slower}, and which also satisfy the other conditions of
\cite[Theorem 2]{Zaffaroni:2004}, but for which the claimed
asymptotic behaviour for $z$ and $\chi_z$ in \eqref{eq:dasp} does
not hold. Notably, the sequences we consider are ruled out under the
stronger conditions of Theorem~\ref{thm:4} above. In essence, we
show that if $b$ does not obey the first condition in \eqref{eq:p2}
due to the presence of a 2-periodic component in its decay, then
this 2--periodic component is present in the rates of decay of $z$
and of $\chi_z$. Furthermore, this decay is ``out of phase'', in the
sense that neither $z$ nor $\chi_z$ are asymptotic to $b$, and
therefore violate \eqref{eq:dasp}.

The example we cite has been explored in detail in~\cite{jajd:pode}
(see Examples 4.2 and 4.5 and Remarks 4.4 and 4.6
in~\cite{jajd:pode}). However, to make our presentation
self--contained, we restate the main details of these results and
comments here and examine a specific numerical example. Scrutinising
the presentation in \cite{jajd:pode}, it can be seen that the
example can be generalised to cover any rate of decay in
$\mathcal{W}(1)$.
\begin{example}
Let $b(n)=a_1 n^{-2}$ for $n/2\in \mathbb{N}$ and $b(n)=a_0 n^{-2}$ for $n/2\not\in \mathbb{N}$ where $a_0=0.5$ and $a_1=0.25$.
Also, let $\{\xi(n)\}_{n\in \mathbb{N}}$ be a sequence of independent and identically distributed non--negative random variables
with mean $\lambda_1=1$. Note that
\[
\lim_{n\to\infty}\frac{b(2n+1)}{b(2n)} = 2, \quad \lim_{n\to\infty}\frac{b(2n+2)}{b(2n+1)} = \frac{1}{2},
\]
so that $b$ does not obey the first part of \eqref{eq:p2} for $r=1$
(or indeed any value of $r$), but does obey \eqref{eq:slower}. Since
\eqref{eq:con1} holds, \cite[Theorem 2]{Zaffaroni:2004} predicts
that there exist $C_1, C_2\in (0,\infty)$ such that
\[
\lim_{n\to\infty} \frac{z(n)}{b(n)}=C_1, \quad \lim_{n\to\infty} \frac{\chi_z(n)}{b(n)}=C_2,
\]
while Theorem~\ref{thm:4} does not apply.

However, by applying \cite[Theorem 3.2]{jajd:pode}, to this example and setting $\phi(n)=n^{-2}$ for $n\geq 1$ and $\phi(0)=2$,
explicit calculations in \cite[Example 4.2, 4.3]{jajd:pode} demonstrate that
    we have $b(2n+i+1)/\phi(2n)\to a_i>0$ for $i\in\{0,1\}$ and $\phi\in\mathcal{W}(1)$ with $a_0\not=a_1$, and
\begin{align*}
    d_0 := \lim_{n\to\infty}\frac{z(2n)}{\phi(2n)} = a_0T_0 + a_1T_1, \qquad
    d_1 := \lim_{n\to\infty}\frac{z(2n+1)}{\phi(2n)} = a_1T_0 + a_0T_1,
\end{align*}
where
    $T_0 = \Lambda(2S_0(1-S_1)),$   $T_1 = \Lambda(S_0^2 + (1-S_1)^2)$,
    $\Lambda = \bigl((1-S_1)^2-S_0^2\bigr)^{-2}$ and $S_i = \l\sum_{j=0}^{\infty}b(2j+i+1)$.
In this specific example, it can be shown that
\begin{align*} 
    S_0 = a_0\sum_{j=0}^{\infty}\frac{1}{(2j+1)^2} = \frac{\pi^2}{16}, \quad
    S_1 = a_1\sum_{j=0}^{\infty}\frac{1}{2^{2}(j+1)^{2}} = \frac{\pi^2}{96}.
\end{align*}
and noting that $S_0+S_1<1$, one can evaluate $\Lambda, T_0$ and $T_1$ respectively and hence $d_0$ and $d_1$. Indeed
$\Lambda = 5.55073...$, $T_0 = 6.14391...$ and $T_1 = 6.58015...$, which gives $d_0 = 4.71699...$ and $d_1 = 4.82605....$
Therefore
\begin{gather*}
\lim_{n\to\infty} \frac{z(2n)}{\phi(2n)}=d_0, \quad \lim_{n\to\infty}  \frac{z(2n)}{b(2n)}=d_0/a_1=4d_0, \\
\lim_{n\to\infty} \frac{z(2n+1)}{\phi(2n+1)}=d_1, \quad \lim_{n\to\infty}  \frac{z(2n+1)}{b(2n+1)}=d_1/a_0=2d_1,
\end{gather*}
and $4d_0\neq 2d_1$. Hence the claim of the first statement of \eqref{eq:dasp} does not hold.

Consulting \cite[Example 4.5 and Remark 4.6]{jajd:pode} shows that, under the above conditions, we have
\begin{align*}
    \lim_{k\to\infty}\frac{\chi_{z}(2k)}{\phi(2k)} =
     a_0\tau_0+a_1\tau_1, \quad
    \lim_{k\to\infty}\frac{\chi_{z}(2k+1)}{\phi(2k)} =
     a_0\tau_1+a_1\tau_0,
\end{align*}
where
\[
    \tau_0 = T_0\sum_{j=0}^{\infty}z(2j) + T_1\sum_{j=0}^{\infty}z(2j+1), \quad
    \tau_1 = T_1\sum_{j=0}^{\infty}z(2j) + T_0\sum_{j=0}^{\infty}z(2j+1).
\]
Thus for $\chi_{z}\sim b$ we need
$\lim_{k\to\infty} \chi_{z}(2k)/b(2k) =\lim_{k\to\infty} \chi_{z}(2k+1)/b(2k+1)$,
which is equivalent to $\tau_0(a_0-a_1)(a_0+a_1)/(a_0a_1)=0$,
which can only occur if $\tau_0=0$. To rule this out, note that summing over \eqref{eq:deltadiff2}
for both $z(2n)$ and $z(2n+1)$ gives
\[
    \sum_{j=0}^{\infty}z(2j)= \frac{(1-S_1)}{(1-S_1)^2-S_0^2}, \quad \sum_{j=0}^{\infty}z(2j+1)=\frac{S_0}{(1-S_1)^2-S_0^2},
\]
Filling in the values of $S_0$, $S_1$, $T_0$ and $T_1$ enables us to
compute $\tau_0=22.5498...\neq 0$. Hence the limits are unequal. In
fact, we have
\begin{align*}
    \lim_{k\to\infty}\frac{\chi_{z}(2k)}{b(2k)} = \frac{a_0}{a_1}\tau_0+\tau_1 = 67.9375\ldots,
    \lim_{k\to\infty}\frac{\chi_{z}(2k+1)}{b(2k+1)} = \frac{a_1}{a_0}\tau_0+\tau_1 = 34.1128\ldots
\end{align*}
This contradicts the second statement in \eqref{eq:dasp}.
\end{example}

\subsection{Necessary and sufficient conditions for $\mathcal{W}(r)$ decay.}\label{W(r)}
If it is observed that the autocovariances of the ARCH($\infty$)
equations decay in a manner consistent with the class
$\mathcal{W}(r)$ for $r\in(0,1)$, then this can only occur if the memory
of the process, $b$, decays likewise.
\begin{theorem}\label{thm:6}
    Fix $r\in(0,1)$. Let \eqref{eq:nsstat} and $\l\sum_{j=1}^{\infty}b(j)r^{-j}<1$ hold.
If $b\in\mathcal{W}(r)$ then $\rho\in\mathcal{W}(r)$.
     Moreover,
    \begin{equation} \label{eq:rholimr}
         \lim_{n\to\infty}\frac{\rho(n)}{b(n)}
        =\frac{\mathbb{E}[\nu(0)^2]}{(1-\l\sum_{j=0}^{\infty}b(j)r^j)}\cdot\frac{\l}{(1-\l\sum_{j=1}^{\infty}b(j)r^{-j})^2}.
    \end{equation}
\end{theorem}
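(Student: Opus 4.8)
The plan is to mirror the structure of the proof of Theorem~\ref{thm:4}, replacing the role of $\mathcal{W}(1)$ with $\mathcal{W}(r)$ throughout. First I would invoke Lemma~\ref{prop:ztran} with $R = r$: the hypothesis $\l\sum_{j=1}^\infty b(j)r^{-j}<1$ is not quite in the form of condition (ii) of that lemma (which asks for $\l\sum b(j)R^j<1$), so I would instead apply it with the substitution $\hat b(j) = b(j)r^{-2j}$, or, more cleanly, work directly with the rescaled sequences $\hat z(n) = r^{-n}z(n)$, $\hat b(n) = r^{-n}b(n)$, $\hat\rho(n) = r^{-n}\rho(n)$, which convert $\mathcal{W}(r)$ statements about the original sequences into $\mathcal{W}(1)$ statements about the hatted ones. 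Under this rescaling, \eqref{eq:deltadiff2} becomes $\hat z(n) = \l\sum_{j=0}^{n-1}\hat b(n-j)\hat z(j)$, $\hat z(0)=1$, a Volterra equation of exactly the same form but with kernel $\l\hat b$, and the summability condition $\l\sum_j \hat b(j) = \l\sum_j b(j)r^{-j}<1$ is precisely the hypothesis. So the resolvent-to-kernel half of the argument (that $\hat b\in\mathcal{W}(1)\Rightarrow \hat z\in\mathcal{W}(1)$ with $L_{\hat b}\hat z$ explicitly computed) is just Theorem~\ref{thm:3b} applied to \eqref{eq:deltadiff2} in rescaled variables, exactly as in Lemma~\ref{lm:basyd}/Theorem~\ref{thm:4}.

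Next I would pass from $z$ (equivalently $\hat z$) to $\rho$. By Theorem~\ref{thm:lgds3} the hypothesis \eqref{eq:nsstat} gives the representation \eqref{eq:covres}, so $\rho(k)$ is a fixed positive multiple of $\chi_z(k) = \sum_{j\ge 0}z(j)z(j+k)$. The task is therefore to show $\chi_z\in\mathcal{W}(r)$ and to compute $\lim_{n\to\infty}\chi_z(n)/b(n)$. Equivalently, in rescaled form, I want $\hat\chi(n) := r^{-n}\chi_z(n) = \sum_{j\ge 0} r^j \hat z(j)\,\hat z(j+n)$ — note the extra $r^j$ weight, which is where $r<1$ really helps — to lie in $\mathcal{W}(1)$. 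Since $\hat z\in\mathcal{W}(1)$ and $\sum_j r^j\hat z(j) = \sum_j z(j)<\infty$ (by Lemma~\ref{prop:ztran}/summability of $z$, with value $1/(1-\l\sum_j b(j)r^j)$ after a rescaled summation of \eqref{eq:deltadiff2}), a dominated-convergence / split-the-sum argument of the kind already used in the $\mathcal{W}(1)$ proofs gives $\hat\chi(n) \sim \bigl(\sum_{j\ge 0} r^j\hat z(j)\bigr) L_{\hat b}\hat z \cdot \hat b(n)$, i.e. $\chi_z(n)/b(n) \to \bigl(\sum_j z(j)\bigr)\cdot(L_b\rho\text{-type constant for }z)$. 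Combining this with the explicit constant $C_1 = L_{\hat b}\hat z = \l/(1-\l\sum_j b(j)r^{-j})^2$ from the first step and the prefactor in \eqref{eq:covres} (which equals $\mathbb{E}[\nu(0)^2]$ up to the bookkeeping already done in \eqref{eq:archvar}) should produce exactly the stated limit \eqref{eq:rholimr}. I would also need to check the last clause of Definition~\ref{def:gensubexp}, \eqref{eq:p1}, for $\chi_z$; this follows from the same property for $b$ (inherited by $z$ via \cite{jaigdr:2006}) together with the summability of $z$, by the standard nested-sum estimate.

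The main obstacle I anticipate is the same one that is delicate in the $\mathcal{W}(1)$ case: verifying that $\chi_z$ (hence $\rho$) genuinely satisfies property \eqref{eq:p1} and the ratio condition in \eqref{eq:p2}, rather than merely having the right first-order asymptotic. The ratio condition $\chi_z(n-1)/\chi_z(n)\to 1/r$ is essentially automatic once $\chi_z(n)\sim C\,b(n)$ and $b\in\mathcal{W}(r)$, so the real work is \eqref{eq:p1}. One must bound $\frac{1}{\chi_z(n)}\sum_{i=m}^{n-m}\chi_z(n-i)\chi_z(i)$ by expanding each $\chi_z$ as an inner sum over $z$'s, interchanging summations carefully (legitimate because everything is non-negative), and reducing to the analogous limsup for $b$ multiplied by convergent tails of $z$; the bookkeeping with the three indices and the $r$-weights is where one has to be careful not to lose a factor. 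A secondary, purely technical point is the precise form in which Lemma~\ref{prop:ztran} is applied: one must confirm that $\l\sum_j b(j)r^{-j}<1$ does imply the hypotheses of that lemma in the relevant rescaled setup (in particular that $D(\lambda)=1/\psi(\lambda)$ is well-defined on $|\lambda|\le r^{-1}$, so that $\hat z$ has the claimed summability), and that all sums $\sum_j b(j)r^j$, $\sum_j b(j)r^{-j}$ appearing in \eqref{eq:rholimr} are finite — the former is immediate from $r<1$ and summability of $b$, the latter is the hypothesis.
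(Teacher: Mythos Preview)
Your approach is correct and closely parallel to the paper's, though you take a detour the paper avoids. The paper does not rescale to $\mathcal{W}(1)$: Theorem~\ref{thm:3b} is stated for arbitrary $r>0$, so Lemma~\ref{lm:basyd} applies it directly to \eqref{eq:deltadiff2} to obtain $z(n)/b(n)\to \l/(1-\l\sum_{j} b(j)r^{-j})^2$, and Lemma~\ref{lm:dasycov} proves $\chi_z(n)/z(n)\to \sum_{j\ge 0} z(j)r^j = 1/(1-\l\sum_{j} b(j)r^j)$ by exactly the split--the--sum argument you sketch (for $r<1$ the ratio bound $z(n+j)/z(n)\le r^{-j}$ for $n$ large supplies the uniform tail control, and no monotonicity of $b$ is needed since $\mathcal{W}^{\downarrow}(r)=\mathcal{W}(r)$ for $r<1$). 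Multiplying by $\mathbb{E}[\nu(0)^2]$ from \eqref{eq:covres} gives \eqref{eq:rholimr} at once. Your rescaling buys nothing beyond conceptual comfort, while the paper's direct route is shorter and avoids the bookkeeping around Lemma~\ref{prop:ztran} that you flag.

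Your anticipated obstacle is not one. Once $\rho(n)/b(n)\to C\in(0,\infty)$ with $b\in\mathcal{W}(r)$, membership of $\rho$ in $\mathcal{W}(r)$ is automatic: each of the three conditions in Definition~\ref{def:gensubexp} is preserved under asymptotic equivalence up to a positive constant. There is no need to expand $(\chi_z\ast\chi_z)(n)/\chi_z(n)$ and re--derive \eqref{eq:p1} from scratch. A minor algebraic slip: in your rescaled formula one gets $\hat\chi(n)=\sum_{j\ge 0} r^{2j}\hat z(j)\hat z(j+n)$, with weight $r^{2j}$ rather than $r^{j}$; this is inconsequential for the argument.
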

A converse corresponding to Theorem~\ref{thm:4a} may also be stated.
\begin{theorem}\label{thm:6a}
    Fix $r\in(0,1)$. Let \eqref{eq:nsstat} and $\l\sum_{j=1}^{\infty}b(j)r^{-j}<1/2$ hold. Then
 $b\in\mathcal{W}(r)$ if and only if $\rho\in\mathcal{W}(r)$ and both imply \eqref{eq:rholimr}.
\end{theorem}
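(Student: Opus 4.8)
The plan is to establish Theorem~\ref{thm:6a} by combining the forward implication already supplied by Theorem~\ref{thm:6} with a converse argument that mirrors the structure of the proof of Theorem~\ref{thm:4a}, but carried out in the weighted setting $\mathcal{W}(r)$ with $r\in(0,1)$. The forward direction is immediate: under the hypotheses of Theorem~\ref{thm:6a}, the condition $\l\sum_{j=1}^{\infty}b(j)r^{-j}<1/2$ certainly implies $\l\sum_{j=1}^{\infty}b(j)r^{-j}<1$, so if $b\in\mathcal{W}(r)$ then Theorem~\ref{thm:6} applies verbatim and yields $\rho\in\mathcal{W}(r)$ together with the limit formula \eqref{eq:rholimr}. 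Thus the entire content of the theorem is the implication $\rho\in\mathcal{W}(r)\implies b\in\mathcal{W}(r)$, and the fact that under this implication \eqref{eq:rholimr} still holds (which then follows from Theorem~\ref{thm:6} once $b\in\mathcal{W}(r)$ is known).

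For the converse, the strategy is to work with the advanced difference equation \eqref{eq:cov}, rewritten as the perturbed Volterra equation \eqref{eq:cov2}--\eqref{eq:varpar}, and to transfer the assumed $\mathcal{W}(r)$-decay of $\rho$ first to the resolvent $z$ and then from $z$ to the kernel $b$. Concretely, I would first show that $\rho(n)\sim c\,z(n)$ as $n\to\infty$ for some constant $c>0$. Using the variation of parameters formula \eqref{eq:varpar}, $\rho(k)=z(k)\rho(0)+\sum_{j=0}^{k-1}z(k-j-1)f(j)$, where $f(k)=\l\sum_{j=1}^{\infty}b(k+j+1)\rho(-j)$; since $\rho(-j)=\rho(j)$ is bounded (it decays), $f(k)$ is a tail sum of $b$ against a bounded sequence, so $f$ decays in a controlled way relative to $b$, and one expects $f$ to be dominated by (and in fact comparable to) $b$. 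The key point is that $\mathcal{W}(r)$ is closed under the convolution-type operations appearing here — this is precisely what \eqref{eq:p1} is designed to control — so the convolution $z\ast f$ inherits a $\mathcal{W}(r)$-type asymptotic, and \eqref{eq:varpar} then forces $\rho$ and $z$ to be asymptotically proportional. The condition $\l B_r<1/2$, rather than merely $<1$, is what gives enough room in the resolvent estimates (compare the role of the factor $1/2$ in Theorem~\ref{thm:4a}) to run this comparison rigorously and to exclude degenerate cancellation.

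Once $\rho\in\mathcal{W}(r)$ is shown to imply $z\in\mathcal{W}(r)$ (or at least $z(n)\sim \text{const}\cdot\rho(n)$, hence $z\in\mathcal{W}(r)$ by closure of the class under asymptotic equivalence), the final passage from $z$ to $b$ uses the resolvent equation \eqref{eq:deltadiff2}, $z(n)=\l\sum_{j=0}^{n-1}b(n-j)z(j)$, $z(0)=1$. This is exactly a linear Volterra summation equation of convolution type with kernel $\l b$, and there is a standard result from the theory of Volterra difference equations — of the type alluded to in the discussion after Theorem~\ref{thm:4a} and available in \cite{jaigdr:2006} — which states that for such equations, when the solution $z$ lies in a subexponential-type class $\mathcal{W}(r)$ and the relevant $Z$-transform condition $\l\sum_{j=1}^{\infty}b(j)r^{-j}<1$ holds, the kernel $b$ must lie in $\mathcal{W}(r)$ as well, with $z(n)\sim(1-\l\sum_j b(j)r^j)^{-2}\cdot\l\cdot b(n)/(\text{something})$; one reads off $b\in\mathcal{W}(r)$ and recovers \eqref{eq:rholimr} by chaining the two proportionality constants. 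I would state this transfer result as a separate lemma (the analogue of what is called Lemma~\ref{lm:basyd} in the $r=1$ case) and invoke it here.

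The main obstacle I anticipate is the first step — proving $\rho(n)\sim c\,z(n)$ from the assumption $\rho\in\mathcal{W}(r)$ alone, without yet knowing that $b\in\mathcal{W}(r)$. The difficulty is circular in flavour: the natural tool for controlling $z\ast f$ in \eqref{eq:varpar} is the closure of $\mathcal{W}(r)$ under convolution, but that closure is cleanest when one already knows the summands are in $\mathcal{W}(r)$, whereas here we only know $\rho$ (and hence $f$, which is a $b$-tail weighted by $\rho$) lies in the class, and $z$ is so far only known to be summable against $r^{-n}$ via Lemma~\ref{prop:ztran}. The way through is to exploit the advanced equation \eqref{eq:cov} directly — as the remark after Theorem~\ref{thm:4a} emphasises, the novelty is to work with the advanced difference equation rather than the Volterra equation — using the two-sided summability $\sum_{j=-\infty}^{\infty}\rho(j)r^{-|j|}<\infty$ together with a careful splitting of the defining sum $\rho(k)=\l\sum_{j=-\infty}^{k-1}b(k-j)\rho(j)$ into a ``near'' part (indices $j$ close to $k$, governed by the decay of $\rho$) and a ``far'' part (governed by the decay of $b$), and showing by a bootstrapping/dominated-ratio argument that the ratio $\rho(n)/z(n)$ has a limit. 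The strengthened hypothesis $\l\sum_j b(j)r^{-j}<1/2$ is exactly what makes this contraction-type bootstrap converge.
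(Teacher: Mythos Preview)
Your high--level structure is right: the forward direction is Theorem~\ref{thm:6}, and the converse should proceed in two stages, first $\rho\Rightarrow z$ and then $z\Rightarrow b$. The second stage is indeed a known transfer result (Lemma~\ref{lm:bres}, obtained by swapping the roles of $b$ and $z$ in \eqref{eq:deltadiff2} and applying Theorem~\ref{thm:3b}), and the hypothesis $\l\sum_j b(j)r^{-j}<1/2$ is exactly what that step needs.

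The gap is in the first stage. Both of your proposed routes --- variation of parameters \eqref{eq:varpar} and the Yule--Walker equation \eqref{eq:cov} --- express $\rho$ in terms of $b$ (directly or through $f$), and you have correctly diagnosed the circularity: you cannot control $f$ or the $b$--tail without already knowing that $b\in\mathcal{W}(r)$. The ``advanced difference equation'' that the remark after Theorem~\ref{thm:4a} points to is not \eqref{eq:cov} but the representation \eqref{eq:covres}, i.e.
\[
\rho(n)=\mathbb{E}[\nu(0)^2]\,\chi_z(n)=\mathbb{E}[\nu(0)^2]\sum_{j=0}^{\infty}z(j)z(n+j),
\]
which relates $\rho$ to $z$ \emph{alone}, with no appearance of $b$. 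This is what breaks the circularity. Writing $P(n)=\rho(n)/\mathbb{E}[\nu(0)^2]$ one has $P(n)=z(n)+\sum_{j\geq 1}z(j)z(n+j)$, which immediately gives $z(n)/P(n)\leq 1$ and, via $z(n+j)\leq P(n+j)$ and the ratio asymptotics $P(n+j)/P(n)\to r^j$ coming from $\rho\in\mathcal{W}(r)$, a first lower bound $\liminf z(n)/P(n)\geq 1-\sum_{j\geq 1}z(j)r^j$. The paper (Lemmata~\ref{lm:udlim} and~\ref{lm:resb}) then iterates this, producing sequences $L_m\leq \liminf z/P\leq \limsup z/P\leq U_{m+1}$ that are shown to converge to the common limit $1-\l\sum_j b(j)r^j$; here the condition $\l\sum_j b(j)r^{-j}<1/2$ enters a second time, via Remark~\ref{rk:l1}, to ensure $\sum_{j\geq 1}z(j)r^j<1$ so that $L_1>0$ and the squeeze is non--degenerate. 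Once $z(n)/\rho(n)$ has a positive finite limit, $z\in\mathcal{W}(r)$ follows and Lemma~\ref{lm:bres} finishes the proof. Your bootstrap intuition is correct, but it must be run on \eqref{eq:covres}, not on \eqref{eq:cov} or \eqref{eq:varpar}.
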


We remark that the rate of decay exhibited by a function in the
weight class of functions $\mathcal{W}(r)$, for $r<1$, is faster
than a purely geometric rate of decay. Let $b\in \mathcal{W}(r)$,
for $r<1$, and suppose that the conditions of Theorem~\ref{thm:6}
hold. Consider the open disc $D=\{\lambda\in \mathbb{C}:|\lambda|<
1/r\}$ of radius $1/r$ in the complex plane. Then the $Z$-transform
of $b$ is defined on $D$ and on the boundary of $D$, $\partial
D=\{\lambda\in \mathbb{C}:|\lambda|= 1/r\}$. Thus $\psi$, of Lemma~\ref{prop:ztran},
is well defined on $\bar{D}=D\cup \partial D$. However, by the conditions of
Theorem~\ref{thm:6}, $\psi$ has no zeroes in $\bar{D}$. Moreover,
because $b$ is in $\mathcal{W}(r)$, and $b(j)\geq 0$, we have
$\sum_{j=1}^\infty b(j)(1/r+\epsilon)^j=+\infty$ for every
$\epsilon>0$, and therefore neither the $Z$--transform of $b$, nor
$\psi$, are defined for real $\lambda>1/r$. Therefore the
characteristic equation $\psi(\lambda)=0$ excludes the possibility that there are
geometrically bounded solutions of $z$ at any rate $(1/|\lambda|)^n$
for $|\lambda|\leq 1/r$. On the other hand, Theorem~\ref{thm:6}
ensures that $z$ decays at the rate $r^n$ times a subexponential
sequence.

$\psi$ and the $Z$-transform of $b$ may be well defined in other
regions of the complex plane in the complement of $\bar{D}$, and
indeed $\psi$ may have zeroes in these other regions. Irrespective
of these potential zeroes, it is the $\mathcal{W}(r)$ rate of decay
of $b$ which determines the asymptotic behaviour of the resolvent
$z$ (i.e., the $\mathcal{W}(r)$ rate of decay dominates the
geometrically decaying solutions associated with the zeroes of
$\psi$). This analysis is consistent with Theorem~\ref{thm:22} which
describes a geometric decay. However, in light of the above
comments, it is apparent that this geometric decay rate need not be
given in terms of the roots of the characteristic equation.
\section{Bounds on the Decay Rate of the Autocovariance Function}\label{sect:bounds}
In this section we show that if there are decaying bounds imposed upon the kernel of
\eqref{eq:cov} then this forces the autocovariance function to also
be bounded with the same bounding decay rates. While the thrust of Section~\ref{sect:sub} was that
specific rates of decay of the kernel imply those same rates of
decay arising in the autocovariance function, we present an explicit
example where a bound in the rate of decay present in the autocovariance function
does not arise from the same rate of decay in the kernel.

Many of the results of this section hinge on the positivity of either $b$ or $\rho$ rather than merely on non--negativity.
Following on from the standing assumptions \eqref{eq.S0} at the
start of Section~\ref{sec:autocov}, we may assume that $b$ has at
least one positive component. Therefore, we are free to assume that
\begin{equation} \label{A1}
\text{
There exists a minimal $1\leq j^* <\infty$ such that $b(j^*) > 0$}. \tag{\text{A}$_1$}
\end{equation}
Then assuming \eqref{A1},
\[
    z(j^*)=\l\sum_{l=0}^{j^*-1}b(j^*-l)z(l)\geq\l b(j^*)>0
\]
and
\[
    \rho(j^*) = \mathbb{E}[\nu(0)^2]\sum_{l=0}^{\infty}z(l)z(l+j^*) \geq \mathbb{E}[\nu(0)^2]z(j^*)>0.
\]
 By \eqref{eq:cov}, for $k\geq0$ we see that
\begin{equation*}
    \rho(k+1) = \lambda_1\sum_{l=-\infty}^{k}b(k+1-l)\rho(l)
    \geq \lambda_1 b(k+1+j^*)\rho(-j^*), 
    \end{equation*}
    so
    \begin{align}
    \rho(k+1)\geq \lambda_1 b(k+1+j^*)\rho(j^*). \label{eq:nont}
\end{align}
Similarly, for all $k>j^*$,
    $z(k)\geq \lambda_1 b(k-j^*)z(j^*)$.

\begin{theorem}\label{thm:bounds}
Let $r\in(0,1]$ and suppose that $\l\sum_{j=1}^{\infty}b(j)r^{-j}<1$ and \eqref{eq:nsstat}
hold. Let $\gamma\in\mathcal{W}^\downarrow(r)$ be such that $b(n)\leq \gamma(n)$ for all $n\geq 0$.
 Then
    \begin{equation}\label{eq:ineq}
    \text{There exists $C_2\in (0,\infty)$ such that }
        \rho(n)\leq C_2\gamma(n), \quad \text{ for all $n\geq0$}.
    \end{equation}
\end{theorem}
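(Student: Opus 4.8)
The plan is to reduce \eqref{eq:ineq} to a one-sided bound $z(n)\le C_1\gamma(n)$ for the resolvent $z$, to prove that bound directly from the recursion \eqref{eq:deltadiff2} by a maximum-sequence argument, and then to transfer it back to $\rho$ through \eqref{eq:covres}. First I would observe that $r\le1$ makes $r^{-j}\ge1$, so $\lambda_1\sum_{j\ge1}b(j)r^{-j}<1$ forces $\lambda_1 B<1$; hence \eqref{eq:con1} holds, and together with \eqref{eq:nsstat} this lets me invoke Theorem~\ref{thm:lgds3}: the stationary solution exists, $z\ge0$ satisfies \eqref{eq:deltadiff2}, and \eqref{eq:covres} gives $\rho(n)=\mathbb{E}[\nu(0)^2]\,\chi_z(n)$ with $\chi_z(n)=\sum_{j\ge0}z(j)z(j+n)$ and $\mathbb{E}[\nu(0)^2]=\bigl(a\sigma/(1-\lambda_1B)\bigr)^2/(1-\Omega^2)\in(0,\infty)$ under \eqref{eq.S0}. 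Applying Lemma~\ref{prop:ztran} with $R=1/r$ (or summing \eqref{eq:deltadiff2} directly) gives $\sum_{j\ge0}z(j)r^{-j}=(1-\lambda_1\sum_{j\ge1}b(j)r^{-j})^{-1}<\infty$, and in particular $\sum_j z(j)<\infty$. Since $\gamma\in\mathcal{W}^{\downarrow}(r)$ is asymptotic to a non-increasing sequence (and eventually decreasing if $r<1$), there is $c\in(0,\infty)$ with $\gamma(a+b)\le c\,\gamma(a)$ for all $a,b\ge0$; so once $z(n)\le C_1\gamma(n)$ is available,
\[
\chi_z(n)=\sum_{j\ge0}z(j)z(j+n)\le C_1\sum_{j\ge0}z(j)\gamma(j+n)\le C_1 c\Bigl(\sum_{j\ge0}z(j)\Bigr)\gamma(n),
\]
and \eqref{eq:ineq} follows after multiplying by $\mathbb{E}[\nu(0)^2]$ and enlarging the constant to absorb the finitely many small $n$.

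The heart of the argument is the bound $z(n)\le C_1\gamma(n)$. I would set $\psi(n)=z(n)/\gamma(n)$, $M_n=\max_{0\le j\le n}\psi(j)$, and fix small $\varepsilon,\varepsilon_0>0$ with $q:=(1+\varepsilon_0)\lambda_1\sum_{j\ge1}b(j)r^{-j}+\varepsilon<1$. Using \eqref{eq:p1} I would choose $m$ so large that $\limsup_n\gamma(n)^{-1}\sum_{j=m}^{n-m}\gamma(n-j)\gamma(j)<\varepsilon/\lambda_1$, and using the ratio limit in \eqref{eq:p2} choose $n_0\ge2m$ so that $\gamma(n-j)\le(1+\varepsilon_0)r^{-j}\gamma(n)$ for all $0\le j\le m-1$ and $n\ge n_0$. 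For $n\ge n_0$ I would split the sum in \eqref{eq:deltadiff2} into the ranges $0\le j\le m-1$, $m\le j\le n-m$, and $n-m+1\le j\le n-1$. In the first block, bounding $b(n-j)\le\gamma(n-j)\le(1+\varepsilon_0)r^{-j}\gamma(n)$ and using $\sum_j z(j)r^{-j}<\infty$ gives a bound $c_1\gamma(n)$ with $c_1$ independent of $n$. In the middle block, bounding $b(n-j)\le\gamma(n-j)$, $z(j)\le M_n\gamma(j)$ and using the choice of $m$ gives $\le\varepsilon M_n\gamma(n)$. In the last block, putting $i=n-j\in\{1,\dots,m-1\}$ and bounding $z(n-i)\le M_n\gamma(n-i)\le(1+\varepsilon_0)r^{-i}M_n\gamma(n)$ gives $\le(1+\varepsilon_0)\lambda_1\bigl(\sum_{i\ge1}b(i)r^{-i}\bigr)M_n\gamma(n)$. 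Summing the three blocks yields $\psi(n)\le c_1+qM_n$ for all $n\ge n_0$; as $M_n$ is non-decreasing and $q<1$, a short induction shows $M_n\le\max\{M_{n_0-1},c_1/(1-q)\}$ for $n\ge n_0$ (the only non-trivial case being $M_n=\psi(n)$, where $M_n\le c_1+qM_n$). Hence $\sup_n\psi(n)<\infty$, which is the required bound.

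I expect the second step to be the obstacle, and within it the crucial point is the \emph{asymmetric} splitting of the convolution in \eqref{eq:deltadiff2}. The block that feeds $M_n$ back into the estimate is the one where $z$ is evaluated near the current index (so $z(n-i)\lesssim M_n\gamma(n)$) and, correspondingly, $b$ is evaluated at \emph{small} indices, whose $r^{-\bullet}$-weighted mass is $\lambda_1\sum_{i\ge1}b(i)r^{-i}<1$; this is precisely where the stability hypothesis is used, and it is what makes $q<1$ attainable. A symmetric treatment, or the naive comparison of $z$ with the resolvent of $\lambda_1\gamma$, would instead generate the coefficient $\lambda_1\sum_j\gamma(j)r^{-j}$, which can exceed $1$ because only $b\le\gamma$ is assumed and $\gamma$ need not be summable against $r^{-j}$; the recursion would then fail to close. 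Everything else — the reduction, the passage from $z$ to $\chi_z$ to $\rho$, and the bookkeeping for the $r=1$ versus $r<1$ cases of $\mathcal{W}^{\downarrow}$ — should be routine; alternatively, the bound $z\le C_1\gamma$ can be viewed as a one-sided ($\limsup$ in place of $\lim$) variant of the admissibility estimates behind Theorem~\ref{thm:3b} (cf.\ \cite{jaigdr:2006}).
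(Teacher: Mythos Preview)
Your proposal is correct and follows essentially the same route as the paper: both reduce \eqref{eq:ineq} to the bound $z(n)\le C_1\gamma(n)$ and then pass to $\rho$ via \eqref{eq:covres} and the (asymptotic) monotonicity of $\gamma$. The only difference is packaging: the paper establishes the bound on $z/\gamma$ by rewriting \eqref{eq:deltadiff2} as $x(n+1)=\sum_{j=0}^n H(n,j)x(j)$ with $x=z/\gamma$ and invoking the abstract boundedness criterion Lemma~\ref{thm:Vtabnd} (verifying $W_H\le\lambda_1\sum_j b(j)r^{-j}<1$ via exactly your Block~2/Block~3 splitting, and $H_M<\infty$ via your Block~1), whereas you carry out the same three-block convolution estimate and closing recursion $\psi(n)\le c_1+qM_n$ by hand; your final remark that this is a one-sided variant of the admissibility estimates in \cite{jaigdr:2006} is precisely the content of that lemma.
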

\begin{remark}
    It is to be observed that Theorem~\ref{thm:bounds} is concerned in part with bounds in the class
 of non--increasing functions in $\mathcal{W}(1)$, which is a wider
class than the class of summable hyperbolically decaying
functions examined in \cite[Proposition~3.2]{lgpkrl:2000} and \cite[Corollary~3.2]{lgds:2002}.
\end{remark}
We now show that the conditions of Theorem~\ref{thm:bounds} are sharp if we are to observe an upper bound on $\rho$ in
$\mathcal{W}^\downarrow(r)$. Then we mention a
result concerning lower bounds on the autocovariance function.
\begin{theorem}\label{thm:conub}
Suppose that \eqref{eq:con1} and \eqref{eq:nsstat} hold and suppose that $\gamma\in\mathcal{W}^\downarrow(r)$ for $r\in (0,1]$.
Then the following are equivalent
\begin{itemize}
\item[(a)] $\l\sum^{\infty}_{j=1}b(j)r^{-j}<1$ and there exists $C_0\in (0,\infty)$ such that
\[
b(n)\leq C_0\g(n) \quad \text{ for all $n\geq1$};
\]
\item[(b)] There exists $C_2\in (0,\infty)$ such that
\[
\rho(n)\leq C_2\g(n) \quad \text{ for all $n\geq0$}.
\]
\end{itemize}
\end{theorem}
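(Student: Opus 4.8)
The proof of Theorem~\ref{thm:conub} naturally splits into the two implications (a)$\Rightarrow$(b) and (b)$\Rightarrow$(a).

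\textbf{The implication (a)$\Rightarrow$(b).} This direction follows almost immediately from Theorem~\ref{thm:bounds}. Assuming (a), the bound $b(n)\leq C_0\gamma(n)$ need not hold at $n=0$, so first I would replace $\gamma$ by the sequence $\hat\gamma(n):=C_0\gamma(n) + b(0)\mathbf{1}_{\{n=0\}}$ (or simply $\hat\gamma(n) = \max(C_0,b(0))\gamma(n)$ if $\gamma(0)>0$, which it is by definition of $\mathcal{W}^\downarrow(r)$). Then $\hat\gamma\in\mathcal{W}^\downarrow(r)$, since multiplying by a constant and altering finitely many terms preserves membership in $\mathcal{W}^\downarrow(r)$ (all three conditions in Definition~\ref{def:gensubexp}, as well as the monotone-majorant property, are insensitive to such changes), and $b(n)\leq\hat\gamma(n)$ for all $n\geq0$. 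Since $\l\sum_{j=1}^\infty b(j)r^{-j}<1$ and \eqref{eq:nsstat} hold, Theorem~\ref{thm:bounds} applies to $\hat\gamma$ and yields $C_2'\in(0,\infty)$ with $\rho(n)\leq C_2'\hat\gamma(n)$ for all $n\geq0$; absorbing the constant $\max(C_0,b(0))$ gives (b).

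\textbf{The implication (b)$\Rightarrow$(a).} This is the substantive direction. Suppose $\rho(n)\leq C_2\gamma(n)$ for all $n\geq0$. First I would extract a pointwise bound on $b$ from the lower estimate \eqref{eq:nont}, which reads $\rho(k+1)\geq \l b(k+1+j^*)\rho(j^*)$ with $\rho(j^*)>0$ (established under \eqref{A1}, which is available here since \eqref{eq.S0} is a standing hypothesis). Thus for all $k\geq0$,
\[
b(k+1+j^*) \leq \frac{\rho(k+1)}{\l\rho(j^*)} \leq \frac{C_2}{\l\rho(j^*)}\gamma(k+1).
\]
Because $\gamma\in\mathcal{W}^\downarrow(r)$, the ratio $\gamma(k+1)/\gamma(k+1+j^*)\to r^{-j^*}$ as $k\to\infty$ (iterating the first limit in \eqref{eq:p2}), so $\gamma(k+1)\leq M\gamma(k+1+j^*)$ for some $M<\infty$ and all $k$; together with a finite adjustment for the first $j^*$ terms of $b$, this gives $b(n)\leq C_0\gamma(n)$ for all $n\geq1$ with $C_0 = MC_2/(\l\rho(j^*))$ (enlarged to cover small $n$). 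It remains to show $\l\sum_{j=1}^\infty b(j)r^{-j}<1$. The inequality $b(n)\leq C_0\gamma(n)$ together with the summability condition $\sum_i\gamma(i)r^{-i}<\infty$ in \eqref{eq:p2} shows that $S:=\l\sum_{j=1}^\infty b(j)r^{-j}$ is at least finite; the work is to show it is strictly less than $1$.

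\textbf{The main obstacle and how I would handle it.} Showing $S<1$ is the crux. The natural route is through the resolvent $z$ and the $Z$-transform identity $\psi(\lambda)=1-\l\sum_{j=1}^\infty b(j)\lambda^j$, $D(\lambda)=1/\psi(\lambda)=\sum_j z(j)\lambda^j$ from Lemma~\ref{prop:ztran}. By that lemma applied at radius $R=1/r$, the condition $S=\l\sum_{j=1}^\infty b(j)r^{-j}<1$ is \emph{equivalent} to $\sum_j z(j)r^{-j}<\infty$ (with $D$ well defined on $|\lambda|\leq 1/r$). So it suffices to prove $\sum_j z(j)r^{-j}<\infty$. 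To get this from the bound on $\rho$, I would pass through the relation $\rho(n) = \mathbb{E}[\nu(0)^2]\,\chi_z(n) = \mathbb{E}[\nu(0)^2]\sum_{j\geq0}z(j)z(j+n)$ of Theorem~\ref{thm:lgds3}, using $z\geq0$: since $z(0)=1$, dropping all terms but $j=0$ gives $z(n)\leq \rho(n)/\mathbb{E}[\nu(0)^2] \leq (C_2/\mathbb{E}[\nu(0)^2])\gamma(n)$, whence $\sum_j z(j)r^{-j}\leq (C_2/\mathbb{E}[\nu(0)^2])\sum_j\gamma(j)r^{-j}<\infty$ by \eqref{eq:p2}. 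This establishes $D$'s radius of convergence strictly exceeds $1/r$'s requirement only weakly; the equivalence in Lemma~\ref{prop:ztran}(1)$\Leftrightarrow$(2) then delivers exactly $S<1$, completing (a). I expect the only delicate point to be the careful bookkeeping of the finitely-many-terms adjustments and the verification that the various constants are finite and positive, together with confirming that $\mathbb{E}[\nu(0)^2]>0$ and $\rho(j^*)>0$ under the standing hypotheses — all of which are routine given \eqref{eq.S0}, \eqref{eq:nsstat}, and \eqref{A1}.
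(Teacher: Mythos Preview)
Your proposal is correct and follows essentially the same route as the paper. The paper (which in fact omits the proof) indicates precisely the two ingredients you use: (a)$\Rightarrow$(b) is Theorem~\ref{thm:bounds}, and for (b)$\Rightarrow$(a) the bound on $b$ comes from \eqref{eq:nont}, while $\l\sum_{j=1}^\infty b(j)r^{-j}<1$ is obtained from $z(n)\leq \rho(n)/\mathbb{E}[\nu(0)^2]\leq C_1\gamma(n)$ (your observation that $\chi_z(n)\geq z(0)z(n)=z(n)$) together with $\sum_j\gamma(j)r^{-j}<\infty$ and the equivalence in Lemma~\ref{prop:ztran} with $R=r^{-1}$. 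Your bookkeeping around $n=0$ is unnecessary since $b$ is defined only for $j\geq 1$, but harmless.
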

Theorem~\ref{thm:bounds} asserts that (a) implies (b). In the proof
that (b) implies (a) the resulting bound on $b$ is immediate from
\eqref{eq:nont}, while $\l\sum^{\infty}_{j=1}b(j)r^{-j}<1$ must
hold, as $z\leq C_1\g$, and so $\tilde{z}(r^{-1})<\infty$. Therefore
the proof of Theorem~\ref{thm:conub} is omitted.
\begin{theorem}\label{thm:lowb}
   Suppose that \eqref{eq:con1} and \eqref{eq:nsstat} hold and suppose that $\gamma\in\mathcal{W}^\downarrow(r)$ for $r\in (0,1]$.
   If there exists $C_0\in (0,\infty)$ such that $b(n)\geq C_0\g(n)$ for all $n\geq1$ then
   there exists $C_2\in (0,\infty)$ such that $\rho(n)\geq C_2\g(n)$ for all $n\geq0$.
\end{theorem}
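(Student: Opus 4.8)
The plan is to read the lower bound off directly from the elementary inequality \eqref{eq:nont}, which was established immediately above the statement, and then to transfer it from the shifted index $n+j^*$ back to the index $n$ using the ratio property in \eqref{eq:p2}.

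First I would note that, since $\g(n)>0$ for all $n$ by Definition~\ref{def:gensubexp}, the hypothesis $b(n)\geq C_0\g(n)$ for $n\geq1$ forces $b(1)>0$, so the minimal index $j^*$ of \eqref{A1} equals $1$. By the computation preceding \eqref{eq:nont} this already gives $\rho(1)=\rho(j^*)>0$; moreover, applying \eqref{eq:nont} with $j^*=1$ together with $b(n+1)\geq C_0\g(n+1)>0$ shows that $\rho(n)>0$ for every $n\geq1$, while $\rho(0)=\Var[X(0)]>0$ by \eqref{eq:archvar} under the standing hypothesis \eqref{eq.S0}. With $j^*=1$, inequality \eqref{eq:nont} reads $\rho(n)\geq \lambda_1\rho(1)b(n+1)$ for $n\geq1$, so the hypothesised lower bound on $b$ yields
\[
\rho(n)\ \geq\ \lambda_1\rho(1)C_0\,\g(n+1),\qquad n\geq1.
\]

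Next I would use that $\g\in\mathcal{W}^{\downarrow}(r)\subseteq\mathcal{W}(r)$, so the first part of \eqref{eq:p2} gives $\g(n-1)/\g(n)\to 1/r$, equivalently $\g(n+1)/\g(n)\to r>0$; hence there is $N\in\N$ with $\g(n+1)\geq \tfrac{r}{2}\g(n)$ for all $n\geq N$, whence $\rho(n)\geq \tfrac12\lambda_1\rho(1)C_0 r\,\g(n)$ for $n\geq N$. For the finitely many remaining indices $0\leq n<N$ both $\rho(n)$ and $\g(n)$ are strictly positive, so $C':=\min_{0\leq n<N}\rho(n)/\g(n)\in(0,\infty)$, and the choice $C_2:=\min\bigl(C',\ \tfrac12\lambda_1\rho(1)C_0 r\bigr)$ gives $\rho(n)\geq C_2\g(n)$ for all $n\geq0$, as required.

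I do not expect a genuine obstacle: this is the most direct result of Section~\ref{sect:bounds}, being essentially a one-line consequence of \eqref{eq:nont}. The two points that need a little care are (i) recording that $j^*=1$ and that $\rho$ is strictly positive on all of $\Z^{+}$, so that the finite initial segment can be absorbed into the constant, and (ii) invoking the ratio limit in \eqref{eq:p2}, rather than merely the eventual monotonicity of $\g$, to pass from a bound in terms of $\g(n+1)$ to one in terms of $\g(n)$; this is precisely why the constant $r$ survives in the estimate. Note that, unlike in Theorems~\ref{thm:bounds} and~\ref{thm:conub}, no contraction hypothesis $\lambda_1\sum_{j}b(j)r^{-j}<1$ is needed here: \eqref{eq:con1} and \eqref{eq:nsstat} enter only through Theorem~\ref{thm:lgds3}, to guarantee that $\rho$ exists and is given by \eqref{eq:covres}, and through \eqref{eq:archvar} to ensure $\rho(0)>0$.
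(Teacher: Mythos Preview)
Your proof is correct and follows exactly the route the paper indicates: the paper simply says the result ``is immediate from \eqref{eq:nont}'' and omits the proof, and you have carefully supplied the details---observing that the hypothesis forces $j^*=1$, reading off $\rho(n)\geq\lambda_1\rho(1)C_0\gamma(n+1)$, and then absorbing the index shift and the finite initial segment into the constant via \eqref{eq:p2}. There is nothing more to add.
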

The proof of Theorem~\ref{thm:lowb} is similarly omitted as it is
immediate from \eqref{eq:nont}. Combining the last two results gives
the main result of this section.
\begin{theorem}\label{thm:subsup}
Suppose that \eqref{eq:con1} and \eqref{eq:nsstat} hold and suppose that $\gamma\in\mathcal{W}^\downarrow(r)$ for $r\in (0,1]$.
Then the following are equivalent
\begin{itemize}
\item[(a)] $\l\sum^{\infty}_{j=1}b(j)r^{-j}<1$ and there exists $C_0^\ast\in (0,\infty)$ such that
\[
\limsup_{n\to\infty} \frac{b(n)}{\g(n)}=C_0^\ast;
\]
\item[(b)] There exists $C_2^\ast\in (0,\infty)$ such that
\[
\limsup_{n\to\infty} \frac{\rho(n)}{\g(n)}=C_2^\ast.
\]
\end{itemize}
\end{theorem}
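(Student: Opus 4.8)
The plan is to read Theorem~\ref{thm:subsup} as the $\limsup$-refinement of the pointwise equivalences of Theorems~\ref{thm:conub} and~\ref{thm:lowb}: the ``finiteness'' halves of the two $\limsup$'s will come directly from Theorem~\ref{thm:conub}, the ``positivity'' half in the direction (a)$\Rightarrow$(b) from the elementary bound~\eqref{eq:nont}, and the ``positivity'' half in the direction (b)$\Rightarrow$(a)---the only genuinely new ingredient---from the fact that a strictly faster-than-$\g$ decay of $b$ forces the same for $\rho$. I note first that the standing hypotheses of Theorem~\ref{thm:subsup} coincide with those of Theorems~\ref{thm:conub} and~\ref{thm:3b}, so both are available.

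For (a)$\Rightarrow$(b) I would proceed as follows. Since $\limsup_n b(n)/\g(n)=C_0^\ast<\infty$, one can choose $C_0\in(0,\infty)$ with $b(n)\le C_0\g(n)$ for all $n\ge1$ (dominate $C_0^\ast+1$ and the finitely many ratios before the threshold); together with $\l\sum_{j\ge1}b(j)r^{-j}<1$ this is hypothesis~(a) of Theorem~\ref{thm:conub}, which yields $\rho(n)\le C_2\g(n)$ for all $n\ge0$, so $\limsup_n\rho(n)/\g(n)\le C_2<\infty$. For the positive lower bound I would use~\eqref{eq:nont} in the form $\rho(m)\ge\l\,\rho(j^*)\,b(m+j^*)$ for $m\ge1$, with $\rho(j^*)>0$ as recorded before Theorem~\ref{thm:bounds}. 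Writing $b(m+j^*)/\g(m)=\bigl(b(m+j^*)/\g(m+j^*)\bigr)\bigl(\g(m+j^*)/\g(m)\bigr)$, noting that iterating the first limit in~\eqref{eq:p2} gives $\g(m+j^*)/\g(m)\to r^{j^*}>0$, and using shift-invariance of the $\limsup$ of the remaining factor, one obtains $\limsup_n\rho(n)/\g(n)\ge\l\,\rho(j^*)\,r^{j^*}C_0^\ast>0$. Combining, $C_2^\ast:=\limsup_n\rho(n)/\g(n)\in(0,\infty)$.

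For (b)$\Rightarrow$(a), finiteness of $C_2^\ast$ again gives $\rho(n)\le C_2\g(n)$ for all $n\ge0$, which is hypothesis~(b) of Theorem~\ref{thm:conub}; that theorem returns both $\l\sum_{j\ge1}b(j)r^{-j}<1$ and $b(n)\le C_0\g(n)$ for all $n\ge1$, so $\limsup_n b(n)/\g(n)\le C_0<\infty$, and it only remains to exclude $\limsup_n b(n)/\g(n)=0$. Here I would argue by contraposition: assuming $b(n)=o(\g(n))$, I would cast the resolvent equation~\eqref{eq:deltadiff2} into the form~\eqref{eq:scalarconvA} with $F(i)=\l b(i+1)$, $f\equiv0$, $x_0=z(0)=1$, check that $L_\g f=0$, that $L_\g F=\l\lim_n\bigl(b(n+1)/\g(n+1)\bigr)\bigl(\g(n+1)/\g(n)\bigr)=0$, and that $\sum_{i\ge0}r^{-(i+1)}|F(i)|=\l\sum_{j\ge1}b(j)r^{-j}<1$, so that Theorem~\ref{thm:3b} applies and delivers $L_\g z=0$, i.e.\ $z(n)=o(\g(n))$. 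Since $\g$ is asymptotic to a non-increasing sequence, $\g(j+n)\le C\g(n)$ uniformly in $j\ge0$ for all large $n$; hence, with $\eta_n:=\sup_{k\ge n}z(k)/\g(k)\to0$ and $\sum_{j\ge0}z(j)=1/(1-\l B)<\infty$, we get $\chi_z(n)=\sum_{j\ge0}z(j)z(j+n)\le C\eta_n\g(n)\sum_{j\ge0}z(j)=o(\g(n))$, and therefore $\rho(n)=\mathbb{E}[\nu(0)^2]\chi_z(n)=o(\g(n))$, contradicting $C_2^\ast>0$. Thus $\limsup_n b(n)/\g(n)=:C_0^\ast\in(0,\infty)$, which is~(a).

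I expect the last step to be the main obstacle. One cannot transfer the strict ($o(\g)$) improvement by feeding the pointwise bound $\rho\le C_2\g$ back into the infinite-delay Volterra equation~\eqref{eq:cov}, since convolving the ``head'' of $b$ against $\rho$ only produces an $O(\g(n))$ term; the argument must route through the resolvent $z$, whose equation~\eqref{eq:deltadiff2} pairs the \emph{fixed} data $z(0),z(1),\dots$ with the genuinely $o(\g)$ tail $b(n),b(n-1),\dots$, so that Theorem~\ref{thm:3b} closes the estimate, and then pass back to $\rho$ through $\rho=\mathbb{E}[\nu(0)^2]\chi_z$ using only summability of $z$ and the monotonicity built into $\mathcal{W}^{\downarrow}(r)$. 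The rest is routine bookkeeping with $\limsup$'s and the ratio asymptotics in~\eqref{eq:p2}.
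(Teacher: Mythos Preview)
Your proposal is correct and follows essentially the same approach as the paper's proof: both directions use \eqref{eq:nont} for the lower-bound halves, Theorem~\ref{thm:bounds}/\ref{thm:conub} for the upper-bound halves, and a contradiction argument routing $b=o(\gamma)$ through $z=o(\gamma)$ (via Theorem~\ref{thm:3b}) and then $\chi_z=o(\gamma)$ to reach $\rho=o(\gamma)$. If anything, your write-up is slightly more explicit than the paper's---you invoke Theorem~\ref{thm:conub} directly in (b)$\Rightarrow$(a) to secure $\l\sum_j b(j)r^{-j}<1$, and you unpack the contradiction step in detail rather than appealing to ``the proof of Theorem~\ref{thm:4}''---but the substance is the same.
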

\begin{remark}
    Theorem~\ref{thm:subsup} allows subsequences of $b$ to decay at rates faster than subexponentially, or indeed to be equal
    to zero. In this respect Theorem~\ref{thm:subsup} is different from the related result Theorem~\ref{thm:4}.
    Indeed the nature of the decay of $b$ may be quite erratic,
    yet providing that there is a subexponential decay which is an upper limiting bound for some subsequence of $b$
     then this limiting upper bound must be found in the autocovariance function and conversely.
\end{remark}
\begin{remark}
It is interesting to investigate what Theorem~\ref{thm:subsup} claims in the case when $r=1$. Suppose that
there is a stationary solution $X$ of \eqref{eq:1a}. Then Theorem~\ref{thm:lgds3} shows that conditions \eqref{eq:con1}
and \eqref{eq:nsstat} hold. If, from observation of the time series data, a subexponential sequence $\gamma$
is proposed for which $\limsup_{n\to\infty} \rho(n)/\gamma(n)\in (0,\infty)$, then Theorem~\ref{thm:subsup} shows that  $\limsup_{n\to\infty} b(n)/\gamma(n)\in (0,\infty)$.
\end{remark}
\begin{remark}
It is interesting to ask whether an analogue of
Theorem~\ref{thm:subsup} can be proven with the limit inferior in place
of the limit superior, for even though it is obvious from
\eqref{eq:nont} that $\liminf_{n\to\infty} b(n)/\gamma(n)>0$ implies
$\liminf_{n\to\infty} \rho(n)/\gamma(n)>0$, it is not so obvious
whether in general the converse holds. In Example~\ref{ex:sub0}
below, we demonstrate via a counterexample that this converse does
not hold in general. Therefore, it is also the case that the
converse of Theorem~\ref{thm:lowb} is not generally true.
\end{remark}
\begin{example}\label{ex:sub0} 
    Define the kernel $b$ so that it exhibits some periodicity:
    \[
    b(n) =
        \begin{cases}
            0, &\quad n/3\in\Z^+, \\
            n^{-2}, &\quad \text{otherwise}.
        \end{cases}
    \]
    Note that $\sum_{j=1}^\infty b(j)=4\pi^2/27$. Suppose that the sequence of shocks $\xi=\{\xi(n)\}_{n\in\Z}$ is such that
    $0<\l< 27/(4\pi^2)$, so that \eqref{eq:con1} holds.
    Following the techniques of \cite{jajd:pode} and the examples contained therein, we obtain
        \[
        \liminf_{n\to\infty}\frac{z(n)}{n^{-2}}=K\min\{d_0,d_1,d_2\}>0,
    \]
    where
    \[
        S_i = \l\sum_{n=0}^{\infty}b(3n+i+1), \quad i\in\{0,1,2\},
\]
        and
 \begin{align*}
        K &= \l/(1-S_0^3-3S_0S_1-S_1^3)^2, \\
        d_0 &= S_0^4+2S_1(1-S_0^3)+2S_0(1-S_1^3)+3(S_0^2+S_1^2)+S_1^4, \\
        d_1 &= 1+2S_0^3(1-S_1)+2S_1+2S_1^3+S_1^4+3S_0^2(1+S_1^2), \\
        d_2 &= 1+2S_1^3(1-S_0)+2S_0+2S_0^3+S_0^4+3S_1^2(1+S_0^2).
    \end{align*}
    Note that the denominator of $K$ is non--zero if $S_0>0$, $S_1>0$ and $S_0+S_1<1$.
    Similarly one may show that
    \[
        \liminf_{n\to\infty}\frac{\chi_{z}(n)}{n^{-2}}=\min\{c_0,c_1,c_2\}>0,
    \]
    where $\chi_z$ is defined by \eqref{def.chiz} and
    \begin{align*}
        c_0 &= d_0\sum_{j=0}^{\infty}z(3j)+d_1\sum_{j=0}^{\infty}z(3j+1) + d_2\sum_{j=0}^{\infty}z(3j+2), \\
        c_1 &= d_1\sum_{j=0}^{\infty}z(3j)+d_2\sum_{j=0}^{\infty}z(3j+1) + d_0\sum_{j=0}^{\infty}z(3j+2), \\
        c_2 &= d_2\sum_{j=0}^{\infty}z(3j)+d_0\sum_{j=0}^{\infty}z(3j+1) + d_1\sum_{j=0}^{\infty}z(3j+2).
    \end{align*}
    Noticing that $\sum_{j=1}^\infty b(j)^2=8\pi^4/729$, we see from Remarks~\ref{rem:oldstatcond} and \ref{rem:newstatcond} that if
    \[
\lambda_2 \frac{16\pi^4}{729} < 1+\max\left(0, \lambda_1^2 \frac{16\pi^4}{729}+2\left(1-\lambda_1 \frac{4\pi^2}{27}\right)^2 -1 \right),
    \]
    then  \eqref{eq:nsstat} also holds and one has $\liminf_{n\to\infty} \rho(n)/n^{-2}>0$.
    Therefore when the autocovariances of an ARCH($\infty$) process are observed to be
 bounded from below by a certain rate of decay, then it need \emph{not}
follow that this lower bounding rate of decay is present in $b$.

This example illustrates two further general points made earlier: first, in this example
$\limsup_{n\to\infty} b(n)/n^{-2}\in (0,\infty)$, and the above results confirm that
\[
\limsup_{n\to\infty} \rho(n)/n^{-2} = \mathbb{E}[\nu(0)^2]\max\{c_0,c_1,c_2\} \in (0,\infty),
\]
as claimed in Theorem~\ref{thm:subsup}.

Secondly, we notice from \eqref{eq:newcondbetter} that whenever $\lambda_1 < 9/(4\pi^2)$,
the condition \eqref{eq:con3}, which implies the stationarity of $X$, is weaker than condition \eqref{eq:con2}.
\end{example}

Using the subexponential bounds of Theorems~\ref{thm:conub} and \ref{thm:lowb}, we can weaken the hypothesis that $b$ is subexponential,
but still recover results on polynomial and ``superpolynomial'' decay of $\rho$. This is achieved at the expense of some
lost sharpness in characterising the asymptotic behaviour of $\rho$.
\begin{theorem}\label{thm:11}
Let \eqref{eq:con1} and \eqref{eq:nsstat} hold and $\beta\in\bigl\{(1,\infty)\cup\{\infty\}\bigr\}$.
        \begin{equation}\label{eq:logb}\tag{i}
        \text{If } \quad \lim_{n\to\infty}\frac{\log b(n)}{\log n} = -\beta \quad \text{ then }
        \quad \lim_{n\to\infty}\frac{\log \rho(n)}{\log n} = -\beta.
        \end{equation}
        \begin{equation}\label{eq:lnspb}\tag{ii}
            \limsup_{n\to\infty}\frac{\log b(n)}{\log n} = -\beta \quad  \text{ if and only if }
          \quad  \limsup_{n\to\infty}\frac{\log \rho(n)}{\log n} = -\beta.
        \end{equation}
\end{theorem}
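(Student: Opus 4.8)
The plan is to combine two ingredients: the upper-bound transfer of Theorem~\ref{thm:bounds} used with $r=1$, and the pointwise lower bound \eqref{eq:nont}, which --- writing $j^*$ for the minimal index with $b(j^*)>0$ --- reads $\rho(m)\ge\l\rho(j^*)\,b(m+j^*)$ for all $m\ge1$, with $\l\rho(j^*)\in(0,\infty)$. First I would recast the logarithmic hypotheses as polynomial sandwiches. For $\alpha>1$ put $\phi_\alpha(n):=(n+1)^{-\alpha}$; then $\phi_\alpha\in\mathcal{W}^{\downarrow}(1)$, being positive, non-increasing and asymptotic to $n^{-\alpha}\in\mathcal{W}(1)$. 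Using the convention $\log 0=-\infty$, the hypothesis $\limsup_{n\to\infty}\log b(n)/\log n=-\beta$ is equivalent to the conjunction of: (a) for every $\alpha\in(1,\beta)$ there is $C_0=C_0(\alpha)<\infty$ with $b(n)\le C_0\phi_\alpha(n)$ for all $n\ge0$ (the finitely many exceptional indices, and the gap between $n^{-\alpha}$ and $(n+1)^{-\alpha}$, being absorbed into $C_0$); and (b) for every $\alpha>\beta$, $b(n)\ge n^{-\alpha}$ for infinitely many $n$. The analogous statement holds with $\lim$ in place of $\limsup$ (then (b) holds eventually), and with $\rho$ in place of $b$.

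For part (i), upper estimate: fix $\alpha\in(1,\beta)$; by (a) and Theorem~\ref{thm:bounds} with $r=1$ (for which its standing hypotheses are exactly \eqref{eq:con1} and \eqref{eq:nsstat}) there is $C_2<\infty$ with $\rho(n)\le C_2\phi_\alpha(n)$ for all $n$, whence $\limsup_{n\to\infty}\log\rho(n)/\log n\le-\alpha$; letting $\alpha\uparrow\beta$ gives $\limsup_{n\to\infty}\log\rho(n)/\log n\le-\beta$. The matching lower estimate is needed only for finite $\beta$: then $b(n)>0$ for all large $n$, so by \eqref{eq:nont} $\rho(n)\ge\l\rho(j^*)b(n+j^*)>0$ eventually and
\[
\frac{\log\rho(n)}{\log n}\ge\frac{\log(\l\rho(j^*))}{\log n}+\frac{\log b(n+j^*)}{\log(n+j^*)}\cdot\frac{\log(n+j^*)}{\log n}\longrightarrow-\beta,
\]
so $\liminf_{n\to\infty}\log\rho(n)/\log n\ge-\beta$; together with the upper estimate this proves (i) for $\beta<\infty$, while for $\beta=\infty$ the upper estimate alone forces $\log\rho(n)/\log n\to-\infty$.

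For part (ii), the implication ``$b\Rightarrow\rho$'' uses the same two estimates: the upper one gives $\limsup\log\rho(n)/\log n\le-\beta$, while for the lower bound on the $\limsup$ one fixes $\alpha>\beta$, picks $n_k\to\infty$ with $b(n_k)\ge n_k^{-\alpha}$ (property (b)), and reads off from \eqref{eq:nont} that $\rho(n_k-j^*)\ge\l\rho(j^*)n_k^{-\alpha}$ for large $k$, so $\limsup\log\rho(n)/\log n\ge-\alpha$; letting $\alpha\downarrow\beta$ completes it. The $\beta=\infty$ case reduces to (i) via the fact that a $\limsup$ of $\log(\cdot)/\log n$ equal to $-\infty$ coincides with the corresponding limit.

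For ``$\rho\Rightarrow b$'': from \eqref{eq:nont}, $b(n+j^*)\le\rho(n)/(\l\rho(j^*))$, and comparing the eventual bound $\rho(n)\le n^{-\alpha'}$ (valid for each $\alpha'<\beta$) with $(n+j^*)^{-\alpha}$ for $\alpha<\alpha'$ yields $b(n)\le n^{-\alpha}$ eventually, i.e. $\limsup\log b(n)/\log n\le-\beta$ (and the same argument, with $\alpha'$ arbitrarily large, covers $\beta=\infty$). The reverse inequality is the one genuinely delicate step, which I would establish by contradiction: if $\limsup\log b(n)/\log n=-\beta'$ with $\beta'>\beta$ (possibly $\infty$), pick $\alpha\in(\beta,\beta')$; then (a) holds for this $\alpha$, Theorem~\ref{thm:bounds} gives $\rho(n)\le C_2\phi_\alpha(n)$, hence $\limsup\log\rho(n)/\log n\le-\alpha<-\beta$, contradicting the hypothesis on $\rho$. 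The only recurring nuisance throughout is the bookkeeping among $n^{-\alpha}$, $(n+1)^{-\alpha}$ and the constraint $\alpha>1$ needed for $\phi_\alpha\in\mathcal{W}^{\downarrow}(1)$, which is harmless since $\beta>1$.
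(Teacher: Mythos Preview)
Your proposal is correct and follows essentially the same route as the paper: translate the logarithmic hypotheses into polynomial sandwiches $b(n)\lessgtr C(n+1)^{-\alpha}$ with $\alpha$ near $\beta$, push the upper bound through Theorem~\ref{thm:bounds} (with $r=1$), handle the lower bound via \eqref{eq:nont} (which is exactly the content of Theorems~\ref{thm:conub} and~\ref{thm:lowb} that the paper invokes), and close the ``$\rho\Rightarrow b$'' direction of (ii) by the same contradiction argument the paper sketches. Your write-up simply fills in more of the $\epsilon$--$N$ bookkeeping than the paper's terse sketch does.
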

Once again, we notice that the equivalence of the existence of a stationary solution of \eqref{eq:1a} and the conditions \eqref{eq:con1} and \eqref{eq:nsstat} means that the
``polynomial--like'' decay in the autocovariance function
exhibited in Theorem~\ref{thm:11} is possible if and only if similar ``polynomial--like'' decay is present in $b$.

Theorem~\ref{thm:11} can be used to determine the asymptotic behaviour for kernels $b$ which are not covered by previous results.
We can find examples of kernels $b$ for which
\[
\lim_{n\to\infty} \frac{\log b(n)}{\log n}=-\beta, \quad b\not\in \mathcal{W}(1)
\]
and also $b$ for which
\[
\limsup_{n\to\infty} \frac{\log b(n)}{\log n}=-\beta, \quad
\lim_{n\to\infty} \frac{\log b(n)}{\log n} \text{ does not exist},
\quad b\not\in \mathcal{W}(1).
\]
An example of the former is  $b(n)=(2+\cos(n\pi))n^{-\beta}$ or $b(n)= n^{-\beta}\log (n+2) (2+ \sin(n+2))$ while an example of the latter is $b(n)=n^{-\beta+\sin(n)-1}$ for $n\geq 1$. All these examples are not subexponential sequences as they fail to satisfy the first condition of \eqref{eq:p2}.

\begin{remark}
    Example~\ref{ex:sub0} shows that the first implication in Theorem~\ref{thm:11} cannot be reversed,
    as $\lim_{n\to\infty} \log\rho(n)/\log n=-2$, but $\lim_{n\to\infty} \log b(n)/\log n$ does not exist.
\end{remark}
\begin{remark}
    Theorem~\ref{thm:subsup} can be applied  when $b(n)=(2+(-1)^n) n^{-1} (\log (n+2))^{-2}$
    with e.g., $\gamma(n)=(n+2)^{-1} (\log (n+2))^{-2}\in\mathcal{W}(1)$, by following
    an adaption of the proof of \cite[Proposition~3.3]{jadwr:2002b}. However, Theorem~\ref{thm:11} does \emph{not} apply to this sequence.
\end{remark}

Despite the last remark, one may prefer Theorem~\ref{thm:11} over Theorem~\ref{thm:subsup} if the goal is to fit
real--world data to an $\text{ARCH}(\infty)$ model. In practice, one may not be able to
establish a subexponential sequence to which the data is ``close". In particular, it may only be possible to
identify the exponent of polynomial decay ($-\beta\in (-\infty,-1)$ in Theorem~\ref{thm:11}) in $b$ and not any lower order
component (for example logarithmic or other more slowly varying factors). Such difficulties might render impossible the detection of
the precise form of the subexponential sequence to which the kernel is close, particularly for sequences such as
$b(n)=n^{-\beta+\sin(n)-1}$.

In the final result, we show that exponential decay of $b$ is both necessary and sufficient for
exponential decay of $\rho$. Thus we recover a special case of \cite[Theorem~3.1]{pkrl:2000}, which concerns
exponential decay of the autocovariance function, while using a different method of proof.
\begin{theorem}\label{thm:22}
Let \eqref{eq:con1} and \eqref{eq:nsstat} hold. Then the following are equivalent:
\begin{itemize}
\item[(a)] There exist $\alpha_1\in (0,1)$, $C_1\in (0,\infty)$ such that  $b(k)\leq
C_1\alpha_1^{k}$ for all $k\in \mathbb{Z}^+$;
\item[(b)] There exist $\alpha_2\in (0,1)$, $C_2\in (0,\infty)$ such that  $\rho(k)\leq
C_2\alpha_2^{k}$ for all $k\in \mathbb{Z}^+$.
\end{itemize}
\end{theorem}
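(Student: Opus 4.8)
The plan is to exploit the interplay between $\rho$, the resolvent $z$ from \eqref{eq:deltadiff2}, and the kernel $b$, using the formulae already established. The direction (a)$\Rightarrow$(b) follows the pattern of Theorem~\ref{thm:bounds} but in the simpler geometric setting. If $b(k)\leq C_1\alpha_1^k$, then choosing any $r\in(\alpha_1,1)$ sufficiently close to $1$ ensures $\sum_{j=1}^\infty b(j)r^{-j}<\infty$; one cannot immediately guarantee $\l\sum b(j)r^{-j}<1$, but one may instead work directly with the resolvent equation \eqref{eq:deltadiff2}. The standard comparison argument for Volterra difference equations (see~\cite{se:1996, ElMur:1996a}) shows that if $b$ is geometrically bounded then $z$ is geometrically bounded: indeed $\tilde z(\lambda)=1/\psi(\lambda)$ and $\psi$ has its nearest zero at some $|\lambda_0|>1$ (guaranteed since $\l B<1$ forces $\psi(1)>0$ and $\psi$ is analytic on a disc of radius $>1$ because $b$ decays geometrically), so $z(k)\leq \tilde C\,\tilde\alpha^k$ for any $\tilde\alpha\in(1/|\lambda_0|,1)$. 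Then from \eqref{def.chiz}, $\rho(k)$ is a constant multiple of $\chi_z(k)=\sum_{j\geq 0}z(j)z(j+k)\leq \tilde C^2\tilde\alpha^k\sum_{j\geq0}\tilde\alpha^{2j}$, which is $\leq C_2\tilde\alpha^k$. This gives (b).

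For the converse (b)$\Rightarrow$(a), the inequality \eqref{eq:nont} does the work immediately, exactly as noted for Theorem~\ref{thm:lowb}. From $\rho(k+1)\geq \l b(k+1+j^*)\rho(j^*)$ with $\rho(j^*)>0$, we get $b(k+1+j^*)\leq \rho(k+1)/(\l\rho(j^*))\leq (C_2/(\l\rho(j^*)))\,\alpha_2^{k+1}$, and absorbing the shift by $j^*+1$ into the constant yields $b(n)\leq C_1\alpha_1^n$ for all $n$ with $\alpha_1=\alpha_2$ (and adjusting the constant to cover the finitely many indices $n\leq j^*$, where $b$ is bounded anyway). Hence (b) implies (a).

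The main obstacle is making the (a)$\Rightarrow$(b) step fully rigorous without assuming an analogue of the stability hypothesis $\l\sum b(j)r^{-j}<1$ that appears in Theorem~\ref{thm:bounds}; here we only know $\l B<1$. The resolution is to locate the zeros of $\psi(\lambda)=1-\l\sum_{j\geq1}b(j)\lambda^j$. Since $b(k)\leq C_1\alpha_1^k$, the series $\sum_{j\geq 1}b(j)\lambda^j$ converges and $\psi$ is analytic on $\{|\lambda|<1/\alpha_1\}$, which strictly contains the closed unit disc. On $|\lambda|\leq 1$ we have $|\l\sum_{j\geq1}b(j)\lambda^j|\leq \l B<1$, so $\psi$ has no zeros there; by continuity $\psi$ is bounded away from zero on a slightly larger closed disc $\{|\lambda|\leq \varrho\}$ for some $\varrho\in(1,1/\alpha_1)$. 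Then $D(\lambda)=1/\psi(\lambda)$ is analytic on that disc, so its Taylor coefficients $z(k)$ satisfy $z(k)\leq C\varrho^{-k}$ by Cauchy's estimate, giving a geometric bound with ratio $1/\varrho<1$. This replaces the $\mathcal W(r)$ machinery entirely and closes the argument; the rest is routine bookkeeping with constants.
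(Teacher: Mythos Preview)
Your proof is correct and the direction (b)$\Rightarrow$(a) is identical to the paper's, using \eqref{eq:nont} and adjusting constants.

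For (a)$\Rightarrow$(b) you take a genuinely different route. The paper first invokes \cite[Theorem~4]{ElMur:1996a} as a black box to deduce that a geometric bound on $b$ is equivalent to a geometric bound on $z$; it then bounds $\rho$ not through the explicit representation \eqref{eq:covres} but through the variation--of--parameters formula \eqref{eq:varpar}, estimating the perturbation $f(k)=\l\sum_{j\geq 1}b(k+j+1)\rho(j)$ geometrically (using summability of $\rho$) and convolving the geometric bounds on $z$ and $f$. You instead prove the geometric bound on $z$ directly by a complex--analytic argument---analyticity of $\psi$ on a disc of radius $1/\alpha_1>1$, nonvanishing on $|\lambda|\leq 1$ by $\l B<1$, hence nonvanishing on a slightly larger disc by continuity, then Cauchy estimates on $D=1/\psi$---and then pass to $\rho$ via the explicit formula $\rho\propto\chi_z$ from \eqref{eq:covres}, which is immediate once $z$ is geometric. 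Your argument is more self--contained (it essentially reproves the relevant direction of the Elaydi--Murakami result rather than citing it) and bypasses the Volterra machinery entirely; the paper's route, by contrast, is deliberately chosen to showcase the usefulness of the representation \eqref{eq:varpar}, as flagged earlier in Section~\ref{sec:autocov}. Both buy the same conclusion; yours is shorter, the paper's is more in keeping with its admissibility--theory theme.
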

\section{Proofs}\label{sect:proofs}
Proposition~\ref{prop:1} necessitates that interchange of an
infinite summation and an expectation sign. This interchange is made
rigorous via standard application of the Monotone--Convergence
Theorem (cf. e.g.,~\cite[Theorem~5.3]{Williams}).
\begin{proof}[Proof of Proposition~\ref{prop:1}]
Firstly observe that the identity $\rho(k)=\rho(-k)$, for all
$k\in\Z$ holds for the autocovariance function.
Now, for $k>0$ we have
\begin{align*}
    \rho(-k) &=  \Cov[X(n),X(n-k)]= \Cov[a\xi(n)+\sum_{j=1}^{\infty}b(j)X(n-j)\xi(n),X(n-k)] \\
    &= a\,\Cov[\xi(n),X(n-k)] + \sum_{j=1}^{\infty}b(j)\Cov[X(n-j)\xi(n),X(n-k)] \\
    &= 0 + \l\sum_{j=1}^{\infty}b(j)\Cov[X(n-j),X(n-k)] = \l\sum_{j=1}^{\infty}b(j)\rho(k-j).
\end{align*}
The result follows due to the symmetry of the autocovariance
function.
\end{proof}
\begin{proof}[Proof of Lemma~\ref{prop:ztran}]
Firstly we note that $\lambda_1\sum_{j=1}^{\infty}b(j)R^j<+\infty$ ensures that $\psi(\lambda)$ is finite
in the region $|\lambda|\leq R$.

Suppose now that $\lambda_1\sum_{j=1}^{\infty}b(j)R^j<1$. Let $|\lambda|\leq R$.
Define $\Lambda:=\lambda/R$, so that $|\Lambda|\leq 1$. Also, define the sequence $\psi^*$ by $\psi^*_0=1$, $\psi^*_j=-\l b(j)R^j$
for $j\geq1$. Therefore $\sum_{j=0}^{\infty}|\psi^*_j|<+\infty$. Consequently, we may define
$\psi^*(\Lambda)=\sum_{j=0}^{\infty}\psi^*_j\Lambda^j$ for $|\Lambda|\leq 1$. Furthermore, for $|\Lambda|\leq 1$, we may use the non--negativity
of $b$ to get
    \begin{align*}
    |\psi^*(\Lambda)|&=
     |1-\l\sum_{j=1}^{\infty}b(j)R^j\Lambda^j|
    \geq 1-\l\sum_{j=1}^{\infty}b(j)R^j >0.
  \end{align*}
 Hence we may apply Lemma~\ref{lm:inver} to $\psi^\ast$, so that there exists a summable sequence $z^*=\{z^*(j):j\in\Z^+\}$ such that
 $1/\psi^*(\Lambda) = \sum_{j=0}^{\infty}z^*(j)\Lambda^j$ for $|\Lambda|\leq 1$. Therefore, for $|\lambda|\leq R$ we have
    \[
        \frac{1}{\psi(\lambda)}=\frac{1}{\psi^*(\Lambda)} = \frac{1}{\sum_{j=0}^{\infty}\psi^*_j\Lambda^j} = \sum_{j=0}^{\infty}z^*(j)\Lambda^j
        = \sum_{j=0}^{\infty}z^*(j)R^{-j}\lambda^j.
    \]
    Therefore
    \[
        \sum_{j=0}^{\infty}z^*(j)R^{-j}\lambda^j \sum_{k=0}^{\infty}\psi_k^* R^{-k}\lambda^k = 1, \quad |\lambda|\leq R.
    \]
    Note that when $R=1$, we have $z^*=z$ in the notation of Lemma~\ref{lm:inver}.
    Rearranging gives
    \begin{align*}
        \sum_{l=0}^{\infty}\sum_{j=0}^{l}\psi^*_{l-j}z^*(j)R^{-l}\lambda^l =1.
    \end{align*}
    Now comparing powers of $\lambda$ on both sides of this equality gives
    \begin{align}\label{eq:deltadiff3}
        \psi^*_0 z^*(0)=1 , \quad
        z^*(n)  = -\sum_{j=0}^{n-1}\psi^*_{n-j}z^*(j), \quad n\geq1.
    \end{align}
    Rearranging the second equation gives
    \[
        R^{-n}z^*(n) = \lambda_1\sum_{j=0}^{n-1}b(n-j)R^{-j}z^*(j), \quad n\geq 1.
    \]
    Observe that if $R=1$, $z^*$ satisfies \eqref{eq:deltadiff2}. Define $w(n)=R^{-n}z^*(n)$ for $n\geq0$. Then,
    by the uniqueness of the solution of \eqref{eq:deltadiff2}, it is seen that $w(n)=z(n)$, $n\geq0$ and so $z^*(n)=R^n z(n)$, $n\geq0$.
    Hence
        $1/ \psi(\lambda) = \sum_{j=0}^{\infty}z(j)\lambda^j ,\quad |\lambda|\leq R$
    and $\sum_{j=0}^{\infty}z(j)R^j<+\infty$.

Conversely, suppose that $z$ is defined by \eqref{eq:deltadiff2} and that $\sum_{j=0}^{\infty}z(j)R^{j}<+\infty$. Multiplying across \eqref{eq:deltadiff2}
by $R^{n}$ and summing gives
\[
\sum_{n=1}^\infty z(n)R^{n}=\lambda_1 \sum_{n=1}^\infty \sum_{j=0}^{n-1} b(n-j)R^{n-j}R^jz(j).
\]
Since the summand on the righthand side is non--negative, the order of summation may be exchanged to give
\[
    \sum_{n=0}^{\infty}z(n)R^{n} = 1+\l\sum_{j=1}^{\infty}b(j)R^{j}\sum_{n=0}^{\infty}z(n)R^{n}.
\]
Now, since $\sum_{n=0}^{\infty}z(n)R^{n} \in [1,\infty)$, it follows that $\l\sum_{j=1}^{\infty}b(j)R^{j}$ is finite, and moreover
the identity can be rearranged to give
\[
\l\sum_{j=1}^{\infty}b(j)R^{j}=\frac{\sum_{n=0}^{\infty}z(n)R^{n}-1}{\sum_{n=0}^{\infty}z(n)R^{n}}\in [0,1),
\]
as required.
\end{proof}
\subsection{Rates}
It is obvious from \eqref{eq:deltadiff2}
that if $\l\sum_{j=1}^{\infty}b(j)r^{-j}<1$ then
    \[
      \sum_{j=0}^{\infty}z(j)r^{-j}=\frac{1}{1-\l\sum_{j=1}^{\infty}b(j)r^{-j}}<+\infty
  \]
and trivially $\sum_{j=0}^{\infty}z(j)r^j<\infty$ and $\l\sum_{j=1}^{\infty}b(j)r^j<1$ for $r\in (0,1]$.

\begin{lemma}\label{lm:basyd}
        If $b\in\mathcal{W}(r)$ and $\l\sum_{j=1}^{\infty}b(j)r^{-j}<1$, then
        \[
            \lim_{n\to\infty}\frac{z(n)}{b(n)} = \frac{\l}{(1-\l\sum_{j=1}^{\infty}b(j)r^{-j})^2}.
        \]
    \end{lemma}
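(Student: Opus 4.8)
The plan is to recognise the resolvent equation \eqref{eq:deltadiff2} as a special case of the scalar convolution equation \eqref{eq:scalarconvA} and then apply Theorem~\ref{thm:3b} with the weight sequence $\gamma=b$. First I would shift the index $n\mapsto n+1$ in \eqref{eq:deltadiff2} to obtain
\[
z(n+1)=\l\sum_{i=0}^{n}b(n+1-i)z(i),\qquad n\geq 0,\quad z(0)=1,
\]
so that \eqref{eq:deltadiff2} is exactly \eqref{eq:scalarconvA} with $x(n)=z(n)$, $x_0=1$, $f\equiv 0$ and $F(k)=\l b(k+1)$ for $k\geq 0$. Since $F\geq 0$, the stability hypothesis \eqref{eq:thm3a} becomes $\sum_{i=0}^{\infty}r^{-(i+1)}\l b(i+1)=\l\sum_{j=1}^{\infty}b(j)r^{-j}<1$, which is precisely the standing assumption of the lemma.

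Next I would check the remaining hypotheses of Theorem~\ref{thm:3b} for $\gamma=b\in\mathcal{W}(r)$. Clearly $L_b f=0$. By the first limit in \eqref{eq:p2} one has $\lim_{n\to\infty}b(n+1)/b(n)=r$, hence $L_b F=\lim_{n\to\infty}\l b(n+1)/b(n)=\l r$ exists. Therefore Theorem~\ref{thm:3b} applies and expresses $\lim_{n\to\infty}z(n)/b(n)=L_b z$ through \eqref{eq:thm3b}. To evaluate the constants there, note that the same index shift gives $\sum_{i=0}^{\infty}r^{-i}F(i)=\l\sum_{i=0}^{\infty}r^{-i}b(i+1)=\l r\sum_{j=1}^{\infty}b(j)r^{-j}$, so $r-\sum_{i=0}^{\infty}r^{-i}F(i)=r\bigl(1-\l\sum_{j=1}^{\infty}b(j)r^{-j}\bigr)$; and, as recorded immediately before the statement of the lemma (consistently also with \eqref{eq:xtilde}), $\sum_{j=0}^{\infty}z(j)r^{-j}=\bigl(1-\l\sum_{j=1}^{\infty}b(j)r^{-j}\bigr)^{-1}$.

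Substituting $L_b f=0$, $L_b F=\l r$, and these two values into \eqref{eq:thm3b}, the factor $r$ cancels and one is left with $L_b z=\l/\bigl(1-\l\sum_{j=1}^{\infty}b(j)r^{-j}\bigr)^{2}$, which is the assertion. I do not expect a genuine obstacle here: the argument is essentially a bookkeeping exercise, and the only point requiring care is the extra factor of $r$ produced by the index shift when passing from $b$ to $F(\cdot)=\l b(\cdot+1)$ in the $r$-weighted sums appearing in \eqref{eq:thm3b}, together with the reuse of the previously computed value of $\sum_{j\geq 0}z(j)r^{-j}$.
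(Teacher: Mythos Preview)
Your proposal is correct and is exactly the approach taken in the paper, which simply records ``Apply Theorem~\ref{thm:3b} to \eqref{eq:deltadiff2}''; you have merely filled in the bookkeeping that the paper omits. The computations with the index shift, the verification of \eqref{eq:thm3a}, and the evaluation of $L_bF=\l r$ and $\sum_{j\geq 0}z(j)r^{-j}$ via \eqref{eq:xtilde} are all carried out correctly.
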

\begin{proof}[Proof of Lemma~\ref{lm:basyd}]
    Apply Theorem~\ref{thm:3b} to \eqref{eq:deltadiff2}.
\end{proof}

\begin{lemma}\label{lm:dasycov}
If $b\in\mathcal{W}^{\downarrow}(r)$ for $r\in(0,1]$, $\l\sum_{j=1}^{\infty}b(j)r^{-j}<1$, and $\chi_z$ is defined by
\eqref{def.chiz}, then
    \[
        \lim_{k\to\infty}\frac{\chi_{z}(k)}{z(k)} = \frac{1}{1-\l\sum_{j=1}^{\infty}b(j)r^j}.
    \]
\end{lemma}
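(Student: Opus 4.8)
The plan is to combine Lemma~\ref{lm:basyd} with the resolvent identity \eqref{eq:deltadiff2}. First I would assemble the facts we need about $z$. By Lemma~\ref{lm:basyd}, $z(n)/b(n)\to c_1:=\l/\bigl(1-\l\sum_{j\ge 1}b(j)r^{-j}\bigr)^2\in(0,\infty)$, so $z(n)\sim c_1 b(n)$; since $b\in\mathcal{W}^{\downarrow}(r)$ this yields $z(n-1)/z(n)\to 1/r$ (equivalently $z(k+1)/z(k)\to r$) and shows that $z$ is asymptotic to a positive non-increasing sequence $\gamma$ (take $\gamma=c_1\gamma_0$ where $\gamma_0$ is the monotone envelope of $b$ furnished by membership in $\mathcal{W}^{\downarrow}(r)$). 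As recorded just before Lemma~\ref{lm:basyd}, the hypothesis $\l\sum_{j\ge 1}b(j)r^{-j}<1$ gives $\sum_{j\ge 0}z(j)r^{-j}=1/(1-\l\sum_{j\ge 1}b(j)r^{-j})<\infty$, and in particular $\sum_{j\ge 0}z(j)<\infty$ since $r\le 1$. Multiplying \eqref{eq:deltadiff2} by $r^{n}$, summing over $n\ge 1$ and exchanging the order of summation (legitimate as all terms are non-negative), exactly as in the proof of Lemma~\ref{prop:ztran} with $R=r$, yields
\[
\sum_{j=0}^{\infty}z(j)r^j=\frac{1}{1-\l\sum_{j=1}^{\infty}b(j)r^j},
\]
which is the candidate limit.

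Next, for $k$ large enough that $z(k)>0$, write $\chi_z(k)/z(k)=\sum_{j=0}^{\infty}z(j)\,z(j+k)/z(k)$, and the aim is to pass the limit $k\to\infty$ through the sum. The pointwise limit is immediate: for fixed $j$, $z(j+k)/z(k)=\prod_{i=0}^{j-1}z(k+i+1)/z(k+i)$ is a product of $j$ factors each converging to $r$, hence $z(j+k)/z(k)\to r^j$ as $k\to\infty$. It therefore suffices to dominate $z(j)\,z(j+k)/z(k)$ by a summable sequence uniformly in $k$.

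This uniform domination is the main obstacle, and it is where the monotone envelope is essential: for $r<1$ one could bound $z(k+j)/z(k)$ by a geometric $\theta^{j}$ with $\theta\in(r,1)$, but this breaks down at $r=1$. Instead I would argue uniformly in $r\in(0,1]$: fix $\epsilon\in(0,1)$ and $N$ with $|z(n)/\gamma(n)-1|<\epsilon$ for all $n\ge N$; then for $k\ge N$ and every $j\ge 0$,
\[
\frac{z(j+k)}{z(k)}=\frac{z(j+k)/\gamma(j+k)}{z(k)/\gamma(k)}\cdot\frac{\gamma(j+k)}{\gamma(k)}\le\frac{1+\epsilon}{1-\epsilon},
\]
using that $\gamma$ is non-increasing. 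Hence $z(j)\,z(j+k)/z(k)\le\frac{1+\epsilon}{1-\epsilon}\,z(j)$, a bound summable in $j$ and independent of $k\ge N$. The dominated convergence theorem for series then gives $\lim_{k\to\infty}\chi_z(k)/z(k)=\sum_{j=0}^{\infty}z(j)r^j=1/(1-\l\sum_{j=1}^{\infty}b(j)r^j)$, which completes the proof.
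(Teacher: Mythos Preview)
Your proof is correct and follows essentially the same strategy as the paper: establish a uniform-in-$j$ bound on $z(k+j)/z(k)$ for large $k$ via the monotone envelope coming from $b\in\mathcal{W}^{\downarrow}(r)$, and then pass the limit through the sum. The paper frames the passage to the limit via an explicit split at $M$ followed by $M\to\infty$ (rather than invoking dominated convergence by name), and it treats $r<1$ separately using the geometric bound $z(n+j)/z(n)<r^{-j}$ together with the summability of $z(j)r^{-j}$, reserving the monotone-envelope argument for $r=1$; your version is a mild streamlining in that it handles both regimes at once with the single bound $z(k+j)/z(k)\le(1+\epsilon)/(1-\epsilon)$ and the summability of $z$ itself.
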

\begin{proof}[Proof of Lemma~\ref{lm:dasycov}]
    Firstly, note that $\l\sum_{j=1}^{\infty}b(j)r^{-j}<1$ gives $\sum_{j=0}^{\infty}z(j)r^{-j}<+\infty$.
    Consider the case $r<1$. Then for any fixed $M\geq 2$ we have
    \[
        \left|\frac{\chi_{z}(n)}{z(n)}-\sum_{j=0}^{\infty}z(j)r^{j}\right|
        \leq \sum_{j=0}^{M-1}z(j)\left|\frac{z(n+j)}{z(n)}-r^j\right|
        + \sum_{j=M}^{\infty}z(j)\frac{z(n+j)}{z(n)} + \sum_{j=M}^{\infty}z(j)r^j.
    \]
    Let $\epsilon\in(0,1)$ be such that $r<r(1+\epsilon)<1<r^{-1}$. By Lemma~\ref{lm:basyd}
there is an $N(\epsilon)\in\Z^+$ such that
        $z(n+1)/z(n)<r(1+\epsilon)<1$ for all $n\geq N(\epsilon)$.
    Hence for $j\geq1$, $z(n+j)/z(n) < r^j(1+\epsilon)^j < r^{-j}$ for all $n\geq N(\epsilon)$. Thus for $n\geq N(\epsilon)$,
    \[
        \left|\frac{\chi_{z}(n)}{z(n)}-\sum_{j=0}^{\infty}z(j)r^{j}\right|
        \leq 2\sum_{j=M}^{\infty}z(j)r^{-j} + \sum_{j=0}^{M-1}z(j)\left|\frac{z(n+j)}{z(n)}-r^j\right|.
    \]
    Since $\lim_{n\to\infty} z(n+j)/z(n) = r^j$, we have
    \[
        \limsup_{n\to\infty}\left|\frac{\chi_{z}(n)}{z(n)}-\sum_{j=0}^{\infty}z(j)r^{j}\right|
            \leq 2\sum_{j=M}^{\infty}z(j)r^{j}.
    \]
    Finally, letting $M\to\infty$ gives the desired result for $r<1$.

    For the case $r=1$, we split the sums in the same manner as above. From Lemma~\ref{lm:basyd} we have
    that $z\in\mathcal{W}(1)$. Then we use the asymptotic monotonicity of $b$ to bound $z(n+j)/z(n)$.
We have for $n\geq N_1$, for some $N_1$ sufficiently large
    \[
        \lim_{n\to\infty}\frac{z(n)}{b(n)}= L\in(0,\infty),
        \quad \frac{b(n+j)}{b(n)}\leq\frac{b(n+j)}{\gamma(n+j)}\cdot\frac{\gamma(n)}{b(n)}\leq 2\cdot2 \text{ for all } j\geq1.
    \]
    where $\gamma$ is the non--increasing sequence which is asymptotic to $b$. Thus for
$n\geq N_1$
    \begin{align*}
        \frac{z(n+j)}{z(n)} = \frac{z(n+j)}{b(n+j)}\cdot\frac{b(n+j)}{b(n)}\cdot\frac{b(n)}{z(n)}
        \leq 2L\frac{b(n+j)}{b(n)}\frac{1}{L}2 \leq 2^4.
    \end{align*}
The result follows through as before.
\end{proof}
\begin{proof}[Proof of Theorems~\ref{thm:4} and~\ref{thm:6}]
    Theorem~\ref{thm:6} and the second limit in Theorem~\ref{thm:4} are an immediate consequence
 of Lemmas~\ref{lm:basyd} and~\ref{lm:dasycov} with
\eqref{eq:nsstat} being required to guarantee that
$\mathbb{E}[\nu(0)^2]$ is well defined and finite.

Turning to the first limit formula in Theorem~\ref{thm:4}, from Lemma~\ref{lm:dasycov} we have that
$\rho\in\mathcal{W}(1)$ and hence
$\sum_{j=0}^{\infty}\rho(j)<\infty$. From \eqref{eq:cov2} we
have
    \begin{equation}\label{eq:covcon}
        \rho(n+1) = \l\sum_{j=0}^{n}b(n-j+1)\rho(j)+f(n),
    \end{equation}
    where
$f(n) = \l\sum_{j=1}^{\infty}b(n+j+1)\rho(j)$.
    Letting $F(n)=\l b(n+1)$ we can then apply Theorem~\ref{thm:3b} to get a representation
 for $L_{b}\rho$, providing that $L_{\gamma}f$ and $L_{\gamma}F$ both
exist, and that $\sum_{j=0}^{\infty}F(j)<1$. We have the
last condition by assumption. To prove that $L_\gamma F$ exists, note that
    \[
        \lim_{n\to\infty}\frac{F(n)}{\g(n)} = \lim_{n\to\infty}\frac{\l b(n+1)}{\g(n)}
        = \lim_{n\to\infty}\frac{\l b(n+1)}{\g(n+1)}\frac{\g(n+1)}{\g(n)}= \l.
    \]
As to the existence of $L_\gamma f$, we fix $M\in\Z^+$, and make the estimate
    \begin{multline*}
        \left|\frac{f(n)}{\g(n)}-\l\sum_{j=1}^{\infty}\rho(j)\right|
        \leq \l\sum_{j=1}^{M}\left|\frac{b(n+j+1)}{\g(n)}-1\right|\rho(j)
        \\+ \l\sum_{j=M+1}^{\infty}\frac{b(n+1+j)}{\g(n)}\rho(j) +
        \l\sum_{j=M+1}^{\infty}\rho(j).
    \end{multline*}
    For the second term on the right hand side we have
    \[
        \frac{b(n+1+j)}{\g(n)} = \frac{b(n+1+j)}{\g(n+1+j)}\frac{\g(n+1+j)}{\g(n)}
        \leq 2,
    \]
    for all $n\geq N_0$ and some $N_0$ sufficiently large.
    Thus for $n\geq N_0$,
    \[
        \left|\frac{f(n)}{\g(n)}-\l\sum_{j=1}^{\infty}\rho(j)\right|
        \leq 3\l\sum_{j=M+1}^{\infty}\rho(j) + \l\sum_{j=1}^{M}\left|\frac{b(n+j+1)}{\g(n)}-1\right|\rho(j).
    \]
    Then
    \[
        \limsup_{n\to\infty}\left|\frac{f(n)}{\g(n)}-\l\sum_{j=1}^{\infty}\rho(j)\right|
        \leq 3\l\sum_{j=M+1}^{\infty}\rho(j).
    \]
    Letting $M\to\infty$ gives $L_\gamma f=\l\sum_{j=1}^{\infty}\rho(j)$.

    Thus we may apply Theorem~\ref{thm:3b}, which gives that $L_{b}\rho=L_{\g}\rho$ exists.
    Applying~\cite[Theorem~4.3 ]{jaigdr:2006} to \eqref{eq:covcon} gives
    \[
        L_{b}\rho = \frac{\l\sum_{j=0}^{\infty}\rho(j) + \l\sum_{j=1}^{\infty}\rho(j)}{1-\l\sum_{j=1}^{\infty}b(j)}.
    \]
    Using the symmetry of the autocovariance function, i.e., $\rho(n)=\rho(-n)$ for all $n\in\Z$, gives \eqref{eq:rholim1}
    as required.
\end{proof}
We provide a partial converse to Lemma~\ref{lm:basyd}, i.e., that $z\in\mathcal{W}(r)$
implies $b\in\mathcal{W}(r)$. To do so, we state without proof a variant of~\cite[Theorem~3.7]{jajd:pode}.
The proof of this consists of rewriting \eqref{eq:deltadiff2} so that the roles of $b$ and $z$ are
interchanged, and by then applying Theorem~\ref{thm:3b}.
\begin{lemma}\label{lm:bres}
    Let $z$ be the sequence which satisfies \eqref{eq:deltadiff2},
    $z\in\mathcal{W}(r)$ and further suppose that
    \begin{equation}\label{eq:bhalf}
        \l\sum_{j=1}^{\infty}b(j)r^{-j}<\frac{1}{2}.
    \end{equation}
    Then
    \[
        \lim_{n\to\infty}\frac{b(n)}{z(n)}= \frac{1}{\l\left(\sum_{j=0}^{\infty}z(j)r^{-j}\right)^2}.
    \]
\end{lemma}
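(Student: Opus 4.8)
The plan is to read \eqref{eq:deltadiff2} as a linear Volterra summation equation of the form \eqref{eq:scalarconvA}, but \emph{solved for $b$}, with all the data built out of the (now known) sequence $z$, and then to invoke Theorem~\ref{thm:3b} with weight $\gamma=z$. First I would rearrange \eqref{eq:deltadiff2}: separating the $j=0$ term and using $z(0)=1$ gives, for $n\geq1$,
\[
\l b(n)=z(n)-\l\sum_{i=1}^{n-1}z(n-i)b(i).
\]
Setting $x(m):=b(m+1)$ for $m\geq0$ and reindexing the sum puts this in the exact form \eqref{eq:scalarconvA}, namely
\[
x(n+1)=f(n)+\sum_{i=0}^{n}F(n-i)x(i),\quad n\geq0;\qquad x(0)=b(1)=z(1)/\l,
\]
with $F(k)=-z(k+1)$ and $f(n)=z(n+2)/\l$. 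The only thing requiring care here is the index bookkeeping and the initial condition.

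Next I would verify the hypotheses of Theorem~\ref{thm:3b} with $\gamma=z$, which lies in $\mathcal{W}(r)$ by assumption. Since $z\in\mathcal{W}(r)$ we have $z(n+1)/z(n)\to r$, so $L_z F=\lim_n(-z(n+1)/z(n))=-r$ and $L_z f=\lim_n z(n+2)/(\l z(n))=r^2/\l$ both exist. The summability condition \eqref{eq:thm3a} becomes
\[
\sum_{i=0}^{\infty}r^{-(i+1)}|F(i)|=\sum_{j=1}^{\infty}r^{-j}z(j)=\tilde z(r^{-1})-1,
\]
and this is precisely where \eqref{eq:bhalf} is used: by the identity $\tilde z(r^{-1})=\sum_{j\geq0}z(j)r^{-j}=\bigl(1-\l\sum_{j\geq1}b(j)r^{-j}\bigr)^{-1}$ recorded just before Lemma~\ref{lm:basyd}, condition \eqref{eq:bhalf} forces $\tilde z(r^{-1})<2$, i.e. $\tilde z(r^{-1})-1<1$ as required. (All the infinite sums that appear are finite, since $z\in\mathcal{W}(r)$ gives $\sum_j z(j)r^{-j}<\infty$ and \eqref{eq:bhalf} gives $\sum_j b(j)r^{-j}<\infty$.)

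Theorem~\ref{thm:3b} then yields that $L_z x$ exists, with value prescribed by \eqref{eq:thm3b}--\eqref{eq:xtilde}. Using $\sum_{i\geq0}r^{-i}F(i)=-r(\tilde z(r^{-1})-1)$ one obtains $r-\sum_{i\geq0}r^{-i}F(i)=r\,\tilde z(r^{-1})$; evaluating $\sum_{j\geq0}r^{-j}x(j)$ via \eqref{eq:xtilde} and then using the algebraic identity $1-(\tilde z(r^{-1})-1)/\tilde z(r^{-1})=1/\tilde z(r^{-1})$ collapses \eqref{eq:thm3b} to $L_z x=r/(\l\,\tilde z(r^{-1})^2)$. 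Finally, because $x(n)=b(n+1)$ and $z(n+1)/z(n)\to r$,
\[
\frac{b(n)}{z(n)}=\frac{x(n-1)}{z(n-1)}\cdot\frac{z(n-1)}{z(n)}\longrightarrow\frac{L_z x}{r}=\frac{1}{\l\,\tilde z(r^{-1})^2}=\frac{1}{\l\bigl(\sum_{j=0}^{\infty}z(j)r^{-j}\bigr)^2},
\]
which is the claimed limit.

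I expect the main obstacle to be the mechanical but error-prone bookkeeping: matching the rewritten equation to the template \eqref{eq:scalarconvA} with the correct shift and initial datum, and then carrying the several $Z$-transform sums through \eqref{eq:thm3b}--\eqref{eq:xtilde} so that everything simplifies, all of which rests on the single identity $\tilde z(r^{-1})\bigl(1-\l\sum_{j\geq1}b(j)r^{-j}\bigr)=1$. Conceptually the only substantive point is recognising that the strict inequality needed in \eqref{eq:thm3a} is exactly what the factor $1/2$ in hypothesis \eqref{eq:bhalf} buys.
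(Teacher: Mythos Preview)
Your proposal is correct and follows exactly the approach the paper indicates: the paper states the lemma ``without proof'' and remarks only that ``the proof of this consists of rewriting \eqref{eq:deltadiff2} so that the roles of $b$ and $z$ are interchanged, and by then applying Theorem~\ref{thm:3b}.'' You have carried out precisely this plan, supplying the index bookkeeping and the algebraic simplification that the paper omits, and your identification of \eqref{eq:bhalf} as the exact condition guaranteeing \eqref{eq:thm3a} via $\tilde z(r^{-1})<2$ is the key substantive point.
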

\begin{remark}\label{rk:l1}
    If $r\in(0,1]$ and
        $\l\sum_{j=1}^{\infty}b(j)r^{-j}<\frac{1}{2},$
   then
        $\sum_{j=1}^{\infty}z(j)r^{-j}<1,$
    and hence
        $\l\sum_{j=1}^{\infty}b(j)r^{j}<\frac{1}{2}$
        \text{ and }
        $\sum_{j=1}^{\infty}z(j)r^{j}<1$.
\end{remark}
We now state some preparatory lemmata which lead to converses of Theorems~\ref{thm:4} and~\ref{thm:6}.
\begin{lemma}\label{lm:udlim}
    Let $z$ be the solution of \eqref{eq:deltadiff2} and let \eqref{eq:bhalf} hold with $r\in(0,1]$. Define
    the sequences $(U_m)_{m\geq 1}$ and $(L_m)_{m\geq 1}$ by
    \begin{align*}
        U_1=1, \quad U_{m+1}=1-\sum_{j=1}^{m}z(j)r^jL_m ,\quad  L_m= 1-\sum_{j=1}^{\infty}z(j)r^jU_m, \quad  m\in\Z^+/\{0\}.
    \end{align*}
    Then
    \[
        \lim_{m\to\infty}U_{m} = \lim_{m\to\infty}L_{m} = 1-\l\sum_{j=1}^{\infty}b(j)r^j.
    \]
\end{lemma}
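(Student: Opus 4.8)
The plan is to collapse the coupled definition of $(U_m)$ and $(L_m)$ into a single scalar recursion and then treat that recursion as a perturbed linear contraction.

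\textbf{Reduction.} First I would set $s:=\sum_{j=1}^{\infty}z(j)r^{j}$ and $s_{m}:=\sum_{j=1}^{m}z(j)r^{j}$. Since $z(j)\geq 0$ (by \eqref{eq:deltadiff2} and $b\geq 0$) and $r>0$, the partial sums $s_{m}$ increase to $s$, and by Remark~\ref{rk:l1} (which is where the hypothesis $\l\sum_{j=1}^{\infty}b(j)r^{-j}<1/2$ is used) we have $s\in[0,1)$. Because each $U_{m}$, $L_{m}$ is a scalar, the defining relations read $L_{m}=1-sU_{m}$ and $U_{m+1}=1-s_{m}L_{m}$; eliminating $L_{m}$ gives
\begin{equation*}
U_{m+1}=1-s_{m}+s\,s_{m}\,U_{m},\qquad m\geq 1,\quad U_{1}=1.
\end{equation*}

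\textbf{Candidate limit.} Next I would identify the common limit. Multiplying \eqref{eq:deltadiff2} by $r^{n}$ and summing, exactly as in the proof of Lemma~\ref{prop:ztran} and legitimate since $\l\sum_{j=1}^{\infty}b(j)r^{j}<1$ by Remark~\ref{rk:l1}, yields $1+s=\sum_{j=0}^{\infty}z(j)r^{j}=1/(1-\l\sum_{j=1}^{\infty}b(j)r^{j})$. So I put
\begin{equation*}
c:=\frac{1}{1+s}=1-\l\sum_{j=1}^{\infty}b(j)r^{j},
\end{equation*}
and note that $c$ solves the limiting fixed-point equation $c=1-s+s^{2}c$.

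\textbf{Convergence.} Subtracting the fixed-point identity from the recursion of the reduction step gives $U_{m+1}-c=s\,s_{m}(U_{m}-c)+c(s-s_{m})$. Here $0\leq s\,s_{m}\leq s^{2}<1$ uniformly in $m$, while the forcing term $c(s-s_{m})=c\sum_{j>m}z(j)r^{j}$ is nonnegative, nonincreasing, and tends to $0$. I would then run the standard argument for perturbed contractions: iterating $|U_{m+1}-c|\leq s^{2}|U_{m}-c|+c(s-s_{1})$ shows $(U_{m})$ is bounded, and then $\limsup_{m\to\infty}|U_{m}-c|\leq s^{2}\limsup_{m\to\infty}|U_{m}-c|$ forces $U_{m}\to c$. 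Finally $L_{m}=1-sU_{m}\to 1-sc=c$, since $sc=s/(1+s)=1-c$; this gives both limits and shows their value is $1-\l\sum_{j=1}^{\infty}b(j)r^{j}$.

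\textbf{Expected obstacle.} There is no real difficulty in this lemma; the only point needing a little care is that both the contraction factor $s\,s_{m}$ and the perturbation $c(s-s_{m})$ vary with $m$ through the partial sums $s_{m}$. This is dispatched by the uniform bound $s\,s_{m}\leq s^{2}<1$ (coming from Remark~\ref{rk:l1}) together with $s_{m}\uparrow s$, so the whole argument is a routine perturbation of a geometric recursion.
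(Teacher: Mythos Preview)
Your proof is correct. Both you and the paper begin with the same reduction: eliminating $L_m$ to obtain the scalar first--order recursion $U_{m+1}=g(m)+a(m)U_m$ with $g(m)=1-s_m$ and $a(m)=s\,s_m$ in your notation. The difference lies entirely in how the limit of this non--autonomous linear recursion is extracted. The paper writes down the explicit solution formula $U_{m+1}=\prod_{j=1}^{m}a(j)\,U_1+\sum_{n=1}^{m}\bigl\{\prod_{j=n+1}^{m}a(j)\bigr\}g(n)$, rewrites the second sum as a quotient of telescoping sums, and then invokes Toeplitz's Lemma to identify the limit as $1/(1+s)$. Your route is more direct: you guess the fixed point $c=1/(1+s)$ of the limiting recursion, subtract to obtain $U_{m+1}-c=s\,s_m(U_m-c)+c(s-s_m)$, and close with the standard perturbed--contraction argument (uniform factor $s\,s_m\leq s^2<1$, forcing $c(s-s_m)\to 0$). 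Your approach is shorter and avoids the Toeplitz machinery; the paper's approach has the minor advantage of not requiring one to know the limit in advance, since Toeplitz delivers it as output. Both rely on Remark~\ref{rk:l1} in the same essential way, to guarantee $s<1$.
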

 \begin{proof}[Proof of Lemma~\ref{lm:udlim}]
    The proof concentrates on verifying that $\lim_{m\to\infty}U_m$ exists. Once this limit is established
    it is easy to find $\lim_{m\to\infty}L_m$.
    We have $U_1=1$ and
    \[
        U_{m+1}=g(m) + a(m)U_m, \quad m\geq1,
    \]
    where $g(m) =1-\sum_{j=1}^{m}z(j)r^j$ and $a(m)=\sum_{j=1}^{\infty}z(j)r^j\sum_{l=1}^{m}z(l)r^l$. An explicit formula
    for $U$ is given in e.g.~\cite[Exercise~2.1.17]{se:1996} and is
    \begin{equation}\label{eq:Uex}
        U_{m+1}=\prod_{j=1}^{m}a(j)U_1 + \sum_{n=1}^{m}\bigl\{\prod_{j=n+1}^{m}a(j)\bigr\}g(n), \quad m\geq2,
    \end{equation}
    in which the usual convention $\prod_{j=m+1}^{m}a(j):=1$ applies.
    Also we note that $g(m)\to1-\sum_{j=1}^{\infty}z(j)r^j$ and $a(m)\to\bigl(\sum_{j=1}^{\infty}z(j)r^j\bigr)^2\in(0,1)$,
 as $m\to\infty$. Thus the first term on the right--hand side of
\eqref{eq:Uex} tends to zero as $m\to\infty$. Turning our attention
to the second term we have
    \[
        A_m:= \sum_{n=1}^{m}\frac{\prod_{j=1}^{m}a(j)}{\prod_{j=1}^{n}a(j)}g(n)
        = \frac{\sum_{n=1}^{m}\frac{1}{\prod_{j=1}^{n}a(j)}g(n)}{\frac{1}{\prod_{j=1}^{m}a(j)}}
        =\frac{\sum_{n=2}^{m}c(n)+c(1)}{\sum_{n=2}^{m}d(n)+\frac{1}{a(1)}},
    \]
    where
    \begin{align*}
        d(n):=\frac{1}{\prod_{j=1}^{n}a(j)}-\frac{1}{\prod_{j=1}^{n-1}a(j)}, \quad
        c(n):=\frac{1}{\prod_{j=1}^{n}a(j)}g(n).
    \end{align*}
    Thus $d(n)=\frac{1-a(n)}{\prod_{j=1}^{n}a(j)}$ and hence $c(n)\to\infty$ and $d(n)\to\infty$ as $n\to\infty$. Moreover,
    \[
        \frac{c(n)}{d(n)}=\frac{g(n)}{1-a(n)} = \frac{1-\sum_{j=1}^{n}z(j)r^j}{1-\sum_{j=1}^{\infty}z(j)r^j\sum_{l=1}^{n}z(l)r^l}
    \]
    and so
    \[
        \lim_{n\to\infty}\frac{c(n)}{d(n)}= \frac{1-\sum_{j=1}^{\infty}z(j)r^j}{1-\biggr(\sum_{j=1}^{\infty}z(j)r^j\biggl)^2}
        =\frac{1}{1+\sum_{j=1}^{\infty}z(j)r^j}.
    \]
    Applying Toeplitz's Lemma (cf., e.g.,~\cite[4.3.2 pp.390]{Shir}) now gives
    \[
        \lim_{m\to\infty}\frac{\sum_{n=2}^{m}c(n)}{\sum_{n=2}^{m}d(n)}=\frac{1}{1+\sum_{j=1}^{\infty}z(j)r^j}.
    \]
    Therefore
    \[
        \lim_{m\to\infty}U_m=\lim_{m\to\infty}A_m
        =\lim_{m\to\infty}\frac{\sum_{n=2}^{m}c(n)+c(1)}{\sum_{n=2}^{m}d(n)+\frac{1}{a(1)}}
        =\frac{1}{1+\sum_{j=1}^{\infty}z(j)r^j}.
    \]
    Finally, $z$ may be written in terms of $b$ using \eqref{eq:deltadiff2}.
\end{proof}
\begin{lemma}\label{lm:resb}
    Let \eqref{eq:nsstat} and \eqref{eq:bhalf} hold. If $\rho\in\mathcal{W}^{\downarrow}(r)$, for $r\in(0,1]$, then $z$ satisfies
    \begin{equation} \label{eq:boundzrhom}
        L_{m}\leq\mathbb{E}[\nu(0)^2]\liminf_{n\to\infty}\frac{z(n)}{\rho(n)}
        \leq\mathbb{E}[\nu(0)^2]\limsup_{n\to\infty}\frac{z(n)}{\rho(n)}\leq U_{m+1}, \quad m\geq1,
    \end{equation}
    where $U$ and $L$ are the sequences defined in Lemma~\ref{lm:udlim}.
\end{lemma}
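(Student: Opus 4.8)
The plan is to recast \eqref{eq:boundzrhom} as an assertion about the sequence $q_n:=z(n)/\chi_z(n)$, where $\chi_z$ is as in \eqref{def.chiz}. By \eqref{eq:covres} together with the expression for $\mathbb{E}[\nu(0)^2]$ recorded just above \eqref{eq:archvar}, we have $\rho(n)=\mathbb{E}[\nu(0)^2]\chi_z(n)$ for all $n\geq0$; since $\rho\in\mathcal{W}^{\downarrow}(r)$ forces $\rho(n)>0$, each $\chi_z(n)$ is positive, $q_n$ is well defined, and $q_n=\mathbb{E}[\nu(0)^2]z(n)/\rho(n)\in[0,1]$ (the upper bound because $\chi_z(n)\geq z(0)z(n)=z(n)$). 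With $Q^-:=\liminf_{n\to\infty}q_n$ and $Q^+:=\limsup_{n\to\infty}q_n$ it then suffices to prove $L_m\leq Q^-\leq Q^+\leq U_{m+1}$ for every $m\geq1$, with $U,L$ as in Lemma~\ref{lm:udlim}. The engine is the identity
\[
1=q_n+\sum_{j=1}^{\infty}z(j)\,q_{n+j}\,\frac{\rho(n+j)}{\rho(n)},\qquad n\geq0,
\]
obtained by dividing $\chi_z(n)=z(n)+\sum_{j=1}^{\infty}z(j)z(n+j)$ by $\chi_z(n)$ and using $z(n+j)=q_{n+j}\chi_z(n+j)$ and $\chi_z(n+j)/\chi_z(n)=\rho(n+j)/\rho(n)$.

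Next I would record the two facts about $a_n(j):=\rho(n+j)/\rho(n)$ that are needed. Iterating the first limit in \eqref{eq:p2} gives $a_n(j)\to r^{j}$ as $n\to\infty$ for each fixed $j$. There is also a dominating bound uniform in $n$: for $r<1$, picking $\epsilon>0$ with $r(1+\epsilon)<1$ yields $a_n(j)\leq(r(1+\epsilon))^{j}$ for all large $n$, while for $r=1$ the asymptotic monotonicity built into $\mathcal{W}^{\downarrow}(1)$ gives $a_n(j)\leq 4$ for all $j\geq0$ and all large $n$, exactly as in the proof of Lemma~\ref{lm:dasycov}. Since $\sum_{j\geq1}z(j)r^{-j}<\infty$ by \eqref{eq:bhalf}, in both cases $\sum_{j\geq1}z(j)\sup_{n\geq N}a_n(j)<\infty$ for a suitable $N$, so dominated convergence gives $\sum_{j=1}^{\infty}z(j)a_n(j)\to\sigma_z:=\sum_{j=1}^{\infty}z(j)r^{j}$ as $n\to\infty$; here $\sigma_z<1$ by Remark~\ref{rk:l1}.

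The conclusion then follows by induction on $m$ from two one-line consequences of the identity. (A) If $Q^+\leq c$, then $q_n=1-\sum_{j\geq1}z(j)q_{n+j}a_n(j)\geq 1-(\sup_{k\geq n+1}q_k)\sum_{j\geq1}z(j)a_n(j)$, and letting $n\to\infty$ gives $Q^-\geq 1-c\,\sigma_z$. (B) If $Q^-\geq c'$, then, discarding the nonnegative terms with $j>M$, $q_n\leq 1-(\inf_{k\geq n+1}q_k)\sum_{j=1}^{M}z(j)a_n(j)$, and letting $n\to\infty$ gives $Q^+\leq 1-c'\sum_{j=1}^{M}z(j)r^{j}$ for each $M\geq1$. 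Starting from $Q^+\leq 1=U_1$, (A) with $c=U_1$ gives $Q^-\geq 1-\sigma_z U_1=L_1$, and then (B) with $c'=L_1$, $M=1$ gives $Q^+\leq 1-L_1z(1)r=U_2$, which is the case $m=1$; assuming $L_m\leq Q^-\leq Q^+\leq U_{m+1}$, (A) with $c=U_{m+1}$ gives $Q^-\geq 1-\sigma_z U_{m+1}=L_{m+1}$ and (B) with $c'=L_{m+1}$, $M=m+1$ gives $Q^+\leq 1-L_{m+1}\sum_{j=1}^{m+1}z(j)r^{j}=U_{m+2}$, closing the induction. Unwinding $q_n=\mathbb{E}[\nu(0)^2]z(n)/\rho(n)$ then yields \eqref{eq:boundzrhom}.

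The one genuinely delicate step is the interchange of the limit and the infinite sum in $\sum_{j\geq1}z(j)a_n(j)\to\sigma_z$; this is where both the $\mathcal{W}^{\downarrow}$ monotonicity (when $r=1$) and the summability consequences of \eqref{eq:bhalf} are used, and it is precisely why $U_{m+1}$ carries only the finite partial sum $\sum_{j=1}^{m}$ while $L_m$ retains the full tail $\sum_{j=1}^{\infty}$. Everything else is bookkeeping inside the two-sided induction.
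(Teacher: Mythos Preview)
Your proposal is correct and follows essentially the same route as the paper: the paper sets $P(n)=\rho(n)/\mathbb{E}[\nu(0)^2]=\chi_z(n)$, uses the same identity $P(n)=z(n)+\sum_{j\geq1}z(j)z(n+j)$, and runs the identical two--sided induction via the ratios $P(n+j)/P(n)$, with the tail controlled by $P(n+j)/P(n)<r^{-j}$ (for $r<1$) or asymptotic monotonicity (for $r=1$) together with $\sum_{j}z(j)r^{-j}<\infty$. Your packaging of the tail estimate as a single dominated--convergence statement for $\sum_{j\geq1}z(j)a_n(j)\to\sigma_z$ is a clean way to phrase what the paper does via explicit $\epsilon$--$N$ splits at a cutoff $M$, but the underlying argument is the same.
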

\begin{proof}[ Proof of Lemma~\ref{lm:resb}]
    The upper and lower bounds on $z/\rho$ are established by an inductive proof. The bounds themselves are
    constructed recursively.  Define $P(n)=\rho(n)/\mathbb{E}[\nu(0)^2]$. We deal with the case when $r\in(0,1)$:
    the proof for $r=1$ is largely similar, but employs the asymptotic
monotonicity of $P$ to establish estimates for terms of the form
$P(n+j)/P(n)$.

    From \eqref{eq:covres} and using the non-negativity of $z$ and definition of $P$, we have
    \begin{equation}\label{eq:u1}
        P(n) = \sum_{j=0}^{\infty}z(j)z(n+j) = z(n)+ \sum_{j=1}^{\infty}z(j)z(n+j) \geq z(n).
    \end{equation}
    Thus $z(n)/P(n)\leq1$ and so $\limsup_{n\to\infty} z(n)/P(n)\leq1=U_1$.
    As $\lim_{n\to\infty}P(n+1)/P(n)=r$ we have for all $\epsilon>0$ fixed that there exists an $N_0(\epsilon)\in\Z^+$
    such that $P(n+j)/P(n)<r^j(1+\epsilon)^j<1<r^{-j}$ for all $n\geq N_0(\epsilon)$.
    Fix $M\in\Z^+$. Let $n\geq N_0$. Thus by \eqref{eq:u1}
    \begin{align*}
        \frac{1}{P(n)}\sum_{j=1}^{\infty}z(j)z(n+j)&\leq\frac{1}{P(n)}\sum_{j=1}^{\infty}z(j)P(n+j) \\
        &=\sum_{j=1}^{M}z(j)\frac{P(n+j)}{P(n)} + \sum_{j=M+1}^{\infty}z(j)\frac{P(n+j)}{P(n)} \\
        &\leq\sum_{j=1}^{M}z(j)r^j(1+\epsilon)^j + \sum_{j=M+1}^{\infty}z(j)r^{-j},
    \end{align*}
    which gives
    \[
        1=\frac{z(n)}{P(n)} + \frac{1}{P(n)}\sum_{j=1}^{\infty}z(j)z(n+j)
        \leq \frac{z(n)}{P(n)} +\sum_{j=1}^{M}z(j)r^j(1+\epsilon)^j + \sum_{j=M+1}^{\infty}z(j)r^{-j}.
    \]
    Thus
    \[
        \frac{z(n)}{P(n)}\geq 1 - \sum_{j=1}^{M}z(j)r^j(1+\epsilon)^j - \sum_{j=M+1}^{\infty}z(j)r^{-j},
        \quad n\geq N_0(\epsilon).
    \]
    Hence
    \[
    \liminf_{n\to\infty} \frac{z(n)}{P(n)}\geq 1 - \sum_{j=1}^{M}z(j)r^j(1+\epsilon)^j - \sum_{j=M+1}^{\infty}z(j)r^{-j}.
    \]
    Let $\epsilon\to0$ from the right, then let $M\to\infty$ to get
    \[
        \liminf_{n\to\infty}\frac{z(n)}{P(n)}\geq 1 - \sum_{j=1}^{\infty}z(j)r^j =L_1>0,
    \]
    where the fact that $L_1>0$ is a consequence of assumption \eqref{eq:bhalf}.

    The lower bound $L_1$ is used then to determine the upper bound $U_2$: we rewrite \eqref{eq:u1}
    according to
    \[
        z(n) + z(n+1)z(1) = P(n)- \sum_{j=2}^{\infty}z(j)z(n+j)\leq P(n).
    \]
    Since $\liminf_{n\to\infty} z(n)/P(n)\geq L_1$, for all $\epsilon\in (0,1)$ there exists an $N_3(\epsilon)\in\Z^+$
    such that for all $n\geq N_3(\epsilon)$
    \[
        \frac{z(n)}{P(n)}\leq 1 - z(1)\frac{P(n+1)}{P(n)}\frac{z(n+1)}{P(n+1)}\leq1-z(1)\frac{P(n+1)}{P(n)}L_1(1-\epsilon).
    \]
    Hence as $P(n+1)/P(n)\to r$ as $n\to\infty$, we get
    \[
        \limsup_{n\to\infty}\frac{z(n)}{P(n)}\leq 1-z(1)rL_1(1-\epsilon).
    \]
    Let $\epsilon\to0$ from the right to get $\limsup_{n\to\infty} z(n)/P(n)\leq 1-z(1)rL_1 = U_2$. Therefore
    we have established \eqref{eq:boundzrhom} for $m=1$.

    Regarding the induction step at level $m$ for $m\geq2$, assume that \eqref{eq:boundzrhom} holds, i.e.,
    \[
        \limsup_{n\to\infty}\frac{z(n)}{P(n)}\leq U_m, \quad \liminf_{n\to\infty}\frac{z(n)}{P(n)}\geq L_{m-1}.
    \]
    This implies that, for all $\epsilon>0$ sufficiently small, there exists $N_1(\epsilon)>0$ such that
    $z(n)/P(n)\leq U_{m}(1+\epsilon)$ for all $n\geq N_1(\epsilon)$.

    Fix $M\in\Z^+$, and let $N_0(\epsilon)$ be as defined above. Then for $n\geq\max(N_1(\epsilon),N_0(\epsilon))$,
    we note that
    \begin{align*}
        \sum_{j=1}^{\infty}z(j)\frac{z(n+j)}{P(n)} &=\sum_{j=1}^{\infty}z(j)\frac{z(n+j)}{P(n+j)}\frac{P(n+j)}{P(n)}
        \leq \sum_{j=1}^{\infty}z(j)U_{m}(1+\epsilon)\frac{P(n+j)}{P(n)} \\
        &=\sum_{j=1}^{M}z(j)U_{m}(1+\epsilon)\frac{P(n+j)}{P(n)}
        + \sum_{j=M+1}^{\infty}z(j)U_{m}(1+\epsilon)\frac{P(n+j)}{P(n)} \\
        &\leq \sum_{j=1}^{M}z(j)U_{m}(1+\epsilon)r^j(1+\epsilon)^j + \sum_{j=M+1}^{\infty}z(j)U_{m}(1+\epsilon)r^{-j} .
    \end{align*}
    Hence
    \begin{multline*}
        1=\frac{z(n)}{P(n)}+\frac{1}{P(n)}\sum_{j=1}^{\infty}z(j)z(n+j)\\
        \leq\frac{z(n)}{P(n)}+ \sum_{j=1}^{M}z(j)U_{m}(1+\epsilon)r^j(1+\epsilon)^j
        + \sum_{j=M+1}^{\infty}z(j)U_{m}(1+\epsilon)r^{-j} ,
    \end{multline*}
    which rearranges to give
    \[
        \liminf_{n\to\infty}\frac{z(n)}{P(n)}
        \geq 1- U_{m}(1+\epsilon)\left(\sum_{j=1}^{M}z(j)r^j(1+\epsilon)^j+\sum_{j=M+1}^{\infty}z(j)r^{-j}\right),
    \]
    having taken the limit inferior as $n\to\infty$.
    Letting $\epsilon\to0$ from the right, and then letting $M\to\infty$, gives
    \[
        \liminf_{n\to\infty}\frac{z(n)}{P(n)}\geq 1- U_{m}\sum_{j=1}^{\infty}z(j)r^j=L_{m}.
    \]
    This yields the lower limit in \eqref{eq:boundzrhom} at level $m+1$.

    It remains to show that the upper limit in \eqref{eq:boundzrhom} holds at level $m+1$.
    To prove this, we start by rewriting \eqref{eq:u1} in the form
    \[
        z(n)+\sum_{j=1}^{m}z(j)z(n+j) + \sum_{j=m+1}^{\infty}z(j)z(n+j) = P(n),
        \]
        which gives
        \begin{equation} \label{eq.mterms}
        \frac{z(n)}{P(n)} +\frac{1}{P(n)}\sum_{j=1}^{m}z(j)z(n+j) =1-\frac{1}{P(n)}\sum_{j=m+1}^{\infty}z(j)z(n+j)\leq 1.
        \end{equation}
    Since $\liminf_{n\to\infty}z(n)/P(n)\geq L_m$, for every $\epsilon\in(0,1)$ there is an $N_2(\epsilon)\in\Z^+$
    such that $n\geq N_2(\epsilon)$ implies $z(n)/P(n)>L_m(1-\epsilon)$.

Let $n\geq\max(N_2(\epsilon),N_0(\epsilon))$. Then
\begin{align*}
\frac{1}{P(n)}\sum_{j=1}^{m}z(j)z(n+j)
=
    \sum_{j=1}^{m}z(j)\frac{z(n+j)}{P(n+j)}\frac{P(n+j)}{P(n)}
    \geq\sum_{j=1}^{m}z(j)\frac{P(n+j)}{P(n)}L_m(1-\epsilon).
\end{align*}
Inserting this estimate into \eqref{eq.mterms} and rearranging yields
\[
    \frac{z(n)}{P(n)}\leq 1-L_m(1-\epsilon)\sum_{j=1}^{m}z(j)\frac{P(n+j)}{P(n)},
    \quad n\geq \max(N_2(\epsilon),N_0(\epsilon)).
\]
Therefore, using the positivity of $P$ and $z$, we get
\begin{align*}
    \limsup_{n\to\infty}\frac{z(n)}{P(n)}
    &\leq 1 +\limsup_{n\to\infty}\biggl(-L_m(1-\epsilon)\sum_{j=1}^{m}z(j)\frac{P(n+j)}{P(n)} \biggr) \\
    &=1-\liminf_{n\to\infty}\biggl(\sum_{j=1}^{m}z(j)\frac{P(n+j)}{P(n)}\biggr)L_m(1-\epsilon).
\end{align*}
Since $P(n+j)/P(n)\to r^j$ as $n\to\infty$, and the sum contains only finitely many terms, we have that
\[
\liminf_{n\to\infty}\biggl(\sum_{j=1}^{m}z(j)\frac{P(n+j)}{P(n)}\biggr)
=\lim_{n\to\infty}\biggl(\sum_{j=1}^{m}z(j)\frac{P(n+j)}{P(n)}\biggr)=
\sum_{j=1}^{m}z(j)r^j.
\]
Hence
\begin{align*}
\limsup_{n\to\infty}\frac{z(n)}{P(n)}
    &\leq
    1-\sum_{j=1}^{m}z(j)r^jL_m(1-\epsilon).
\end{align*}
Letting $\epsilon\to0^+$ yields
\[
    \limsup_{n\to\infty}\frac{z(n)}{P(n)}\leq 1 -\sum_{j=1}^{m}z(j)r^jL_m=U_{m+1},
\]
by the definition of $U_{m+1}$. Thus we have shown that if the $m$--th level statement in \eqref{eq:boundzrhom} holds, then
\[
    L_{m}\leq\liminf_{n\to\infty}\frac{z(n)}{P(n)}\leq\limsup_{n\to\infty}\frac{z(n)}{P(n)}\leq U_{m+1},
\]
which is the $(m+1)$--th level statement in \eqref{eq:boundzrhom}. This completes the proof of the general induction step, and
since we have already shown that \eqref{eq:boundzrhom} holds for $m=1$, the lemma is true.
\end{proof}
\begin{proof}[Proof of Theorems~\ref{thm:4a} and~\ref{thm:6a}]
The implication that $b\in\mathcal{W}^{\downarrow}(r)$ gives rise to $\rho\in\mathcal{W}^{\downarrow}(r)$, for $r\in(0,1]$ is nothing other than the subject of Theorems~\ref{thm:4} and~\ref{thm:6}. The converse result that $\rho\in\mathcal{W}^{\downarrow}(r)$ implies $b\in\mathcal{W}^{\downarrow}(r)$, for $r\in(0,1]$, is an immediate consequence
of Remark~\ref{rk:l1} and Lemmas~\ref{lm:bres},~\ref{lm:udlim} and
~\ref{lm:resb} with \eqref{eq:nsstat} being required to guarantee that $\mathbb{E}[\nu(0)^2]$ is well defined and finite.

It can be seen that the sequence $U_m$ and $L_m$ have the same limit as $m\to\infty$. By virtue of Lemma~\ref{lm:udlim},
we may take the limit as $m\to\infty$ on both sides of \eqref{eq:boundzrhom}, which yields
$\lim_{n\to\infty} z(n)/P(n)=\lim_{m\to\infty} L_m=\lim_{m\to\infty} U_{m+1}$,
from which the result follows.
\end{proof}
\subsection{Bounds}
The proof of Theorem~\ref{thm:bounds} uses a result concerning the
boundedness of linear Volterra operators in~\cite[Theorem~5.1]{jaigdr:2006}. We state a scalar variant of this theorem.
Consider the non--convolution linear Volterra summation equation
\begin{equation}\label{eq:3a}
    z(n+1)=\sum_{i=0}^{n}H(n,i)z(i), \quad n\in\Z^+;
\end{equation}
where $z(0)=z_0\in\R$ and $H:\Z^+\times\Z^+\to\R$ with $H(n,i)=0$ for $i>n$.
\begin{lemma}\label{thm:Vtabnd}
Suppose that there are integers $M$ and $N$ with $0<M<N$ such that
\begin{align*}
    \sup_{n\geq N}\sum_{i=M}^{n}|H(n,i)|<1, \quad \sup_{n\geq M}\sum_{i=0}^{M}|H(n,i)|<+\infty.
\end{align*}
Then there is $K>0$ independent of $z_0$ such that the solution of equations \eqref{eq:3a} satisfies
$|z(n)|\leq K |z_0|$ for $n\geq0$.
\end{lemma}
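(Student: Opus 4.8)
The plan is to produce a uniform a priori bound on the (uniquely defined, by the recursion) solution $z$ of \eqref{eq:3a} by splitting the Volterra sum into a \emph{tail} part over indices $i\ge M$, which is contractive thanks to the first hypothesis, and a \emph{head} part over indices $0\le i<M$, which depends only on the finitely many values $z(0),\dots,z(M)$ and is controlled by the second hypothesis. Since \eqref{eq:3a} is homogeneous and linear, each $z(n)$ is a fixed linear functional of $z_0$; hence once a bound of the form $|z(n)|\le D$ is obtained with $D$ a universal multiple of $\max_{0\le j\le N}|z(j)|$, the assertion $|z(n)|\le K|z_0|$ follows with $K$ depending only on $H$.

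First I would set $q:=\sup_{n\ge N}\sum_{i=M}^{n}|H(n,i)|<1$ and $P:=\sup_{n\ge M}\sum_{i=0}^{M}|H(n,i)|<\infty$, and introduce the non-decreasing majorant $\bar z(n):=\max_{0\le j\le n}|z(j)|$. Because $N>M$, for every $n\ge N$ one has $n>M$, so the split $\sum_{i=0}^{n}=\sum_{i=0}^{M-1}+\sum_{i=M}^{n}$ is legitimate and both hypotheses apply simultaneously, giving
\[
|z(n+1)|\le\Big(\sum_{i=0}^{M-1}|H(n,i)|\Big)\bar z(M)+\Big(\sum_{i=M}^{n}|H(n,i)|\Big)\bar z(n)\le P\,\bar z(M)+q\,\bar z(n),\quad n\ge N.
\]

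Next I would run a short induction on $n\ge N$ to show $\bar z(n)\le D:=\max\big(\bar z(N),\,P\bar z(M)/(1-q)\big)$. Writing $C:=P\bar z(M)$, the crux is the fixed-point-type inequality: if $\bar z(n)\le D$ then $|z(n+1)|\le C+qD\le D$, since $D\ge C/(1-q)$ forces $C\le(1-q)D$ and hence $C+qD\le(1-q)D+qD=D$; therefore $\bar z(n+1)=\max(\bar z(n),|z(n+1)|)\le D$. As $|z(n)|\le\bar z(N)\le D$ trivially for $n\le N$, we conclude $|z(n)|\le D$ for all $n\ge0$. Finally, using $\bar z(M)\le\bar z(N)$ and the linearity $z(j)=a_j z_0$ (with $a_j$ determined by $H$ alone), one gets $D\le\max(1,P/(1-q))\,\bar z(N)=\max(1,P/(1-q))\big(\max_{0\le j\le N}|a_j|\big)|z_0|$, so $K:=\max(1,P/(1-q))\max_{0\le j\le N}|a_j|$ does the job.

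I do not anticipate a serious obstacle: this is an elementary discrete comparison argument. The only genuine care is (i) ensuring the two index ranges in the hypotheses can be invoked at once, which is precisely why $N>M$ is assumed and why the head sum is estimated against $\bar z(M)$ rather than $\bar z(n)$, and (ii) checking that the constant $D$ delivered by the induction is a universal multiple of $\bar z(N)$, so that the stated independence of $K$ from $z_0$ is honoured. Since $H$ need not be of convolution type, no $Z$-transform method applies, and this direct majorant estimate is the natural route.
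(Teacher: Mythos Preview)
Your argument is correct. Note, however, that the paper does not supply its own proof of this lemma: it is stated as a scalar variant of \cite[Theorem~5.1]{jaigdr:2006} and invoked as a black box in the proof of Theorem~\ref{thm:bounds}. Your direct majorant estimate---splitting the Volterra sum at $M$, using the tail contractivity $q<1$ to absorb the running maximum $\bar z(n)$, and then invoking linearity to convert the bound $D$ into a multiple of $|z_0|$---is precisely the natural elementary route, and nothing more sophisticated is needed here. One cosmetic point: the hypothesis bounds $\sum_{i=0}^{M}|H(n,i)|$ (sum to $M$ inclusive) while you use $\sum_{i=0}^{M-1}$; this is harmless since the shorter sum is dominated by $P$, but you might remark on it explicitly.
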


\begin{proof}[Proof of Theorem~\ref{thm:bounds}]
    We deal here only with the case $r=1$. The case $r<1$ follows the same steps as that of $r=1$.
    We firstly show that $z/\gamma$ is bounded. In order to write \eqref{eq:deltadiff2} as a
    convolution equation we define $\beta(n)=\l b(n+1)$. Thus
$\beta(n)\leq C_0\gamma(n)$ for some $C_0>0$ and all $n$. Then defining
$x=z/\gamma$ and using \eqref{eq:deltadiff2}, we have
    \[
        x(n+1) = \sum_{j=0}^{n}H(n,j)x(j), \quad n\geq0, \quad x(0)=1/\g(0),
    \]
    where
    \[
    H(n,j):=\frac{\beta(n-j)\g(j)}{\g(n)}\frac{\g(n)}{\g(n+1)}, \quad n\geq j\geq 0.
    \]
    To show the boundedness of $x$ we apply Lemma~\ref{thm:Vtabnd}. That is, we must show that
    \[
     W_H:=   \lim_{N\to\infty}\limsup_{n\to\infty}\sum_{j=N}^{n}H(n,j)<1
    \]
    and $H_M:=\sup_{n\geq M}\sum_{j=0}^{M}H(n,j)$ is finite for each $M\in\Z^{+}$. By the definition of $H$ and \eqref{eq:p2}
    we get
    \[
        \limsup_{n\to\infty}\sum_{j=N}^{n}H(n,j) = \limsup_{n\to\infty}\sum_{j=N}^{n}\frac{\b(n-j)\g(j)}{\g(n)}.
    \]
    Let $n\geq2N$. Then
    \begin{align*}
        \sum_{j=N}^{n}\frac{\b(n-j)\g(j)}{\g(n)} &= \sum_{l=0}^{n-N}\b(l)\frac{\g(n-l)}{\g(n)} 
     \leq  \sum_{l=0}^{N-1}\b(l)\frac{\g(n-l)}{\g(n)} + C_0\sum_{l=N}^{n-N}\frac{\g(l)\g(n-l)}{\g(n)}.
    \end{align*}
    Thus by \eqref{eq:p2}
    \[
        \limsup_{n\to\infty}\sum_{j=N}^{n}H(n,j) \leq \sum_{l=0}^{N-1}\b(l)
        + C_0\limsup_{n\to\infty}\sum_{l=N}^{n-N}\frac{\g(l)\g(n-l)}{\g(n)},
    \]
    and by \eqref{eq:p1} we get
    \begin{align*}
        W_{H} &= \lim_{N\to\infty}\limsup_{n\to\infty}\sum_{j=N}^{n}H(n,j)\\
       & \leq \sum_{l=0}^{\infty}\b(l) + C_0\lim_{N\to\infty}\limsup_{n\to\infty}\sum_{l=N}^{n-N}\frac{\g(l)\g(n-l)}{\g(n)}
=\sum_{l=0}^\infty \beta(l),
    \end{align*}
    so $W_H<1$ as required.
    Now to show that for each fixed $M$, $H_M$ is bounded, we note for $n\geq M$ that
    \begin{align*}
        \sum_{j=0}^{M}H(n,j) &= \sum_{j=0}^{M}\frac{\b(n-j)}{\g(n-j)}\frac{\g(j)\g(n-j)}{\g(n)}\frac{\g(n)}{\g(n+1)} \\
        &\leq C_0\sup_{n\geq0}\left(\frac{\g(n)}{\g(n+1)}\right)\sum_{j=0}^{M}\frac{\g(j)\g(n-j)}{\g(n)} \\
        &\leq C_0\sup_{n\geq0}\left(\frac{\g(n)}{\g(n+1)}\right)\sup_{n\geq M}\left(\frac{(\g*\g)(n)}{\g(n)}\right)
    \end{align*}
    and so $\sup_{n\geq M}H_M(n)$ is finite and therefore $x$ is bounded. As a bound on the resolvent is established,
 it just remains to deduce the bound on the autocovariance
function. Moreover, it is immediate from $x(n)=z(n)/\g(n)\leq C_1$ that
$z$ is summable. Hence
    \begin{align*}
        \rho(n) &= G\sum_{j=0}^{\infty}z(j)z(n+j)
        \leq  G C_1\sum_{j=0}^{\infty}z(j)\frac{\g(n+j)}{\g(n)}\g(n) \leq  G C_1\g(n)\sum_{j=0}^{\infty}z(j),
    \end{align*}
    and the desired result holds, where $G=\mathbb{E}[\nu(0)^2]$.
\end{proof}
\begin{proof}[Proof of Theorem~\ref{thm:subsup}]
    First let us suppose that $\limsup_{n\to\infty}b(n)/\gamma(n)=:L_3\in(0,\infty)$. Then from \eqref{eq:nont},
    \[
        \limsup_{n\to\infty}\frac{\rho(n)}{\gamma(n)}\geq \l \rho(j^*) r^{j^*} L_3 >0,
    \]
    where $j^\ast$ is the integer introduced in \eqref{A1}.     Furthermore, for any fixed $\epsilon>0$ there exists an $N(\epsilon)\in\Z^+$ such that
    $b(n)<L_3 (1+\epsilon)\gamma(n)$ for all $n\geq N(\epsilon)$. Moreover, $b(n)\leq C_{\epsilon} \gamma(n)$   for all $n\geq1$, where $C_{\epsilon}=\max\{L_3(1+\epsilon),\sup_{1\leq j \leq N(\epsilon)}b(j)/\gamma(j)\}$. Therefore, from Theorem~\ref{thm:bounds} we have that there exists $C_{1,\epsilon}>0$ such that $\rho(n)\leq C_{1,\epsilon} \gamma(n)$    for all $n\geq1$. Thus,
    \[
        0<\l \rho(j^*)L_3\leq \limsup_{n\to\infty}\frac{\rho(n)}{\gamma(n)} \leq  C_{1,\epsilon}<\infty.
    \]
    Conversely, suppose now that $\limsup_{n\to\infty}\rho(n)/\gamma(n)=:L_2\in(0,\infty)$. Then from \eqref{eq:nont} we have $\limsup_{n\to\infty} b(n)/\gamma(n) \leq L_2/(\l\rho(j^*) r^{j^*})<+\infty$.

    To show that $\limsup_{n\to\infty}b(n)/\gamma(n)>0$, we suppose the contrary, namely that $\limsup_{n\to\infty} b(n)/\gamma(n)=0$.  Since $b$ and $\gamma$ are non--negative, $\lim_{n\to\infty}b(n)/\gamma(n)=0$. Then it is not difficult to see from the proof of Theorem~\ref{thm:4} that $\lim_{n\to\infty}\rho(n)/\gamma(n)=0$ and hence $\limsup_{n\to\infty}\rho(n)/\gamma(n)=0$, which contradicts  $\limsup_{n\to\infty}\rho(n)/\gamma(n)>0$. Therefore, as
$\limsup_{n\to\infty} b(n)/\gamma(n)$ must exist, we have
    $\limsup_{n\to\infty} b(n)/\gamma(n)\in(0,\infty)$.
\end{proof}
\begin{proof}[Proof of Theorem~\ref{thm:11}]
    The proof is largely established by rewriting the limits in terms of their $\epsilon-N$ definition.
    This delivers upper and lower bounds, $\gamma_-,\gamma_+$ respectively, on $b$ where $\gamma_-(n)=C_-(n+1)^{-\beta(1-\epsilon)}$
    and  $\gamma_+(n)=C_+(n+1)^{-\beta(1+\epsilon)}$ for $n\geq0$ and for some constants $C_-,C_+>0$.
    Theorems~\ref{thm:bounds},~\ref{thm:conub} and~\ref{thm:lowb} are then applied to generate the
    appropriate bounds on $\rho$, from which the result follows.

    In order to establish \eqref{eq:lnspb}, i.e.
    \[
        \limsup_{n\to\infty}\frac{\log \rho(n)}{\log n} =-\beta \text{ implies }    \limsup_{n\to\infty}\frac{\log b(n)}{\log n} =-\beta,
    \]
    one uses \eqref{eq:nont} and an argument by contradiction, not unlike that employed in the proof of Theorem~\ref{thm:subsup}.

    For the case $\beta=\infty$, the bounding function is $n^{-K}$ where $K>0$ can be chosen arbitrarily large. In all other respects  this case follows through as for other values of $\beta$.
\end{proof}
\begin{proof}[Proof of Theorem~\ref{thm:22}]
Firstly suppose $\rho(k)\leq C_2\alpha_2^k$. By definition, $b\geq0$ and hence $z\geq0$ and $\rho\geq0$.
Thus with $j^*$ as defined in \eqref{A1}, from \eqref{eq:nont} we have
\[
    b(k+1+j^*) \leq \frac{1}{\l\rho(j^*)} \rho(k+1) \leq \frac{C_2}{\l\rho(j^*)}\alpha_2^{k+1}
    = \frac{C_2}{\l\rho(j^*)\alpha_2^{j^*}} \alpha_2^{k+1+j^*}.
\]
Hence, $b(k)\leq C_3\alpha_2^k$ for all $k\geq j^*+1$ where $C_3=C_2/(\l\rho(j^*)\alpha_2^{j^*})$ and so
$b(k)\leq C_4\alpha_2^k$ for all $k\geq1$, where $C_4=\max(C_3, Q)$ and $Q=\max_{1\leq l\leq j^*}{b(l)\alpha_2^{-l}}=b(j^*)\alpha_2^{-j^*}$.

Conversely, suppose that $b(k)\leq C_1\alpha_1^k$.
As \eqref{eq:con1} holds we have $z(n)\to0$, as $n\to\infty$. Thus
we may use~\cite[Theorem~4]{ElMur:1996a} to conclude that
\begin{equation}\label{eq:resexp}
    b(k)\leq C_1\alpha_1^k \quad \text{ if and only if } \quad z(k)\leq C_4\alpha_4^k,
\end{equation}
for some $\alpha_4\in(0,1)$ and $C_1,C_4\in(0,\infty)$.
Therefore for the sequence $f$ given in \eqref{eq:cov2}, we get
\begin{align*}
    f(k) = \l\sum_{j=1}^{\infty}b(k+j+1)\rho(-j) \leq \l C_1\sum_{j=1}^{\infty}\alpha_1^{k+j+1}\rho(j)
    <\l C_1\alpha_1\alpha_1^k\sum_{j=1}^{\infty}\rho(j).
    \end{align*}
    Thus as $\rho$ is summable from Theorem~\ref{thm:lgds3}, we have
        $f(k)\leq \l C_1K\alpha_1^{k},$
for some $0<K<\infty$.
Using this estimate for $f$ and \eqref{eq:resexp} in \eqref{eq:varpar} gives
\begin{equation}
    \rho(k) \leq C_5\alpha_4^k+\sum_{j=1}^{k}C_4\alpha_4^{k-j}C_6\alpha_1^j
    = C_5\alpha_4^k+C_7\alpha_4^k\sum_{j=1}^{k}\left(\frac{\alpha_1}{\alpha_4}\right)^j.
\end{equation}
If $\alpha_1\not=\alpha_4$, with $\alpha_2=$max$(\alpha_1,\alpha_4)$ we have
    $\rho(k) \leq C_5\alpha_4^k+C_8|\alpha_4^k-\alpha_1^k|
    \leq C_5\alpha_4^k+C_8\alpha_4^k+C_8\alpha_1^k
    \leq C_9\alpha_2^k$. If $\alpha_1=\alpha_4$, then
\[
    \rho(k) \leq C_5\alpha_4^k+C_7\alpha_4^kk < C_5\alpha_4^k+C_7C_8(\alpha_4+\epsilon)^k<
    C_{10}(\alpha_4+\epsilon)^k,
\]
where $\alpha_2=\alpha_4+\epsilon$ and $\epsilon$ is chosen sufficiently small so that  $\alpha_2<1$, and
$C_8$ is given by $C_8 = \sup_{k\geq1} k/(1+\epsilon/\alpha_4)^k$.
\end{proof}

\begin{flushleft}
\textbf{Acknowledgements}
\end{flushleft}The authors are grateful to David Reynolds for his advice
and scrutiny of the article. In particular, they thank him for stimulating conversations in relation to the sufficient conditions for stationary.

\providecommand{\bysame}{\leavevmode\hbox
to3em{\hrulefill}\thinspace}
\providecommand{\MR}{\relax\ifhmode\unskip\space\fi MR }
\providecommand{\MRhref}[2]{
  \href{http://www.ams.org/mathscinet-getitem?mr=#1}{#2}
} \providecommand{\href}[2]{#2}


\end{document}